\numberwithin{equation}{section}
\theoremstyle{plain}
\newtheorem{theorem}[equation]{Theorem}
\newtheorem{corollary}[equation]{Corollary}
\newtheorem{lemma}[equation]{Lemma}
\newtheorem{proposition}[equation]{Proposition}
\theoremstyle{definition}
\newtheorem{definition}[equation]{Definition}
\newtheorem{example}[equation]{Example}
\newtheorem{remark}[equation]{Remark}
\numberwithin{equation}{section}
\newcommand{\R}{{\mathbb R}}
\newcommand{\N}{{\mathbb N}}
\newcommand{\Om}{\Omega}
\providecommand{\vint}[1]{\mathchoice
          {\mathop{\vrule width 5pt height 3 pt depth -2.5pt
                  \kern -9pt \kern 1pt\intop}\nolimits_{\kern -5pt{#1}}}
          {\mathop{\vrule width 5pt height 3 pt depth -2.6pt
                  \kern -6pt \intop}\nolimits_{\kern -3pt{#1}}}
          {\mathop{\vrule width 5pt height 3 pt depth -2.6pt
                  \kern -6pt \intop}\nolimits_{\kern -3pt{#1}}}
          {\mathop{\vrule width 5pt height 3 pt depth -2.6pt
                  \kern -6pt \intop}\nolimits_{\kern -3pt{#1}}}}
\newcommand{\eps}{\varepsilon}
\newcommand{\loc}{\mathrm{loc}}
\newcommand{\BV}{\mathrm{BV}}
\newcommand{\SBV}{\mathrm{SBV}}
\newcommand{\liploc}{\mathrm{Lip}_{\mathrm{loc}}}
\newcommand{\ch}{\text{\raise 1.3pt \hbox{$\chi$}\kern-0.2pt}}
\DeclareMathOperator{\capa}{Cap}
\DeclareMathOperator{\rcapa}{cap}
\DeclareMathOperator{\supp}{spt}
\DeclareMathOperator{\fint}{fine-int}
\begin{document}
\title{Discrete convolutions of $\BV$ functions\\
in quasiopen sets in metric spaces
\footnote{{\bf 2010 Mathematics Subject Classification}: 30L99, 31E05, 26B30.
\hfill \break {\it Keywords\,}: metric measure space, function of bounded variation,
discrete convolution, quasiopen, partition of unity, uniform approximation
}}
\author{Panu Lahti}
\maketitle

\begin{abstract}
We study fine potential theory and in particular
partitions of unity in quasiopen sets in the case $p=1$.
Using these, we develop an analog of the discrete convolution technique
in quasiopen (instead of open) sets.
We apply this technique to show that every
function of bounded variation ($\BV$ function)
can be approximated in the $\BV$ and
$L^{\infty}$ norms by
$\BV$ functions whose jump sets are of finite Hausdorff measure.
Our results seem to be new even in Euclidean spaces but we work in
a more general complete metric space that is equipped with a doubling measure and supports a
Poincar\'e inequality. 
\end{abstract}

\section{Introduction}

In Euclidean spaces, a standard and very useful
method for approximating a function of bounded variation ($\BV$ function)
by smooth functions in a weak sense
is to take convolutions
with mollifier functions.
In the setting of a more general doubling metric measure space, an analog of this method
is given by so-called discrete convolutions. These are constructed by means of
Lipschitz partitions of unity subordinate to Whitney coverings of an open set,
and they possess most of the good properties of standard convolutions.
Discrete convolutions and their properties have been considered e.g. in
\cite{HKT,KKST3,LaSh}. Whitney coverings and related partitions of unity were
originally developed in \cite{CoWe,MaSe,Wh}.

In open sets, it is of course easy to pick Lipschitz cutoff functions that
are then used in constructing a partition of unity. On the other hand, being limited to open sets
is also a drawback of (discrete) convolutions; sometimes one may wish
to smooth out a function in a finer way. In potential theory, one sometimes works
with the concept of \emph{quasiopen} sets.
For nonlinear potential theory and its history in the Euclidean setting,
in the case $1<p<\infty$, see especially the monographs \cite{AH,HKM,MZ}.
Nonlinear fine potential theory in metric spaces has been
studied in several papers in recent years, see \cite{BBL-SS,BBL-CCK,BBL-WC}.
The typical assumptions on a metric space, which we make also in the current paper, 
are that the space is complete, equipped with a doubling measure, and supports
a Poincar\'e inequality; see Section \ref{sec:preliminaries} for definitions.

Much less is known (even in Euclidean spaces) in the case $p=1$,
but certain results of fine potential theory when $p=1$
have been developed by the author in metric spaces
in \cite{L-Fed,L-NC,L-FC}.
In quasiopen sets, the role of Lipschitz cutoff functions 
needs to be taken by Sobolev functions (often called Newton-Sobolev functions in
metric spaces). A theory of Newton-Sobolev cutoff
functions in quasiopen sets when $p=1$ was developed in \cite{L-NC},
analogously to the case $1<p<\infty$ studied previously in \cite{BBL-SS}.
In the current paper we apply this theory
to construct partitions of unity in quasiopen sets, and then we develop an analog
of the discrete convolution technique in such sets.
This is given in Theorem
\ref{thm:discrete conv in quasiopen set} and is, as far
as we know, new even in Euclidean spaces.

As an application, we prove a new approximation result for $\BV$
functions. The \emph{jump set} of a $\BV$ function is always $\sigma$-finite, but
not necessarily finite, with respect to the codimension one (in the Euclidean setting,
$n-1$-dimensional) Hausdorff measure. On the other hand, in the study of minimization problems one often considers subclasses of $\BV$ functions for which the jump
set is of finite Hausdorff measure. Approximation results for this kind of $\BV$
 functions by means of piecewise smooth functions were studied recently in \cite{dPFP}.
 In the current paper, we prove that it is possible
 to approximate an arbitrary $\BV$ function by $\BV$ functions whose jump sets are
 of finite Hausdorff measure, in the following sense.
 
\begin{theorem}\label{thm:main approximation theorem}
Let $\Om\subset X$ be an open set and let $u\in\BV(\Om)$. Then
there exists a sequence $(u_i)\subset \BV(\Om)$ such that
$\Vert u_i- u\Vert_{\BV(\Om)}+\Vert u_i- u\Vert_{L^{\infty}(\Om)}\to 0$,
and $\mathcal H(S_{u_i})<\infty$
for each $i\in\N$.
\end{theorem}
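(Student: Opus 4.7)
The plan is to leave $u$ unchanged on the ``large-jump'' part of its jump set and smooth it out on a quasiopen neighborhood of the ``small-jump'' part by applying Theorem~\ref{thm:discrete conv in quasiopen set}. For $\eta>0$ set
$$J_\eta:=\{x\in S_u:|u^+(x)-u^-(x)|>\eta\}.$$
In the metric-measure setting the jump part $|Du|^j$ of the variation measure is bounded below by a uniform multiple of $|u^+-u^-|\,\mathcal{H}\lfloor S_u$, so Chebyshev's inequality gives $\mathcal{H}(J_\eta)<\infty$ for every $\eta>0$, while dominated convergence gives $|Du|^j(S_u\setminus J_\eta)\to 0$ as $\eta\to 0$.

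Next, using the outer regularity of $|Du|$ with respect to quasiopen sets, as developed in the fine potential theory of \cite{L-NC,L-FC}, I would pick, for each $i\in\N$, a quasiopen set $U_i\subset\Om$ with $S_u\setminus J_{1/i}\subset U_i$ and $|Du|(U_i)<1/i$, arranged further so that each point of $U_i$ lies within distance $1/i$ of $\Om\setminus U_i$ (``thinness''). Theorem~\ref{thm:discrete conv in quasiopen set} then produces a discrete convolution $v_i$ of $u$ on $U_i$---finely continuous on $U_i$, inheriting the $L^\infty$-bound of $u$ on a bounded enlargement $U_i^{*}$, and satisfying the standard discrete-convolution estimates $|Dv_i|(U_i)\le C|Du|(U_i^{*})=O(1/i)$ and $\|v_i-u\|_{L^1(U_i)}=O(1/i^2)$ thanks to the bound on the Whitney radii.

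Set
$$u_i:=\begin{cases} v_i & \text{on } U_i,\\ u & \text{on } \Om\setminus U_i.\end{cases}$$
Then I would verify $u_i\in\BV(\Om)$ and $\|u_i-u\|_{\BV(\Om)}=O(1/i)$: the new variation on $U_i$ is $|Dv_i|(U_i)$, the variation of $u$ removed on $U_i$ is $|Du|(U_i)$, and any gluing contribution along the fine boundary of $U_i$ is controlled via the $L^1$-closeness of $v_i$ and $u$. For the jump set, $S_{u_i}\cap(\Om\setminus U_i)\subset J_{1/i}$ has finite $\mathcal H$-measure; $v_i$ is finely continuous inside $U_i$; and possible new jumps along the fine boundary of $U_i$ are absorbed by the closeness of $v_i$ and $u$, so that their fine traces agree $\mathcal H$-almost everywhere. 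Hence $\mathcal{H}(S_{u_i})<\infty$.

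The delicate point is the uniform bound $\|v_i-u\|_{L^\infty(U_i)}\to 0$, since discrete-convolution averages converge to $u$ in general only pointwise at Lebesgue points. My plan is to exploit three constraints on $U_i$ simultaneously---Whitney-ball radii below $1/i$, jump heights of $u$ inside $U_i$ below $1/i$, and $|Du|(U_i)<1/i$---and to combine these with a quantitative Poincar\'e oscillation estimate together with the fine continuity of the quasicontinuous representative of $u$. Controlling the exceptional set where these pointwise estimates fail, and showing it is $\mathcal H$-negligible (or at least captured by the quasiopen set $U_i$ itself), is the main technical obstacle and the reason why the whole argument has to be carried out in the quasiopen rather than open setting.
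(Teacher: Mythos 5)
Your overall strategy matches the paper's: split the jump set by jump height $\delta$, cover the small-jump part $\{0<u^{\vee}-u^{\wedge}<\delta\}$ by a $1$-quasiopen set $U$ where all jumps are $<\delta$, run the quasiopen discrete convolution there, glue with $u$ outside, and send $\delta\to 0$. This is exactly what the paper does via Theorem~\ref{thm:approximation}.

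However, there is a genuine gap in your account of the $L^{\infty}$ bound, which you correctly identify as the heart of the matter but then handle incorrectly. You plan to deduce $\Vert v_i-u\Vert_{L^{\infty}(U_i)}\to 0$ from ``Whitney-ball radii below $1/i$'' together with a ``quantitative Poincar\'e oscillation estimate.'' This reflects a misreading of Theorem~\ref{thm:discrete conv in quasiopen set}: the quasiopen discrete convolution is \emph{not} built from Whitney balls and integral averages $u_{B_j}$ (see Remark~\ref{rmk:discrete convolutions}, which explicitly rules this out because the Poincar\'e inequality does not localize well on quasiopen sets). Instead, it is built from a $1$-finely locally finite partition of unity and from Proposition~\ref{prop:approximation with L infinity bound}, whose output on each patch satisfies $\sup|u_j-u^{\vee}|\le 9\sup(u^{\vee}-u^{\wedge})+\eps$. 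The uniform bound you are looking for is therefore already contained in conclusion~\eqref{eq:L infinity bound} of Theorem~\ref{thm:discrete conv in quasiopen set}: by construction $U_i\subset\{u^{\vee}-u^{\wedge}<\delta_i\}$, so $\sup_{U_i}|v_i-u^{\vee}|\le 9\delta_i+\eps_i$, and combining with Lebesgue differentiation gives $\Vert v_i-u\Vert_{L^{\infty}(U_i)}\le 10\delta_i$ outright. There is no Poincar\'e argument, no bound on Whitney radii, and no exceptional set to control --- the entire point of Proposition~\ref{prop:approximation with L infinity bound} and Theorem~\ref{thm:discrete conv in quasiopen set} is to give a pointwise oscillation bound proportional to the local jump height. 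Relatedly, your asserted estimates $\Vert Dv_i\Vert(U_i)\le C\Vert Du\Vert(U_i^{*})$ on an enlarged $U_i^{*}$ are not what the theorem provides (it gives the sharper $\int_{U_i}g_{v_i}\,d\mu<\Vert Du\Vert(U_i)+\eps$ with no enlargement).

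Your reason ``why quasiopen is needed'' is also not quite right: it is not about absorbing an exceptional set into $U_i$, but about the fact that $\{u^{\vee}-u^{\wedge}<\delta\}$ is $1$-quasiopen yet generally not open (cf.\ Example~\ref{ex:dense jump set}), so that an open $U$ would inevitably contain large jumps, making the $L^{\infty}$ bound impossible. Once you replace the final paragraph by a direct invocation of~\eqref{eq:L infinity bound}, and invoke the conclusions $\Vert D(v-u)\Vert(X\setminus U)=0$ and $|v-u|^{\vee}=0$ $\mathcal H$-a.e.\ in $X\setminus U$ of Theorem~\ref{thm:discrete conv in quasiopen set} (together with Lemma~\ref{lem:measure theoretic interior of a quasiopen set}) for the gluing and the jump-set identification, your sketch becomes essentially the paper's proof.
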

 
 This is given (with more details)
 in Theorem \ref{thm:approximation}.
 Note that here the approximation is not only in the usual weak sense but
 in the $\BV$ \emph{norm}. Yet the most subtle problem seems to be
 to obtain approximation simultaneously in the $L^{\infty}$ norm;
 for this the usual (discrete) convolution method seems too crude,
 demonstrating the need for the ``quasiopen version''.

\section{Definitions and assumptions}\label{sec:preliminaries}

In this section we present the notation, definitions,
and assumptions used in the paper.

Throughout the paper, $(X,d,\mu)$ is a complete metric space that is equip\-ped
with a metric $d$ and a Borel regular outer measure $\mu$ satisfying
a doubling property, meaning that
there exists a constant $C_d\ge 1$ such that
\[
0<\mu(B(x,2r))\le C_d\mu(B(x,r))<\infty
\]
for every ball $B(x,r):=\{y\in X:\,d(y,x)<r\}$.
When we want to state that a constant $C$
depends on the parameters $a,b, \ldots$, we write $C=C(a,b,\ldots)$.
When a property holds outside a set of $\mu$-measure zero, we say that it holds
almost everywhere, abbreviated a.e.

All functions defined on $X$ or its subsets will take values in $[-\infty,\infty]$.
As a complete metric space equipped with a doubling measure, $X$ is proper,
that is, every closed and bounded set is compact.
Given a $\mu$-measurable set $A\subset X$, we define $L^1_{\loc}(A)$ as the class
of functions $u$ on $A$
such that for every $x\in A$ there exists $r>0$ such that $u\in L^1(A\cap B(x,r))$.
Other local spaces of functions are defined similarly.
For an open set $\Omega\subset X$,
a function is in the class $L^1_{\loc}(\Omega)$ if and only if it is in $L^1(\Om')$ for
every open $\Omega'\Subset\Omega$.
Here $\Omega'\Subset\Omega$ means that $\overline{\Omega'}$ is a
compact subset of $\Omega$.

For any set $A\subset X$ and $0<R<\infty$, the restricted Hausdorff content
of codimension one is defined by
\[
\mathcal{H}_{R}(A):=\inf\left\{ \sum_{i\in I}
\frac{\mu(B(x_{i},r_{i}))}{r_{i}}:\,A\subset\bigcup_{i\in I}B(x_{i},r_{i}),\,r_{i}\le R\right\},
\]
where $I\subset \N$ is a finite or countable index set.
The codimension one Hausdorff measure of $A\subset X$ is then defined by
\[
\mathcal{H}(A):=\lim_{R\rightarrow 0}\mathcal{H}_{R}(A).
\]

By a curve we mean a rectifiable continuous mapping from a compact interval of the real line into $X$.
A nonnegative Borel function $g$ on $X$ is an upper gradient 
of a function $u$
on $X$ if for all nonconstant curves $\gamma$, we have
\begin{equation}\label{eq:definition of upper gradient}
|u(x)-u(y)|\le \int_{\gamma} g \,ds,
\end{equation}
where $x$ and $y$ are the end points of $\gamma$ and the curve integral
is defined by means of an arc-length parametrization, see \cite[Section 2]{HK}
where upper gradients were originally introduced.
We interpret $|u(x)-u(y)|=\infty$ whenever  
at least one of $|u(x)|$, $|u(y)|$ is infinite.

We say that a family of curves $\Gamma$ is of zero $1$-modulus if there is a 
nonnegative Borel function $\rho\in L^1(X)$ such that 
for all curves $\gamma\in\Gamma$, the curve integral $\int_\gamma \rho\,ds$ is infinite.
A property is said to hold for $1$-almost every curve
if it fails only for a curve family with zero $1$-modulus. 
If $g$ is a nonnegative $\mu$-measurable function on $X$
and (\ref{eq:definition of upper gradient}) holds for $1$-almost every curve,
we say that $g$ is a $1$-weak upper gradient of $u$. 
By only considering curves $\gamma$ in a set $A\subset X$,
we can talk about a function $g$ being a ($1$-weak) upper gradient of $u$ in $A$.

For a $\mu$-measurable set $H\subset X$, we define
\[
\Vert u\Vert_{N^{1,1}(H)}:=\Vert u\Vert_{L^1(H)}+\inf \Vert g\Vert_{L^1(H)},
\]
where the infimum is taken over all $1$-weak upper gradients $g$ of $u$ in $H$.
The substitute for the Sobolev space $W^{1,1}$ in the metric setting is the Newton-Sobolev space
\[
N^{1,1}(H):=\{u:\,\|u\|_{N^{1,1}(H)}<\infty\},
\]
which was first introduced in \cite{S}.
We also define the Dirichlet space $D^1(H)$ consisting of $\mu$-measurable
functions $u$ on $H$ with an upper gradient $g\in L^1(H)$ in $H$.
Both spaces are clearly vector spaces and
by \cite[Corollary 1.20]{BB} (or its proof) we know that each is also a lattice,
so that
\begin{equation}\label{eq:Newton Sobolev functions form lattice}
\textrm{if }u,v\in D^1(X),
\textrm{ then }\min\{u,v\},\,\max\{u,v\}\in  D^1(X).
\end{equation}
For any $H\subset X$, the space of Newton-Sobolev functions with zero boundary values is defined as
\[
N_0^{1,1}(H):=\{u|_{H}:\,u\in N^{1,1}(X)\textrm{ and }u=0\textrm { on }X\setminus H\}.
\]
This space is a subspace of $N^{1,1}(H)$ when $H$ is $\mu$-measurable, and it can always
be understood to be a subspace of $N^{1,1}(X)$.
The class $N_c^{1,1}(H)$ consists of those functions $u\in N^{1,1}(X)$ that have
compact support in $H$, i.e. $\supp u\subset H$.

Note that we understand Newton-Sobolev functions to be defined at every $x\in H$
(even though $\Vert \cdot\Vert_{N^{1,1}(H)}$ is then only a seminorm).
It is known that for any $u\in N_{\loc}^{1,1}(H)$ there exists a minimal $1$-weak
upper gradient of $u$ in $H$, always denoted by $g_{u}$, satisfying $g_{u}\le g$ 
a.e. in $H$, for any $1$-weak upper gradient $g\in L_{\loc}^{1}(H)$
of $u$ in $H$, see \cite[Theorem 2.25]{BB}. Sometimes we also
use the notation $g_{u,H}$ to specify that we mean the minimal $1$-weak
upper gradient of $u$ in $H$, even though $u$ may be defined in a larger set.

We will assume throughout the paper that $X$ supports a $(1,1)$-Poincar\'e inequality,
meaning that there exist constants $C_P>0$ and $\lambda \ge 1$ such that for every
ball $B(x,r)$, every $u\in L^1_{\loc}(X)$,
and every upper gradient $g$ of $u$,
we have
\begin{equation}\label{eq:poincare}
\vint{B(x,r)}|u-u_{B(x,r)}|\, d\mu 
\le C_P r\vint{B(x,\lambda r)}g\,d\mu,
\end{equation}
where 
\[
u_{B(x,r)}:=\vint{B(x,r)}u\,d\mu :=\frac 1{\mu(B(x,r))}\int_{B(x,r)}u\,d\mu.
\]

The $1$-capacity of a set $A\subset X$ is defined by
\[
\capa_1(A):=\inf \Vert u\Vert_{N^{1,1}(X)},
\]
where the infimum is taken over all functions $u\in N^{1,1}(X)$ such that $u\ge 1$ in $A$.
If a property holds outside a set
$A\subset X$ with $\capa_1(A)=0$, we say that it holds $1$-quasieverywhere, or $1$-q.e.
We know that for any $\mu$-measurable set $H\subset X$,
\begin{equation}\label{eq:quasieverywhere equivalence classes}
u=0\ \textrm{ 1-q.e. in }H\textrm{ implies }\ \Vert u\Vert_{N^{1,1}(H)}=0,
\end{equation}
see \cite[Proposition 1.61]{BB}.

The variational $1$-capacity of a set $A\subset H$ with respect to a set $H\subset X$
is defined by
\[
\rcapa_1(A,H):=\inf \int_X g_u\,d\mu,
\]
where the infimum is taken over functions $u\in N^{1,1}_0(H)$ such that
$u\ge 1$ in $A$, and where $g_u$ is the minimal $1$-weak upper gradient of $u$ (in $X$).
By truncation, we can alternatively require that $u= 1$ in $A$.
For basic properties satisfied by capacities, such as monotonicity and countable subadditivity, see e.g. \cite{BB}.
By \cite[Theorem 4.3, Theorem 5.1]{HaKi} we know that for $A\subset X$,
\begin{equation}\label{eq:null sets of Hausdorff measure and capacity}
\capa_{1}(A)=0\quad
\textrm{if and only if}\quad\mathcal H(A)=0.
\end{equation}

We say that a set $U\subset X$ is $1$-quasiopen
if for every $\eps>0$ there is an
open set $G\subset X$ such that $\capa_1(G)<\eps$ and $U\cup G$ is open.
Given a set $H\subset X$, we say that a function $u$
is $1$-quasi (lower/upper semi-)continuous on $H$ if
for every $\eps>0$ there is an open set $G\subset X$ such that $\capa_1(G)<\eps$
and $u|_{H\setminus G}$ is finite and (lower/upper semi-)continuous.
If $H=X$, we do not mention it.
It is a well-known fact that Newton-Sobolev functions are $1$-quasicontinuous
on open sets,
see \cite[Theorem 1.1]{BBS} or \cite[Theorem 5.29]{BB}.
In fact, by \cite[Theorem 1.3]{BBM} we know  that more generally
\begin{equation}\label{eq:quasicontinuous on quasiopen set}
\textrm{for a 1-quasiopen }U\subset X,\textrm{ any }u\in N^{1,1}_{\loc}(U)\textrm{ is 1-quasicontinuous on }U.
\end{equation}
By \cite[Proposition 5.23]{BB} we also know that for a $1$-quasiopen
$U\subset X$ and functions $u,v$ that are $1$-quasicontinuous on $U$,
\begin{equation}\label{eq:ae to qe}
\textrm{if }u=v\textrm{ a.e. in }U,\textrm{ then }u=v\textrm{ 1-q.e. in }U.
\end{equation}
More precisely, this result is given with respect to a version of $\capa_1$ defined by
considering $U$ as the metric space, but \cite[Proposition 4.2]{BBM}
and \cite[Remark 3.5]{S2} guarantee that this does not make a difference.

Next we present the definition and basic properties of functions
of bounded variation on metric spaces, following \cite{M}. See also e.g.
the monographs \cite{AFP, EvaG92, Fed, Giu84, Zie89} for the classical 
theory in the Euclidean setting.
Given an open set $\Om\subset X$ and a function $u\in L^1_{\loc}(\Om)$,
we define the total variation of $u$ in $\Om$ by
\begin{equation}\label{eq:definition of Du in open set}
\|Du\|(\Om):=\inf\left\{\liminf_{i\to\infty}\int_\Om g_{u_i}\,d\mu:\, u_i\in N^{1,1}_{\loc}(\Om),\, u_i\to u\textrm{ in } L^1_{\loc}(\Om)\right\},
\end{equation}
where each $g_{u_i}$ is the minimal $1$-weak upper gradient of $u_i$
in $\Om$.
(In \cite{M}, local Lipschitz constants were used in place of upper gradients, but the theory
can be developed similarly with either definition.)
We say that a function $u\in L^1(\Om)$ is of bounded variation, 
and denote $u\in\BV(\Om)$, if $\|Du\|(\Om)<\infty$.
For an arbitrary set $A\subset X$, we define
\begin{equation}\label{eq:definition of Du in arbitrary set}
\|Du\|(A):=\inf\{\|Du\|(W):\, A\subset W,\,W\subset X
\text{ is open}\}.
\end{equation}

Note that if we defined $\Vert Du\Vert(A)$ simply by replacing $\Om$
with $A$ in \eqref{eq:definition of Du in open set}, we would get a different
quantity compared with the definition given in
\eqref{eq:definition of Du in arbitrary set}.
However, in a $1$-quasiopen set $U$ these give the same result;
we understand the expression $\Vert Du\Vert(U)<\infty$ to mean that
there exists some open set $\Om\supset U$
such that
$u\in L^1_{\loc}(\Om)$ and $\Vert Du\Vert(\Om)<\infty$.

\begin{theorem}[{\cite[Theorem 4.3]{L-LSC}}]\label{thm:characterization of total variational}
	Let $U\subset X$ be $1$-quasiopen. If $\Vert Du\Vert(U)<\infty$, then
	\[
	\Vert Du\Vert(U)=\inf \left\{\liminf_{i\to\infty}\int_{U}g_{u_i}\,d\mu,\,
	u_i\in N_{\loc}^{1,1}(U),\, u_i\to u\textrm{ in }L^1_{\loc}(U)\right\},
	\]
	where each $g_{u_i}$ is the minimal $1$-weak upper gradient of $u_i$ in $U$.
\end{theorem}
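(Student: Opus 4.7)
The plan is to establish the two inequalities separately; denote the right-hand side by $V$.

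\textbf{Easy direction $V \leq \Vert Du\Vert(U)$.} For any open $W \supset U$ and any approximating sequence $v_i \in N^{1,1}_{\loc}(W)$ with $v_i \to u$ in $L^1_{\loc}(W)$ coming from the definition of $\Vert Du\Vert(W)$, the restrictions $v_i|_U$ lie in $N^{1,1}_{\loc}(U)$ and converge to $u$ in $L^1_{\loc}(U)$. Since any $1$-weak upper gradient of $v_i$ in $W$ restricts to a $1$-weak upper gradient of $v_i$ in $U$, minimality gives $g_{v_i,U}\le g_{v_i,W}|_U$ $\mu$-a.e.\ on $U$. Hence $V\le\liminf_i\int_W g_{v_i,W}\,d\mu$, and taking the infimum over such sequences and over $W$ yields $V\le\Vert Du\Vert(U)$.

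\textbf{Hard direction $\Vert Du\Vert(U)\le V$.} Take $(u_i)\subset N^{1,1}_{\loc}(U)$ with $u_i\to u$ in $L^1_{\loc}(U)$ and $\liminf_i\int_U g_{u_i}\,d\mu=:M<\infty$; the goal is $\Vert Du\Vert(U)\le M$. Fix $\varepsilon>0$. Since $\Vert Du\Vert(U)<\infty$ there is an open $\Omega\supset U$ with $u\in L^1_\loc(\Omega)$ and $\Vert Du\Vert(\Omega)<\infty$. By quasi-openness pick an open $G\subset\Omega$ with $\capa_1(G)<\varepsilon$ and $W:=U\cup G$ open, and a Newton-Sobolev cutoff $\eta\in N^{1,1}(X)$ with $0\le\eta\le 1$, $\eta=1$ on $G$, and $\Vert\eta\Vert_{N^{1,1}(X)}<2\varepsilon$. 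Let $v_j\in N^{1,1}_{\loc}(\Omega)$ approximate $u$ in $L^1_{\loc}(\Omega)$ with $\int_\Omega g_{v_j,\Omega}\,d\mu\to\Vert Du\Vert(\Omega)$, and define on $W$
\[
w_{i,j}:=\eta\,v_j+(1-\eta)\,u_i,
\]
with the second summand interpreted as $0$ on $G\setminus U$ since $\eta=1$ there. The Leibniz-type upper gradient estimate yields
\[
g_{w_{i,j},W}\le(1-\eta)g_{u_i,U}\chi_U+\eta\,g_{v_j,\Omega}+|v_j-u_i|\,g_\eta
\]
$\mu$-a.e. in $W$. A diagonal choice $j=j(i)$ arranges $w_{i,j(i)}\to u$ in $L^1_{\loc}(W)$, and after integrating and controlling each term as described below, one obtains $\Vert Du\Vert(W)\le M+O(\varepsilon)$. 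Since $\Vert Du\Vert(U)\le\Vert Du\Vert(W)$, letting $\varepsilon\to 0$ yields the claim.

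\textbf{Main obstacle.} The key difficulty is the coupling term $|v_j-u_i|\,g_\eta$: while $\Vert g_\eta\Vert_{L^1(X)}<2\varepsilon$, $|v_j-u_i|$ lies only in $L^1_{\loc}$. The standard remedy is to truncate $u,u_i,v_j$ to $[-T,T]$, which by the lattice property \eqref{eq:Newton Sobolev functions form lattice} does not increase the upper gradient integrals, run the gluing argument at each level $T$, and then pass $T\to\infty$ via lower semicontinuity of $\Vert Du\Vert$ on open sets. A secondary obstacle is controlling $\int_G g_{v_j,\Omega}\,d\mu$, which requires shrinking $G$ so that $\Vert Du\Vert(G)$ is small -- possible since $\Vert Du\Vert$ does not charge $\capa_1$-null sets in view of \eqref{eq:null sets of Hausdorff measure and capacity} -- and then extracting $j=j(i)$ so that $\int_G g_{v_j}\,d\mu$ tracks $\Vert Du\Vert(G)$ in the limit.
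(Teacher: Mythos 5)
This theorem is imported verbatim from \cite[Theorem 4.3]{L-LSC}; the present paper does not supply a proof of it, so there is no internal argument to compare against, and your proposal must be judged on its own. Your easy direction is fine, and the overall gluing strategy for the hard direction is a sensible one. However, there is a genuine gap at the pivotal point you flag as a "secondary obstacle." The claim that one can "extract $j=j(i)$ so that $\int_G g_{v_j}\,d\mu$ tracks $\Vert Du\Vert(G)$" does not follow from $\int_\Omega g_{v_j}\,d\mu\to\Vert Du\Vert(\Omega)$: optimizing sequences can concentrate gradient mass on arbitrarily small sets, and the best general statement available is $\liminf_j\int_{G} g_{v_j}\,d\mu\ge\Vert Du\Vert(G)$, in the \emph{wrong} direction. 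Worse, the term you actually need to control after integrating your Leibniz estimate is $\int_W\eta\,g_{v_j}\,d\mu$, which dominates $\int_G g_{v_j}\,d\mu$ since $\eta$ is positive on a set strictly larger than $G$; making $\Vert Du\Vert(G)$ small does not touch the transition region $\{0<\eta<1\}$.

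The repair uses exactly the machinery the paper records for this purpose: take the $v_j$ to be an optimal sequence on the \emph{open} set $W$ (so that $\int_W g_{v_j}\,d\mu\to\Vert Du\Vert(W)$, via Lemma~\ref{lem:L1 loc and L1 convergence}), observe that $\eta\in N^{1,1}(X)$ is $1$-quasicontinuous and hence $1$-quasi upper semicontinuous and bounded, and invoke Proposition~\ref{prop:weak convergence with quasisemicontinuous test function} to get $\limsup_j\int_W\eta\,g_{v_j}\,d\mu\le\int_W\eta\,d\Vert Du\Vert\le\Vert Du\Vert(\{\eta>0\})$. Choosing the cutoff so that $\{\eta>0\}$ (not merely $G$) has small capacity, the right-hand side is then small by Lemma~\ref{lem:variation measure and capacity} -- which, incidentally, is the correct reference here, not~\eqref{eq:null sets of Hausdorff measure and capacity} as you cite, since $G$ has small but nonzero capacity. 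A further point you leave implicit is that the glued function $w_{i,j}$, with $(1-\eta)u_i$ extended by zero across $G\setminus U$, actually lies in $N^{1,1}_{\loc}(W)$; this is a zero-extension claim in the spirit of Lemma~\ref{lem:zero extension of Sobolev function} and needs an explicit argument, since $u_i$ is a priori only in $N^{1,1}_{\loc}$ of the quasiopen set $U$ and $\partial U\cap W$ is exactly the region where the extension must be justified.
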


Note that $1$-quasiopen sets are $\mu$-measurable
by \cite[Lemma 9.3]{BB-OD}. We also have the following lower semicontinuity.

\begin{theorem}[{\cite[Theorem 4.5]{L-LSC}}]\label{thm:lower semic in quasiopen sets}
	Let $U\subset X$ be a $1$-quasiopen set.
	If $\Vert Du\Vert(U)<\infty$ and $u_i\to u$ in $L^1_{\loc}(U)$, then
	\[
	\Vert Du\Vert(U)\le \liminf_{i\to\infty}\Vert Du_i\Vert(U).
	\]
\end{theorem}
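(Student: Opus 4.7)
The plan is to combine the characterization of the total variation on quasiopen sets given in Theorem~\ref{thm:characterization of total variational} with a standard diagonal argument.

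First, I would reduce to the case $L:=\liminf_{i\to\infty}\Vert Du_i\Vert(U)<\infty$, since otherwise there is nothing to prove. Passing to a subsequence, I may assume $\Vert Du_i\Vert(U)\to L$ and $\Vert Du_i\Vert(U)<\infty$ for every $i$, so that Theorem~\ref{thm:characterization of total variational} applies to each $u_i$ and provides a sequence $(v_{i,j})_{j\in\N}\subset N^{1,1}_{\loc}(U)$ with $v_{i,j}\to u_i$ in $L^1_{\loc}(U)$ and $\int_U g_{v_{i,j},U}\,d\mu\to \Vert Du_i\Vert(U)$ as $j\to\infty$.

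Next, I would use separability of $X$ (which holds because proper metric spaces are separable) to fix a countable family of balls $\{B_k=B(x_k,r_k)\}_{k\in\N}$ with $x_k\in U$ covering $U$, through which $L^1_{\loc}(U)$ convergence can be tested by $L^1(U\cap B_k)$ convergence for every $k$. A diagonal choice of indices $j(i)$ then produces $w_i:=v_{i,j(i)}$ satisfying
\[
\int_U g_{w_i,U}\,d\mu\le \Vert Du_i\Vert(U)+\tfrac{1}{i},
\]
together with the triangle-inequality bound $\Vert w_i-u\Vert_{L^1(U\cap B_k)}\le \Vert v_{i,j(i)}-u_i\Vert_{L^1(U\cap B_k)}+\Vert u_i-u\Vert_{L^1(U\cap B_k)}\to 0$ as $i\to\infty$ for each fixed $k$, which forces $w_i\to u$ in $L^1_{\loc}(U)$.

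Finally, applying Theorem~\ref{thm:characterization of total variational} to $u$ with $(w_i)$ as a competing sequence in the infimum yields
\[
\Vert Du\Vert(U)\le \liminf_{i\to\infty}\int_U g_{w_i,U}\,d\mu\le \liminf_{i\to\infty}\Vert Du_i\Vert(U),
\]
which is the desired inequality. The most delicate step is the diagonal bookkeeping: because $U$ is only quasiopen, local integrability must be tested on balls $U\cap B(x,r)$ with $r$ possibly depending on $x$, and the radii needed for the approximating sequences $(v_{i,j})_j$ may also depend on $i$. One therefore has to arrange the countable testing family carefully before diagonalizing. Once this bookkeeping is in place, lower semicontinuity is a formal consequence of the characterization theorem.
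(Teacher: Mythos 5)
The paper does not prove this statement itself but cites it from [L-LSC, Theorem~4.5], so there is no in-paper argument to compare against. Your overall strategy---diagonalize the approximating sequences coming out of the characterization of the total variation and feed the diagonal back into that characterization for $u$---is the natural one, and the first and last steps are correct. However, the diagonal step has a genuine gap, which you flag at the end but do not resolve. Theorem~\ref{thm:characterization of total variational} only furnishes sequences $v_{i,j}\in N^{1,1}_{\loc}(U)$ with $v_{i,j}\to u_i$ in $L^1_{\loc}(U)$, and for a quasiopen $U$ this convergence is tested on balls $U\cap B(x,r)$ whose radius $r$ depends not only on $x$ but on the sequence in question. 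There is no single countable family $\{B_k\}$ through which all $L^1_{\loc}(U)$-convergence can be tested: a family adequate for $u_i\to u$ need not be adequate for $v_{i,j}\to u_i$ (the radii for the latter may shrink with $i$). Consequently the triangle-inequality bound you write, $\Vert w_i-u\Vert_{L^1(U\cap B_k)}\le \Vert v_{i,j(i)}-u_i\Vert_{L^1(U\cap B_k)}+\Vert u_i-u\Vert_{L^1(U\cap B_k)}$, does not give convergence: the first term on the right is not controlled on a ball $B_k$ that is not in the testing family for the $j$-sequence of $u_i$, and $v_{i,j(i)}-u_i$ may not even be in $L^1(U\cap B_k)$.

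The repair is to use Lemma~\ref{lem:optimal sequence in quasiopen sets} in place of Theorem~\ref{thm:characterization of total variational} when building the approximating sequences. Since you have reduced to $\Vert Du_i\Vert(U)<\infty$, Lemma~\ref{lem:optimal sequence in quasiopen sets} gives $v_{i,j}\in\liploc(U)\subset N^{1,1}_{\loc}(U)$ with the \emph{global} convergence $v_{i,j}-u_i\to 0$ in $L^1(U)$ (not merely $L^1_{\loc}$) and $\int_U g_{v_{i,j},U}\,d\mu\to\Vert Du_i\Vert(U)$. Choosing $j(i)$ so that $\Vert v_{i,j(i)}-u_i\Vert_{L^1(U)}<1/i$ and $\int_U g_{v_{i,j(i)},U}\,d\mu<\Vert Du_i\Vert(U)+1/i$, the diagonal $w_i:=v_{i,j(i)}$ satisfies $w_i\to u$ in $L^1_{\loc}(U)$ because $w_i-u=(w_i-u_i)+(u_i-u)$, with the first summand tending to $0$ in $L^1(U)$ and the second tending to $0$ in $L^1_{\loc}(U)$; only the testing family for $(u_i)$ is then needed. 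The final application of Theorem~\ref{thm:characterization of total variational} to $u$ goes through exactly as you wrote.
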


If $u\in L^1_{\loc}(\Om)$ and $\Vert Du\Vert(\Omega)<\infty$,
then $\|Du\|$ is
a Radon measure on $\Omega$ by \cite[Theorem 3.4]{M}, and we call it the variation
measure.
The $\BV$ norm is defined by
\[
\Vert u\Vert_{\BV(\Om)}:=\Vert u\Vert_{L^1(\Om)}+\Vert Du\Vert(\Om).
\]
A $\mu$-measurable set $E\subset X$ is said to be of finite perimeter if $\|D\ch_E\|(X)<\infty$, where $\ch_E$ is the characteristic function of $E$.
The measure-theoretic interior of a set $E\subset X$ is defined by
\begin{equation}\label{eq:measure theoretic interior}
I_E:=
\left\{x\in X:\,\lim_{r\to 0}\frac{\mu(B(x,r)\setminus E)}{\mu(B(x,r))}=0\right\},
\end{equation}
and the measure-theoretic exterior by
\[
O_E:=
\left\{x\in X:\,\lim_{r\to 0}\frac{\mu(B(x,r)\cap E)}{\mu(B(x,r))}=0\right\}.
\]
The measure-theoretic boundary $\partial^{*}E$ is defined as the set of points
$x\in X$
at which both $E$ and its complement have strictly positive upper density, i.e.
\[
\limsup_{r\to 0}\frac{\mu(B(x,r)\cap E)}{\mu(B(x,r))}>0\quad
\textrm{and}\quad\limsup_{r\to 0}\frac{\mu(B(x,r)\setminus E)}{\mu(B(x,r))}>0.
\]
For an open set $\Omega\subset X$ and a $\mu$-measurable set $E\subset X$ with
$\Vert D\ch_E\Vert(\Om)<\infty$, we know that for any Borel set $A\subset\Omega$,
\begin{equation}\label{eq:def of theta}
\Vert D\ch_E\Vert(A)=\int_{\partial^{*}E\cap A}\theta_E\,d\mathcal H,
\end{equation}
where
$\theta_E\colon X\to [\alpha,C_d]$ with $\alpha=\alpha(C_d,C_P,\lambda)>0$, see \cite[Theorem 5.3]{A1} 
and \cite[Theorem 4.6]{AMP}.

For any $u,v\in L^1_{\loc}(\Om)$ and any $A\subset \Om$,
it is straightforward to show that
\begin{equation}\label{eq:BV functions form vector space}
\Vert D(u+v)\Vert(A)\le \Vert Du\Vert(A)+\Vert Dv\Vert(A).
\end{equation}

The lower and upper approximate limits of a function $u$ on an open set $\Om$
are defined respectively by
\[
u^{\wedge}(x):
=\sup\left\{t\in\R:\,\lim_{r\to 0}\frac{\mu(B(x,r)\cap\{u<t\})}{\mu(B(x,r))}=0\right\}
\]
and
\[
u^{\vee}(x):
=\inf\left\{t\in\R:\,\lim_{r\to 0}\frac{\mu(B(x,r)\cap\{u>t\})}{\mu(B(x,r))}=0\right\}
\]
for $x\in \Om$. Always $u^{\wedge}\le u^{\vee}$,
and the jump set of $u$ is defined by
\[
S_u:=\{u^{\wedge}<u^{\vee}\}:=\{x\in\Om:\,u^{\wedge}(x)<u^{\vee}(x)\}.
\]
Note that since we understand $u^{\wedge}$ and $u^{\vee}$ to be defined only on $\Om$,
also $S_u$ is understood to be a subset of $\Om$.
For $u\in L^1_{\loc}(\Om)$, we have $u=u^{\wedge}=u^{\vee}$ a.e. in $\Om$
by Lebesgue's differentiation theorem (see e.g. \cite[Chapter 1]{Hei}).
Unlike Newton-Sobolev functions, we understand $\BV$ functions to be
$\mu$-equivalence classes. To consider fine properties, we need to
consider the pointwise representatives $u^{\wedge}$ and $u^{\vee}$.

Recall that Newton-Sobolev functions are quasicontinuous;
$\BV$ functions have the following
quasi-semicontinuity property, which follows from \cite[Corollary 4.2]{L-SA},
which in turn is based on \cite[Theorem 1.1]{LaSh}.
The property was first proved in the Euclidean setting in
\cite[Theorem 2.5]{CDLP}.

\begin{proposition}\label{prop:quasisemicontinuity}
Let $\Om\subset X$ be open and let
$u\in L^1_{\loc}(\Om)$ with $\Vert Du\Vert(\Om)<\infty$. Then
$u^{\wedge}$ is $1$-quasi lower semicontinuous and
$u^{\vee}$ is $1$-quasi upper semicontinuous on $\Om$.
\end{proposition}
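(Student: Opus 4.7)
The plan is to transfer the $1$-quasicontinuity of Newton--Sobolev functions to the pointwise representatives $u^\wedge,u^\vee$ via approximation. Theorem \ref{thm:characterization of total variational} supplies a sequence $u_i\in N^{1,1}_{\loc}(\Om)$ with $u_i\to u$ in $L^1_{\loc}(\Om)$ and $\int_{\Om}g_{u_i}\,d\mu\to\|Du\|(\Om)$, and each $u_i$ is $1$-quasicontinuous by \eqref{eq:quasicontinuous on quasiopen set}. The task is to push this property through the limit.

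The first step I would take is to reduce to a level-set statement: for each $t\in\R$ for which $\{u>t\}$ has finite perimeter in $\Om$, both the measure-theoretic interior $I_{\{u>t\}}$ and the measure-theoretic exterior $O_{\{u>t\}}$ are $1$-quasiopen when intersected with $\Om$. Given this, the identities $\{u^\wedge>t\}=\bigcup_{s\in\Q\cap(t,\infty)}I_{\{u>s\}}$ and $\{u^\vee<t\}=\bigcup_{s\in\Q\cap(-\infty,t)}O_{\{u>s\}}$, together with the BV coarea formula (by which $\{u>s\}$ has finite perimeter for a.e.\ $s\in\R$) and countable subadditivity of $\capa_1$, reduce the proposition to the level-set statement.

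Next, for a set of finite perimeter $E\subset\Om$, I would approximate $\ch_E$ by a bounded sequence $(v_i)\subset N^{1,1}_{\loc}(\Om)$ with $v_i\to\ch_E$ in $L^1_{\loc}(\Om)$ and $\int_{\Om}g_{v_i}\,d\mu$ controlled by $\|D\ch_E\|(\Om)$. Each $v_i$ is $1$-quasicontinuous; using a weak-type $\capa_1$ maximal estimate for the oscillation of Newton--Sobolev approximants of a BV function -- the content supplied by \cite[Theorem 1.1]{LaSh} and its refinement \cite[Corollary 4.2]{L-SA} -- for any $\eps>0$ one extracts a subsequence $(v_{i_k})$ that converges uniformly on $\Om\setminus G$ for some open $G\subset X$ with $\capa_1(G)<\eps$. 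The uniform limit $v^*$ is continuous on $\Om\setminus G$, and a pointwise Lebesgue-type argument, using the defining property of $I_E$ and $O_E$, identifies $v^*=1$ on $I_E$ and $v^*=0$ on $O_E$ away from a further set of $\capa_1$-measure zero (using \eqref{eq:null sets of Hausdorff measure and capacity} and the finiteness of $\mathcal H(\partial^*E\cap\Om)$ from \eqref{eq:def of theta}). Hence $I_E\cap(\Om\setminus G)$ and $O_E\cap(\Om\setminus G)$ coincide, up to capacity-small corrections, with the relatively open sets $\{v^*>1/2\}\cap(\Om\setminus G)$ and $\{v^*<1/2\}\cap(\Om\setminus G)$, yielding the level-set claim.

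The main obstacle is the quantitative pointwise identification $v^*=\ch_{I_E}$ outside a small-capacity set: this is the non-routine input distilled from the cited references, and rests on controlling oscillations of Newton--Sobolev approximants in a capacitary rather than merely $L^1$ sense. The remainder of the argument -- the reduction via rational levels, the enlargement of the exceptional open sets by the $\capa_1$-small errors at each level, and the symmetric handling of $u^\wedge$ and $u^\vee$ -- is a routine diagonal and countable subadditivity exercise.
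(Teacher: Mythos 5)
The paper does not give a proof of this proposition; it is imported verbatim from \cite[Corollary 4.2]{L-SA}, which in turn rests on \cite[Theorem 1.1]{LaSh}. So the comparison must be between your sketch and the strategy in those references. Your level-set reduction is sound: the identities $\{u^{\wedge}>t\}=\bigcup_{s\in\Q\cap(t,\infty)}I_{\{u>s\}}$ and $\{u^{\vee}<t\}=\bigcup_{s\in\Q\cap(-\infty,t)}O_{\{u>s\}}$ are correct (using that $I_{\{u\geq s\}}$ and $I_{\{u>s\}}$ interleave over rationals), the coarea formula supplies finite-perimeter levels, and the $2^{-j}\eps$-diagonal choice of exceptional open sets via countable subadditivity of $\capa_1$ is exactly the routine glue. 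This is indeed roughly how \cite{LaSh} organizes the argument, by first handling sets of finite perimeter and then passing to $\BV$ via level sets. Two smaller omissions: you never address finiteness of $u^{\wedge}$ (needed for quasi-semicontinuity, and which follows from \eqref{eq:variation measure decomposition} together with \eqref{eq:null sets of Hausdorff measure and capacity}), and the measurability-in-level statement needs care, but both are minor.

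The genuine gap is in how you handle the key lemma that $I_E$ and $O_E$ are $1$-quasiopen for a finite-perimeter $E$. You propose taking $v_i\in N^{1,1}_{\loc}(\Om)$ with $v_i\to\ch_E$ in $L^1_{\loc}$ and bounded energy, and then ``extracting a subsequence that converges uniformly on $\Om\setminus G$ with $\capa_1(G)<\eps$.'' This does not follow from anything available: the standard result giving locally uniform convergence mod small $\capa_1$ (cf.\ \cite[Prop.~1.59]{BB} and its relatives) requires the sequence to be Cauchy in $N^{1,1}$, whereas here $\Vert D(v_i-\ch_E)\Vert$ stays bounded away from zero (it is comparable to $\Vert D\ch_E\Vert$ since $\ch_E\notin N^{1,1}_{\loc}$). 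And the attribution of the missing estimate to ``the content of \cite[Theorem~1.1]{LaSh} and \cite[Corollary~4.2]{L-SA}'' is circular: those results \emph{are} the quasi-semicontinuity statement you are trying to prove, not an intermediate oscillation lemma. What actually does the work in those references is not a uniform-convergence statement about arbitrary Sobolev approximants, but (i) a pointwise estimate for discrete convolutions of a $\BV$ function in terms of a restricted maximal function of $\Vert Du\Vert$, and (ii) a weak-type estimate of the form $\capa_1(\{x:\,M_R\Vert Du\Vert(x)>\lambda\})\lesssim\Vert Du\Vert(\Om)/\lambda$; together with the characterization of Theorem \ref{thm:finely open is quasiopen and vice versa} these show that $I_E$ and $O_E$ are $1$-finely open outside an $\mathcal H$-null set, hence $1$-quasiopen. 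So your reduction is correct, but at the crucial step your replacement for the cited machinery would fail as stated, and the patch you offer is the conclusion itself.
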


By \cite[Theorem 5.3]{AMP}, the variation measure of a $\BV$ function
can be decomposed into the absolutely continuous and singular part, and the latter
into the Cantor and jump part, as follows. Given an open set 
$\Omega\subset X$ and $u\in L^1_{\loc}(\Omega)$
with $\Vert Du\Vert(\Om)<\infty$, we have for any Borel set $A\subset \Om$
\begin{equation}\label{eq:variation measure decomposition}
\begin{split}
\Vert Du\Vert(A) &=\Vert Du\Vert^a(A)+\Vert Du\Vert^s(A)\\
&=\Vert Du\Vert^a(A)+\Vert Du\Vert^c(A)+\Vert Du\Vert^j(A)\\
&=\int_{A}a\,d\mu+\Vert Du\Vert^c(A)+\int_{A\cap S_u}\int_{u^{\wedge}(x)}^{u^{\vee}(x)}\theta_{\{u>t\}}(x)\,dt\,d\mathcal H(x),
\end{split}
\end{equation}
where $a\in L^1(\Omega)$ is the density of the absolutely continuous part
and the functions $\theta_{\{u>t\}}\in [\alpha,C_d]$ 
are as in~\eqref{eq:def of theta}.
In \cite{AMP} it is assumed that $u\in\BV(\Om)$, but the proof is the same
for the slightly more general $u$ that we consider here.

Next we define the fine topology in the case $p=1$.
\begin{definition}\label{def:1 fine topology}
We say that $A\subset X$ is $1$-thin at the point $x\in X$ if
\[
\lim_{r\to 0}r\frac{\rcapa_1(A\cap B(x,r),B(x,2r))}{\mu(B(x,r))}=0.
\]
We also say that a set $U\subset X$ is $1$-finely open if $X\setminus U$ is $1$-thin at every $x\in U$. Then we define the $1$-fine topology as the collection of $1$-finely open sets on $X$.

We denote the $1$-fine interior of a set $H\subset X$, i.e. the largest $1$-finely open set contained in $H$, by $\fint H$. We denote the $1$-fine closure of $H$, i.e. the smallest $1$-finely closed set containing $H$, by $\overline{H}^1$.

We say that a function $u$ defined on a set $U\subset X$ is $1$-finely continuous at $x\in U$ if it is continuous at $x$ when $U$ is equipped with the induced $1$-fine topology on $U$ and $[-\infty,\infty]$ is equipped
with the usual topology.
\end{definition}

See \cite[Section 4]{L-FC} for discussion on this definition, and for a proof of the fact that the $1$-fine topology is indeed a topology.
By \cite[Lemma 3.1]{L-Fed}, $1$-thinness implies zero measure density, i.e.
\begin{equation}\label{eq:thinness and measure thinness}
\textrm{if }A\textrm{ is 1-thin at }x,\ \textrm{ then }\lim_{r\to 0}\frac{\mu(A\cap B(x,r))}{\mu(B(x,r))}=0.
\end{equation}

\begin{theorem}[{\cite[Corollary 6.12]{L-CK}}]\label{thm:finely open is quasiopen and vice versa}
A set $U\subset X$ is $1$-quasiopen if and only if it is the union of a $1$-finely
open set and a $\mathcal H$-negligible set.
\end{theorem}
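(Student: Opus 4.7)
The proof naturally splits into two implications; the ``if'' direction is the easier one and I would dispatch it first. It relies on the (separately established) fact, from the author's earlier work \cite{L-FC}, that every $1$-finely open set is itself $1$-quasiopen, together with \eqref{eq:null sets of Hausdorff measure and capacity}, which lets any $\mathcal H$-null set $N$ be covered by an open set of arbitrarily small $1$-capacity. Given $U=V\cup N$ with $V$ finely open and $\mathcal H(N)=0$, for any $\eps>0$ one picks an open $G_1\supset N$ with $\capa_1(G_1)<\eps/2$ and an open $G_2$ with $V\cup G_2$ open and $\capa_1(G_2)<\eps/2$; then $U\cup(G_1\cup G_2)=(V\cup G_2)\cup G_1$ is a union of open sets and $\capa_1(G_1\cup G_2)<\eps$ by subadditivity, proving that $U$ is $1$-quasiopen.

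For the ``only if'' direction, the plan is to show that if $U$ is $1$-quasiopen then $N:=U\setminus\fint U$ satisfies $\mathcal H(N)=0$. I would first observe that $\chi_U$ is $1$-quasi lower semicontinuous: its superlevel sets $\{\chi_U>t\}$ are, for varying $t$, either $U$, $\emptyset$, or $X$, each of which is $1$-quasiopen by hypothesis. The main input is then a Kellogg-type statement for $p=1$: every $1$-quasi lower semicontinuous function on $X$ is $1$-finely lower semicontinuous outside a set of $\mathcal H$-measure zero. Granting this, apply it to $\chi_U$ to obtain an exceptional set $N_0$ with $\mathcal H(N_0)=0$ such that $\chi_U$ is $1$-finely lsc on $X\setminus N_0$. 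For $x\in U\setminus N_0$, since $\chi_U(x)=1>0$, fine lower semicontinuity forces $\{\chi_U>0\}=U$ to be a $1$-fine neighborhood of $x$, i.e.\ $X\setminus U$ is $1$-thin at $x$, so $x\in\fint U$. Hence $N\subset N_0$ and $\mathcal H(N)=0$.

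The principal obstacle is precisely this Kellogg-type statement. It is the $p=1$ analog of the classical fact that Sobolev quasicontinuous functions are $1$-finely continuous quasi-everywhere, and I would establish it by writing each quasiopen superlevel set $\{u>t\}$ (with $t\in\Q$) as the union of an open set and a set of arbitrarily small $1$-capacity, then using Definition \ref{def:1 fine topology} together with a covering argument relating $\rcapa_1$ to $\mathcal H$; this is the content of the fine-topology machinery from \cite{L-Fed, L-FC}. The remaining step, a countable union over rational levels $t$, is routine and adds no further difficulty.
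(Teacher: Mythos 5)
The paper itself contains no proof of this statement: it is imported as Corollary~6.12 of \cite{L-CK}, a paper devoted precisely to the Choquet and Kellogg properties for the fine topology in the case $p=1$. So the only thing to assess is whether your blind argument actually closes the circle, and it does not. Your ``if'' direction is fine once one grants, as you do, that $1$-finely open sets are $1$-quasiopen; outer regularity of $\capa_1$ (via~\eqref{eq:null sets of Hausdorff measure and capacity}) and subadditivity then finish it cleanly. In the ``only if'' direction, the reduction via $\chi_U$ to the statement ``every $1$-quasi lower semicontinuous function is $1$-finely lower semicontinuous $\mathcal H$-a.e.'' is logically sound (your application of fine lsc at $x\in U$ with the superlevel set $\{\chi_U>0\}=U$ is exactly right), and it correctly identifies the crux of the matter as a Kellogg-type property. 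But that quasi-lsc statement is, modulo the standard rational-level trick, \emph{equivalent} to the theorem being proved, so the reduction does not actually discharge any work.

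The problem is the sketch you then give for the Kellogg-type statement, which is where essentially all the content lies, and the sketch does not work. A $1$-quasiopen set is \emph{not} the union of an open set and a set of arbitrarily small $1$-capacity: quasiopenness of $U$ means that for every $\eps>0$ there is an open $G$ with $\capa_1(G)<\eps$ and $U\cup G$ open, and writing $U=(U\setminus G)\cup(U\cap G)$ makes $U\cap G$ small in capacity but leaves $U\setminus G=(U\cup G)\cap(X\setminus G)$ merely relatively closed in $U\cup G$, not open. A decomposition of $U$ into a ``nice'' piece plus a small piece is exactly the \emph{conclusion} one is after (with ``$1$-finely open'' and ``$\mathcal H$-null''), so it cannot be used as a hypothesis. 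Moreover the Kellogg-type property you invoke is not in \cite{L-Fed} or \cite{L-FC}; it is precisely what is proved in \cite{L-CK}, whose Corollary~6.12 is the present theorem. To make the ``only if'' direction honest you would need to actually establish a Kellogg/Choquet-type result for $p=1$ --- e.g.\ via the Cartan-type property of \cite{L-NC} together with a comparison between $\rcapa_1$ and $\mathcal H$ --- rather than gesture at an incorrect decomposition of quasiopen sets. As it stands, the ``only if'' direction has a genuine gap.
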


\begin{theorem}[{\cite[Theorem 5.1]{L-NC}}]\label{thm:fine continuity and quasicontinuity equivalence}
	A function $u$ on a $1$-quasiopen set $U$ is
	$1$-quasicontinuous on $U$ if and only if it is finite $1$-q.e. and $1$-finely
	continuous $1$-q.e. in $U$.
\end{theorem}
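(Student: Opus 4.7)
The plan is to handle the two implications separately, using Theorem \ref{thm:finely open is quasiopen and vice versa} as the bridge between the $1$-fine picture and the $1$-quasi-picture.

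\textbf{Forward direction ($\Rightarrow$).} Suppose $u$ is $1$-quasicontinuous on $U$. For each $n\in\N$ I would pick an open set $G_n$ with $\capa_1(G_n)<2^{-n}$ such that $u|_{U\setminus G_n}$ is finite and continuous, and then pass to the tail unions $\widetilde G_n:=\bigcup_{k\ge n}G_k$ in order to make the sequence decreasing with $\capa_1(\widetilde G_n)\le 2^{-n+1}$. Then $N_0:=\bigcap_n\widetilde G_n$ has zero $1$-capacity and $u$ is finite on $U\setminus N_0$, which gives the finiteness statement. For the fine continuity part, the key input is a Kellogg-type property for the $1$-fine topology: every open set $G\subset X$ is $1$-thin at $1$-q.e. point of $X\setminus G$. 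Applying this to each $\widetilde G_n$, combined with Theorem \ref{thm:finely open is quasiopen and vice versa} applied to $U$ itself (together with \eqref{eq:null sets of Hausdorff measure and capacity}), one sees that for $1$-q.e.\ $x\in U$ and each sufficiently large $n$, the set $U\setminus\widetilde G_n$ is a $1$-fine neighbourhood of $x$. Since the $1$-fine topology refines the metric topology, the metric continuity of $u|_{U\setminus\widetilde G_n}$ at $x$ then upgrades to $1$-fine continuity of $u$ at $x$.

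\textbf{Backward direction ($\Leftarrow$).} Conversely, let $N\subset U$ have $\capa_1(N)=0$ and be such that $u$ is finite and $1$-finely continuous at every point of $U\setminus N$. For each rational $q$ the sets
\[
A_q^+:=\{x\in U\setminus N:u(x)>q\},\qquad A_q^-:=\{x\in U\setminus N:u(x)<q\}
\]
are open in the induced $1$-fine topology on $U\setminus N$, so each of them differs from a $1$-finely open subset of $X$ by a subset of $N$, and is therefore $1$-quasiopen by Theorem \ref{thm:finely open is quasiopen and vice versa} (together with the standard outer regularity of $\capa_1$ on capacity-zero sets). Fixing $\varepsilon>0$ and enumerating the rationals as $(q_j)$, I would choose open $W_j^\pm$ with $\capa_1(W_j^\pm)<\varepsilon\,2^{-j-1}$ such that $A_{q_j}^\pm\cup W_j^\pm$ is open, together with an open $W_N\supset N$ with $\capa_1(W_N)<\varepsilon$. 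Setting $G:=W_N\cup\bigcup_j(W_j^+\cup W_j^-)$ gives $\capa_1(G)\le C\varepsilon$ by countable subadditivity, and on $U\setminus G$ every rational super- and sublevel set of $u$ is relatively open, which forces $u|_{U\setminus G}$ to be continuous.

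\textbf{Main obstacle.} The genuinely delicate input is the Kellogg-type property invoked in the forward direction; this is nontrivial already in the $p>1$ theory, and for $p=1$ one would expect to lean on the fine-potential-theoretic machinery in \cite{L-Fed} and \cite{L-FC}. The remaining ingredients --- outer capacity regularity, countable subadditivity of $\capa_1$, and Theorem \ref{thm:finely open is quasiopen and vice versa} itself --- are comparatively routine bookkeeping.
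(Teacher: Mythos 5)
This statement is not proved in the paper at all: it is a black‐box import from \cite[Theorem~5.1]{L-NC}, so there is no local proof to compare your sketch against. Judged on its own terms, your plan is the standard one (it mirrors the $p>1$ argument of \cite{BBL-SS} and the $p=1$ argument of \cite{L-NC}), and you have correctly located the genuine mathematical content: the forward direction cannot be done by bookkeeping alone and really needs a Kellogg/Cartan-type property for the $1$-fine topology, which is precisely what \cite{L-NC} (the new Cartan-type property) and \cite{L-CK} (the Choquet and Kellogg properties) supply. The backward direction is, as you say, routine outer-regularity and countable-subadditivity bookkeeping.

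Two small repairs are worth making in the backward direction. First, the claim that each $A_q^\pm$ ``differs from a $1$-finely open subset of $X$ by a subset of $N$'' is not quite right: $A_q^\pm=V\cap(U\setminus N)$ with $V$ $1$-finely open, and since $U$ is only $1$-quasiopen it contributes its own $\mathcal H$-null defect, so the symmetric difference lives in $N\cup E_U$ where $E_U$ is the $\mathcal H$-negligible set from writing $U=V_U\cup E_U$ via Theorem~\ref{thm:finely open is quasiopen and vice versa}. Since both $N$ and $E_U$ are $\mathcal H$-null (using \eqref{eq:null sets of Hausdorff measure and capacity}), you still land in the hypotheses of Theorem~\ref{thm:finely open is quasiopen and vice versa}, so the conclusion is unchanged. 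Second, you tacitly use that $V\setminus N$ remains $1$-finely open when $\capa_1(N)=0$; this holds because a set of zero $1$-capacity is $1$-thin at every point of $X$ (test with cut-off multiples of functions witnessing $\capa_1(N)=0$), hence $X\setminus N$ is $1$-finely open, and then $V\setminus N=V\cap(X\setminus N)$ is an intersection of $1$-finely open sets. It would be cleaner to state this lemma explicitly rather than leave it folded into ``outer regularity.'' In the forward direction your bookkeeping is fine; you need Theorem~\ref{thm:finely open is quasiopen and vice versa} together with \eqref{eq:null sets of Hausdorff measure and capacity} to know that $U$ is a $1$-fine neighbourhood of $1$-q.e.\ $x\in U$, and the rest then goes through as you wrote it once the Kellogg property is granted.
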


\emph{Throughout this paper we assume that $(X,d,\mu)$ is a complete metric space
	that is equipped with a doubling measure $\mu$ and supports a
	$(1,1)$-Poincar\'e inequality.}

\section{Preliminary results}\label{sec:preliminary results}

In this section we prove and record some preliminary results 
needed in constructing the discrete convolutions in $1$-quasiopen sets.
We start with simple lemmas concerning the total variation.
The first lemma states that in the definition of the total variation, we can
consider convergence in $L^1(\Om)$
instead of convergence in  $L_{\loc}^1(\Om)$.

\begin{lemma}[{\cite[Lemma 5.5]{KLLS}}]\label{lem:L1 loc and L1 convergence}
Let $\Omega\subset X$ be an open set and let $u\in L^1_{\loc}(\Omega)$
with $\Vert Du\Vert(\Omega)<\infty$. Then there exists a sequence
$(u_i)\subset \liploc(\Omega)$ with $u_i-u\to 0$ in $L^1(\Omega)$ and 
$\int_\Omega g_{u_i}\, d\mu\to \Vert Du\Vert(\Omega)$, where each
$g_{u_i}$ is the minimal $1$-weak upper gradient of $u_i$ in $\Om$.
\end{lemma}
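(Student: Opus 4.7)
The plan is to strengthen the sequence provided by the definition \eqref{eq:definition of Du in open set} of $\Vert Du\Vert(\Om)$ in two complementary ways: to replace the Newton-Sobolev approximants by locally Lipschitz ones, and to upgrade $L^1_{\loc}(\Om)$ convergence to $L^1(\Om)$ convergence, while preserving convergence of the upper gradient integrals. This is the metric analog of the classical strict-approximation result for $\BV$ functions going back to Miranda \cite{M}.

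Since $\Vert Du\Vert(\Om)<\infty$, the variation $\Vert Du\Vert$ is a finite Radon measure on $\Om$ by \cite[Theorem 3.4]{M}. I would fix an exhaustion $\Om_0\Subset\Om_1\Subset\cdots$ of $\Om$ by open sets with $\Vert Du\Vert(\Om\setminus\Om_k)\to 0$, and construct a Lipschitz partition of unity $(\phi_k)_{k\ge 0}$ subordinate to the annular cover $A_k:=\Om_{k+1}\setminus\overline{\Om_{k-1}}$ (setting $\Om_{-1}:=\emptyset$), so that $\sum_k\phi_k\equiv 1$ on $\Om$ and the supports are locally finite; this is the standard Whitney-type partition available under doubling. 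Let $L_k$ denote the Lipschitz constant of $\phi_k$.

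On each slight enlargement $A_k^\ast\Subset\Om$ of $A_k$, apply the definition \eqref{eq:definition of Du in open set} restricted to $A_k^\ast$, together with the density of locally Lipschitz functions in $N^{1,1}(A_k^\ast)$ available under our doubling and Poincar\'e assumptions, to produce approximating sequences. By a diagonal choice, one obtains $w_k^{(n)}\in\liploc(A_k^\ast)$ with
\[
\Vert w_k^{(n)}-u\Vert_{L^1(A_k^\ast)}\le \frac{2^{-k}}{n(1+L_k)}\quad\text{and}\quad \int_{A_k^\ast}g_{w_k^{(n)}}\,d\mu\le \Vert Du\Vert(A_k^\ast)+\frac{2^{-k}}{n}.
\]
Set $u_n:=\sum_{k\ge 0}\phi_k w_k^{(n)}$. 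Local finiteness of the partition makes $u_n$ locally Lipschitz, and the identity $u_n-u=\sum_k\phi_k(w_k^{(n)}-u)$ coming from $\sum_k\phi_k\equiv 1$ yields $\Vert u_n-u\Vert_{L^1(\Om)}\le\sum_k 2^{-k}/(n(1+L_k))\le 2/n\to 0$.

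The main obstacle is the matching upper gradient estimate $\limsup_n\int_\Om g_{u_n}\,d\mu\le\Vert Du\Vert(\Om)$; the lower bound comes for free from \eqref{eq:definition of Du in open set}. The Leibniz rule for upper gradients yields
\[
g_{u_n}\le \sum_k\phi_k g_{w_k^{(n)}}+\sum_k|w_k^{(n)}-u|\,g_{\phi_k},
\]
where the second sum integrates to at most $\sum_k L_k\cdot 2^{-k}/(n(1+L_k))\le 2/n$ and so is $O(1/n)$. The first sum is the delicate piece: it integrates to $\sum_k\int_{A_k^\ast}g_{w_k^{(n)}}\,d\mu\le \sum_k(\Vert Du\Vert(A_k^\ast)+2^{-k}/n)$, which by bounded overlap of the enlargements $A_k^\ast$ is a priori only controlled by a constant multiple of $\Vert Du\Vert(\Om)$. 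Reducing this constant to $1+o(1)$ requires refining the exhaustion so that $\Vert Du\Vert$ on the tail annuli vanishes fast enough, then using the lower semicontinuity built into \eqref{eq:definition of Du in open set} to sandwich $\liminf$ and $\limsup$ of $\int g_{u_n}\,d\mu$ at the common value $\Vert Du\Vert(\Om)$.
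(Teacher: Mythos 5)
The paper does not prove this lemma; it is quoted as \cite[Lemma~5.5]{KLLS}, so your attempt can only be weighed against the argument in that reference. Your overall strategy (dyadic exhaustion, Lipschitz partition of unity subordinate to overlapping annuli, local density of $\liploc$ in the Newton--Sobolev sense, gluing, Leibniz rule) is the right skeleton, and your treatment of the $L^1$ error and the cross-term $\sum_k|w_k^{(n)}-u|\,g_{\phi_k}$ is correct.

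However, the place you flag as ``the delicate piece'' contains a genuine gap that your proposed remedy does not close. Estimating $\sum_k\int\phi_k g_{w_k^{(n)}}\,d\mu$ by $\sum_k\big(\Vert Du\Vert(A_k^\ast)+2^{-k}/n\big)$ and then invoking bounded overlap inevitably produces $3\Vert Du\Vert(\Om)+o(1)$ or thereabouts, and no amount of ``refining the exhaustion so that $\Vert Du\Vert$ on the tail annuli vanishes fast enough'' removes the overlap constant: the annuli $A_k^\ast$ \emph{must} overlap because the partition of unity needs them to. Appealing to lower semicontinuity only ``sandwiches'' the wrong side — it furnishes $\liminf_n\int_\Om g_{u_n}\,d\mu\ge\Vert Du\Vert(\Om)$, but your problem is precisely the $\limsup$.

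What actually makes the constant collapse to $1$ is to avoid estimating $\int_{A_k^\ast}g_{w_k}\,d\mu$ at all and instead estimate the \emph{weighted} quantity $\int\phi_k g_{w_k}\,d\mu$. Take a single sequence $(v_i)\subset\liploc(\Om)$ from the definition \eqref{eq:definition of Du in open set} with $v_i\to u$ in $L^1_{\loc}(\Om)$ and $\int_\Om g_{v_i}\,d\mu\to\Vert Du\Vert(\Om)$. The definition also gives $\liminf_i\int_W g_{v_i}\,d\mu\ge\Vert Du\Vert(W)$ for every open $W\subset\Om$, and together with total-mass convergence this forces $\limsup_i\int_F g_{v_i}\,d\mu\le\Vert Du\Vert(F)$ for closed $F\subset\Om$. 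By the Portmanteau theorem the measures $g_{v_i}\,d\mu$ converge weakly* to $\Vert Du\Vert$, so for each fixed bounded continuous $\phi_k$ with compact support one has $\int\phi_k g_{v_i}\,d\mu\to\int\phi_k\,d\Vert Du\Vert$. Choosing the index for the $k$-th piece from this single sequence so that
\[
\int\phi_k\,g_{w_k^{(n)}}\,d\mu<\int\phi_k\,d\Vert Du\Vert+\frac{2^{-k}}{n},
\]
the main term becomes $\sum_k\int\phi_k\,d\Vert Du\Vert+O(1/n)=\Vert Du\Vert(\Om)+O(1/n)$ \emph{exactly} because $\sum_k\phi_k\equiv 1$; the overlap is absorbed by the partition of unity rather than multiplied. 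This is the metric-space analog of the convolution duality $\int\phi\,d(\rho_\eps*|Du|)=\int(\rho_\eps*\phi)\,d|Du|$ used in the Euclidean strict-approximation theorem, and it (or something playing the same role) is the step you are missing.
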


Note that we cannot write $u_i\to u$ in $L^1(\Omega)$, since the functions $u_i,u$ are not necessarily
in the class $L^1(\Om)$.

Now we generalize this to $1$-quasiopen sets.

\begin{lemma}\label{lem:optimal sequence in quasiopen sets}
Let $U\subset \Om\subset X$ be such that $U$ is $1$-quasiopen and
$\Om$ is open, and let
$u\in L^1_{\loc}(\Om)$ with $\Vert Du\Vert(\Om)<\infty$.
Then there exists a sequence
	$(u_i)\subset \liploc(U)$ such that $u_i- u\to 0$ in $L^1(U)$ and
	\[
	\lim_{i\to \infty}\int_{U}g_{u_i,U}\,d\mu=\Vert Du\Vert(U).
	\]
\end{lemma}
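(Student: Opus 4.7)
The plan is to reduce to the open-set case handled by Lemma \ref{lem:L1 loc and L1 convergence} via a family of open supersets of $U$, and then to combine a diagonal selection with the lower semicontinuity of Theorem \ref{thm:lower semic in quasiopen sets}.

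First I would pick a sequence of open sets $W_j$ with $U\subset W_j\subset\Om$ and $\Vert Du\Vert(W_j)\to\Vert Du\Vert(U)$. This is possible because the definition (\ref{eq:definition of Du in arbitrary set}), interpreted (as the paragraph following it explains) relative to the ambient open set $\Om$ on which $u$ is defined, produces $\Vert Du\Vert(U)$ as the infimum of $\Vert Du\Vert(W)$ over open $W$ with $U\subset W\subset\Om$. Applying Lemma \ref{lem:L1 loc and L1 convergence} on each $W_j$ yields a sequence $(w^{(j)}_i)_i\subset\liploc(W_j)$ with $w^{(j)}_i-u\to 0$ in $L^1(W_j)$ and $\int_{W_j}g_{w^{(j)}_i,W_j}\,d\mu\to\Vert Du\Vert(W_j)$ as $i\to\infty$.

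Next I would diagonalize: for each $j$ choose $i(j)$ with $\int_{W_j}|w^{(j)}_{i(j)}-u|\,d\mu<1/j$ and $\int_{W_j}g_{w^{(j)}_{i(j)},W_j}\,d\mu<\Vert Du\Vert(W_j)+1/j$, and set $u_j:=w^{(j)}_{i(j)}|_U$. Then $u_j\in\liploc(U)$ since $W_j\supset U$ is open and $w^{(j)}_{i(j)}$ is locally Lipschitz there, and $\int_U|u_j-u|\,d\mu<1/j\to 0$. Since the curves contained in $U$ are a subfamily of those contained in $W_j$, the restriction of $g_{w^{(j)}_{i(j)},W_j}$ is a $1$-weak upper gradient of $u_j$ in $U$, whence $g_{u_j,U}\le g_{w^{(j)}_{i(j)},W_j}$ a.e. in $U$, and therefore
\[
\int_U g_{u_j,U}\,d\mu\le \int_{W_j}g_{w^{(j)}_{i(j)},W_j}\,d\mu<\Vert Du\Vert(W_j)+\tfrac{1}{j}\xrightarrow{j\to\infty}\Vert Du\Vert(U).
\]

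The matching $\liminf$-bound comes from Theorem \ref{thm:lower semic in quasiopen sets}: since $u_j\to u$ in $L^1_{\loc}(U)$ and $\Vert Du\Vert(U)<\infty$,
\[
\Vert Du\Vert(U)\le \liminf_{j\to\infty}\Vert Du_j\Vert(U)\le \liminf_{j\to\infty}\int_U g_{u_j,U}\,d\mu,
\]
where the last inequality follows from the characterization in Theorem \ref{thm:characterization of total variational} applied to the constant sequence $u_j$, admissible because $u_j\in N^{1,1}_{\loc}(U)$ and $\Vert Du_j\Vert(U)\le \Vert Du_j\Vert(W_j)<\infty$. I expect the most delicate point to be the modulus-monotonicity argument establishing $g_{u_j,U}\le g_{w^{(j)}_{i(j)},W_j}$ a.e. in $U$; once this is in place, the rest is a routine diagonal argument.
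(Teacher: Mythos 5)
Your proposal is correct and follows essentially the same route as the paper: both pick open supersets $W_j$ of $U$ inside $\Om$ with $\Vert Du\Vert(W_j)\to\Vert Du\Vert(U)$, apply Lemma \ref{lem:L1 loc and L1 convergence} on each $W_j$, pick a diagonal $u_j$, use that restricting a minimal $1$-weak upper gradient from $W_j$ to $U$ gives a $1$-weak upper gradient in $U$ to get $\limsup_j\int_U g_{u_j,U}\,d\mu\le\Vert Du\Vert(U)$, and then close with Theorem \ref{thm:characterization of total variational} (the paper invokes this directly; you also route through Theorem \ref{thm:lower semic in quasiopen sets}, which is equivalent here). The ``delicate point'' you flag is the standard fact that a $1$-weak upper gradient on a larger open set restricts to one on a subset, and the paper uses it implicitly in exactly the same way.
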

Recall that $g_{u_i,U}$ denotes the minimal $1$-weak upper gradient of $u_i$ in $U$.
\begin{proof}
Take
open sets $\Om_i$ such that $U\subset \Om_i\subset \Om$ and
$\Vert Du\Vert(\Om_i)<\Vert Du\Vert(U)+1/i$, for each $i\in\N$.
By Lemma \ref{lem:L1 loc and L1 convergence} we find functions
$u_i\in \liploc(\Om_i)\subset \liploc(U)$ such that
$\Vert u_i-u\Vert_{L^1(\Om_i)}<1/i$ and
\[
\int_{\Om_i}g_{u_i,\Om_i}\,d\mu< \Vert Du\Vert(\Om_i)+1/i.
\]
It follows that $u_i- u\to 0$ in $L^1(U)$ and
	\[
	\limsup_{i\to \infty}\int_{U}g_{u_i,U}\,d\mu
	\le \limsup_{i\to \infty}\int_{U}g_{u_i,\Om_i}\,d\mu
	\le\Vert Du\Vert(U),
	\]
and then by Theorem \ref{thm:characterization of total variational} we must
in fact have
\[
	\lim_{i\to \infty}\int_{U}g_{u_i,U}\,d\mu=\Vert Du\Vert(U).
	\]
\end{proof}

Next we consider preliminary approximation results for $\BV$ functions.
We have the following approximation result for $\BV$ functions whose jumps
remain bounded.

\begin{proposition}[{\cite[Proposition 5.2]{L-Appr}}]\label{prop:uniform approximation}
	Let $U\subset \Om$ such that $U$ is $1$-quasiopen
	and $\Om$ is open, and let $u\in\BV(\Om)$ and $\beta>0$ such that
	$u^{\vee}-u^{\wedge}<\beta$ in $U$. Then for every $\eps>0$ there exists
	$v\in N^{1,1}(U)$ such that
	$\Vert v-u\Vert_{L^{\infty}(U)}\le 4\beta$ and
	\[
	\int_{U}g_v\,d\mu< \Vert Du\Vert(U)+\eps.
	\]
\end{proposition}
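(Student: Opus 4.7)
The strategy is a layer-cake decomposition: approximate $u$ by a Newton-Sobolev function $v$ built from approximations to its superlevel sets $\{u>t_k\}$, at discretization step $t_{k+1}-t_k=\beta$ matched exactly to the jump bound. The hypothesis $u^{\vee}-u^{\wedge}<\beta$ is then precisely what is needed to control the $L^{\infty}$ distance via a nesting argument, while the coarea formula delivers the upper gradient bound.

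First I would choose the shift. By the coarea formula for $\BV$ functions, $\int_{\R}\Vert D\ch_{\{u>t\}}\Vert(U)\,dt=\Vert Du\Vert(U)$, so an averaging argument over $t_0\in[0,\beta)$ produces some $t_0$ such that every $\{u>t_k\}$ (with $t_k:=k\beta+t_0$) has finite perimeter in $\Om$ and
\[
\beta\sum_{k\in\Z}\Vert D\ch_{\{u>t_k\}}\Vert(U)\le \Vert Du\Vert(U)+\eps/2.
\]
Let $E_k:=\{u^{\wedge}>t_k\}$; by Proposition \ref{prop:quasisemicontinuity} combined with Theorem \ref{thm:finely open is quasiopen and vice versa}, each $E_k$ is $1$-quasiopen, and $\ch_{E_k}=\ch_{\{u>t_k\}}$ a.e. The strict inequality $u^{\vee}-u^{\wedge}<\beta$ then yields the nesting $E_{k+1}\subset E_k\subset\{u^{\vee}>t_k\}\subset E_{k-1}$ inside $U$.

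The core technical step is to construct, for each $k$, a function $\phi_k\in N^{1,1}(U)$ satisfying both $\ch_{E_k}\le \phi_k\le\ch_{E_{k-1}}$ on $U$ and $\int_U g_{\phi_k}\,d\mu\le \Vert D\ch_{E_k}\Vert(U)+\delta_k$, where $(\delta_k)$ is a summable sequence with $\beta\sum_k\delta_k\le\eps/2$. I would apply Lemma \ref{lem:optimal sequence in quasiopen sets} to $\ch_{E_k}\in\BV(\Om)$ to produce a Lipschitz sequence in $U$ whose upper gradient integrals approach $\Vert D\ch_{E_k}\Vert(U)$, then clip between Newton-Sobolev barriers that equal $1$ on $E_k$ and $0$ outside $E_{k-1}$ by means of the lattice operations \eqref{eq:Newton Sobolev functions form lattice} (which do not enlarge upper gradient norms). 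The existence of such barriers with sharp variational capacity control between nested quasiopen sets is exactly what the Newton-Sobolev cutoff theory in \cite{L-NC} provides. A preliminary reduction to bounded $u$ (truncating at $\pm N$, which preserves both $\Vert Du\Vert(U)$ and the jump hypothesis) makes all sums pointwise finite; the general case follows by letting $N\to\infty$ and invoking the lower semicontinuity given in Theorem \ref{thm:lower semic in quasiopen sets}.

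Finally I would set $v:=t_0+\beta\sum_{k\ge 1}\phi_k+\beta\sum_{k\le 0}(\phi_k-1)$ on $U$. For $x\in U$ with $u^{\wedge}(x)\in[t_j,t_{j+1})$, the nesting forces $\phi_k(x)=\ch_{E_k}(x)$ for all $k\neq j+1$ while $\phi_{j+1}(x)-\ch_{E_{j+1}}(x)\in[0,1]$; combined with the elementary layer-cake identity $t_0+\beta\sum_{k\ge 1}\ch_{E_k}(x)+\beta\sum_{k\le 0}(\ch_{E_k}(x)-1)=t_j$ and $u=u^{\wedge}$ a.e., this gives $\Vert v-u\Vert_{L^{\infty}(U)}\le 2\beta\le 4\beta$. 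Subadditivity $g_v\le\beta\sum_k g_{\phi_k}$ together with the preceding estimates yields $\int_U g_v\,d\mu\le \Vert Du\Vert(U)+\eps$. The principal obstacle is thus the third paragraph: constructing the $\phi_k$ with simultaneously the pointwise envelope $\ch_{E_k}\le\phi_k\le\ch_{E_{k-1}}$ inside the quasiopen ambient $U$ and the near-optimal upper gradient bound, which hinges on the cutoff machinery of \cite{L-NC}; all other steps are then routine bookkeeping.
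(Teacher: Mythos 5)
The paper does not prove this proposition itself; it is imported verbatim from \cite[Proposition~5.2]{L-Appr}, so I assess your argument on its own terms. The layer-cake scaffolding is sensible: the coarea averaging over $t_0$ and the nesting $E_{k+1}\subset E_k\subset\{u^{\vee}>t_k\}\subset E_{k-1}$ are both correct (from $u^{\vee}>t_k$ and $u^{\vee}-u^{\wedge}<\beta=t_k-t_{k-1}$ one gets $u^{\wedge}>t_{k-1}$). The $L^{\infty}$ computation at the end and the subadditivity $g_v\le\beta\sum_k g_{\phi_k}$ are also fine provided the building blocks $\phi_k$ exist as claimed.

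The gap is precisely in the third paragraph where you construct the $\phi_k$, and it is not a small one. You take an optimal locally Lipschitz sequence $u_i\to\ch_{E_k}$ in $L^1(U)$ from Lemma~\ref{lem:optimal sequence in quasiopen sets} and then clip it between Newton-Sobolev barriers $\phi^-\le\phi^+$ with $\phi^-=1$ on $E_k$ and $\phi^-=0$ off $E_{k-1}$, asserting that the lattice operations ``do not enlarge upper gradient norms.'' That assertion fails in the way you need it. By \eqref{eq:upper gradient in coincidence set}, on $\{u_i<\phi^-\}$ the clipped function has minimal weak upper gradient $g_{\phi^-}$, not $g_{u_i}$; and since $u_i\to\ch_{E_k}=0$ a.e.\ on $U\setminus E_k$ while $\phi^->0$ there, the sets $\{u_i<\phi^-\}$ do not shrink — they fill up the whole ``annulus'' $\{\phi^->0\}\setminus E_k$. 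In fact the clipped sequence $\min\{\phi^+,\max\{\phi^-,u_i\}\}$ converges a.e.\ to $\phi^-$ itself, not to $\ch_{E_k}$, so its Dirichlet integrals converge to $\int_U g_{\phi^-}\,d\mu$, a capacity-type quantity with no constant-$1$ relation to $\Vert D\ch_{E_k}\Vert(U)$. Contrast this with the clipping in Proposition~\ref{prop:approximation with L infinity bound}, where $v_1\le u-\beta<u<u+\beta\le v_2$ a.e.\ makes the clipped sets have measure~$\to 0$ by dominated convergence; that strict separation is exactly what you lose when the barriers coincide with the target characteristic function on a set of positive measure. To make step~3 work you would need to build $\phi_k$ directly so that essentially every level set $\{\phi_k>s\}$ has perimeter close to $\Vert D\ch_{E_k}\Vert(U)$ — a sharp envelope-constrained capacity-to-perimeter statement that the cutoff machinery of \cite{L-NC} (which delivers existence and compact support, not near-optimal energy) does not provide. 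Without that ingredient the rest of the bookkeeping does not get off the ground.
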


In fact, in the proof of the above proposition in \cite{L-Appr}, the $L^{\infty}$-bound
is stated in the following slightly more precise way (note that $v$, $u^{\wedge}$,
and $u^{\vee}$ are all pointwise defined functions):
\begin{equation}\label{eq:v and u pointwise representative L infty bound}
u^{\vee}-4\beta \le v \le u^{\wedge}+4\beta\quad\textrm{in }U.
\end{equation}

By \cite[Corollary 2.21]{BB} we know that if $H\subset X$ is a
$\mu$-measurable set and $v,w\in N_{\loc}^{1,1}(H)$, then
\begin{equation}\label{eq:upper gradient in coincidence set}
g_v=g_w\ \ \textrm{a.e. in}\ \{x\in H:\,v(x)=w(x)\},
\end{equation}
where $g_v$ and $g_w$ are the minimal $1$-weak upper gradients of $v$ and $w$ in $H$.

The following proposition improves on
Lemma \ref{lem:optimal sequence in quasiopen sets}
by adding an $L^{\infty}$-bound.

\begin{proposition}\label{prop:approximation with L infinity bound}
Let $U\subset \Om\subset X$ be such that $U$ is $1$-quasiopen and
$\Om$ is open, and let $u\in\BV(\Om)$ and $\beta>0$ such that
	$u^{\vee}-u^{\wedge}<\beta$ in $U$.
Then there exists a sequence
$(u_i)\subset N^{1,1}(U)$ such that
$u_i\to u$ in $L^1(U)$, $\sup_{U}| u_i-u^{\vee}|\le 9\beta$
for all $i\in\N$, and
\[
\lim_{i\to\infty}\int_{U} g_{u_i}\,d\mu=\Vert Du\Vert(U),
\]
where each $g_{u_i}$
is the minimal $1$-weak upper gradient of $u_i$ in $U$.
\end{proposition}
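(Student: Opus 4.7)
The plan is to combine the two preceding approximation results: Lemma~\ref{lem:optimal sequence in quasiopen sets} yields a sequence converging in $L^1(U)$ with optimal upper gradient energy but no $L^{\infty}$-control, while Proposition~\ref{prop:uniform approximation} yields near-energy-optimal functions with the desired $L^{\infty}$-bound but no $L^1$-approximation. I would merge them by a window truncation: keep the $L^1$-optimal sequence wherever it stays within a $5\beta$-window of a bounded near-optimal function, and clip it otherwise.

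Concretely, for each $j\in\N$ I would pick $v_j\in N^{1,1}(U)$ from Proposition~\ref{prop:uniform approximation} with $\eps=1/j$, so that \eqref{eq:v and u pointwise representative L infty bound} gives $u^{\vee}-4\beta\le v_j\le u^{\wedge}+4\beta$ in $U$ and $\int_U g_{v_j}\,d\mu<\Vert Du\Vert(U)+1/j$, and fix a sequence $(w_i)\subset\liploc(U)$ from Lemma~\ref{lem:optimal sequence in quasiopen sets} with $w_i-u\to 0$ in $L^1(U)$ and $\int_U g_{w_i,U}\,d\mu\to \Vert Du\Vert(U)$. Then set
\[
u_j:=\max\bigl\{v_j-5\beta,\,\min\{w_{i(j)},\,v_j+5\beta\}\bigr\}
\]
for a diagonal index $i(j)$ to be fixed below. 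The pointwise inequalities $|u_j-v_j|\le 5\beta$ and $|v_j-u^{\vee}|\le 4\beta$ yield $\sup_U|u_j-u^{\vee}|\le 9\beta$ at once. Setting $A_j:=\{|w_{i(j)}-v_j|<5\beta\}$ and $B_j:=U\setminus A_j$, we have $u_j=w_{i(j)}$ on $A_j$ and $u_j=v_j\pm 5\beta$ on $B_j$, so \eqref{eq:upper gradient in coincidence set} gives
\[
\int_U g_{u_j,U}\,d\mu=\int_{A_j}g_{w_{i(j)},U}\,d\mu+\int_{B_j}g_{v_j,U}\,d\mu.
\]

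I would choose $i(j)$ large enough to enforce $\Vert w_{i(j)}-u\Vert_{L^1(U)}<1/j$, $\int_U g_{w_{i(j)},U}\,d\mu<\Vert Du\Vert(U)+1/j$, and $\int_{B_j}g_{v_j,U}\,d\mu<1/j$. The first two are immediate from the choice of $(w_i)$. The third is the delicate step and the main obstacle: the family $(g_{v_j,U})$ is only bounded in $L^1(U)$, with no uniform absolute continuity to lean on. The escape is that $v_j$ is fixed \emph{before} $i(j)$ is picked, so one can invoke the absolute continuity of the single function $g_{v_j,U}\in L^1(U)$. Since $|v_j-u|\le 4\beta$ a.e.\ (from \eqref{eq:v and u pointwise representative L infty bound} and $u=u^{\vee}$ a.e.), one has $B_j\subset\{|w_{i(j)}-u|\ge\beta\}$ a.e., and Chebyshev's inequality together with $\Vert w_{i(j)}-u\Vert_{L^1(U)}<1/j$ forces $\mu(B_j)\to 0$. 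With these $i(j)$ the energy integral is at most $\Vert Du\Vert(U)+2/j$, and Theorem~\ref{thm:lower semic in quasiopen sets} (noting $\Vert Du\Vert(U)\le\Vert Du\Vert(\Om)<\infty$ and $\Vert Du_j\Vert(U)\le\int_U g_{u_j,U}\,d\mu$) supplies the matching lower bound. The $L^1$-convergence splits along the same partition: on $A_j$ the difference $u_j-u$ equals $w_{i(j)}-u$ with $L^1$-norm $<1/j$, while on $B_j$ it is bounded pointwise by $9\beta$ with $\mu(B_j)$ arbitrarily small, giving both $u_j\to u$ in $L^1(U)$ and the membership $u_j\in N^{1,1}(U)$.
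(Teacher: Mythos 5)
Your proof is correct and its central device coincides with the paper's: truncate the $L^1$-optimal sequence $w_i$ into a $5\beta$-window around a near-energy-optimal $N^{1,1}$-function coming from Proposition~\ref{prop:uniform approximation}, and then localize the upper gradient via \eqref{eq:upper gradient in coincidence set}. The differences are technical. Your $j$-dependent choice $v_j$ (taking $\eps=1/j$) is an unnecessary diagonalization: in your final energy estimate $g_{v_j}$ is integrated only over the shrinking set $B_j$, which you already control by absolute continuity, so a single fixed $v$ with $\eps=1$ suffices. The paper does exactly that, setting $v_1:=v-5\beta$, $v_2:=v+5\beta$ once, passing to a subsequence with $w_i\to u$ a.e., and using dominated convergence to make $\int_{\{w_i>v_2\}}g_{v_2}\,d\mu\to 0$ (exploiting that $u$ lies strictly between $v_1$ and $v_2$ a.e.); your Chebyshev-plus-absolute-continuity argument is an equivalent mechanism. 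One step of yours that deserves tightening: to get the matching lower bound you combine Theorem~\ref{thm:lower semic in quasiopen sets} with the inequality $\Vert Du_j\Vert(U)\le\int_U g_{u_j}\,d\mu$, but under the paper's convention $\Vert Du_j\Vert(U)<\infty$ presupposes a $\BV$ extension of $u_j$ to an open neighbourhood of $U$, which you have not produced. The cleaner route is the paper's: since $u_j\in N^{1,1}(U)\subset N^{1,1}_{\loc}(U)$ and $u_j\to u$ in $L^1(U)$, Theorem~\ref{thm:characterization of total variational} directly yields $\Vert Du\Vert(U)\le\liminf_j\int_U g_{u_j}\,d\mu$. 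Also, the $L^1$-convergence admits a one-line argument you may want to adopt: since $v-5\beta\le u\le v+5\beta$ a.e., clipping $w_i$ to this window can only decrease the pointwise distance to $u$, so $\Vert u_i-u\Vert_{L^1(U)}\le\Vert w_i-u\Vert_{L^1(U)}$.
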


Also in the proof below, $g$ with a subscript always denotes the minimal
$1$-weak upper gradient of a function in $U$
(even though we sometimes integrate it only over a subset of $U$).
The proof reveals that we also have $\sup_{U}| u_i-u^{\wedge}|\le 9\beta$,
that is, we can replace the pointwise representative $u^{\vee}$ by
$u^{\wedge}$.

\begin{proof}
By Proposition \ref{prop:uniform approximation}
and \eqref{eq:v and u pointwise representative L infty bound} we find a function
$v\in N^{1,1}(U)$ such that
	$u^{\vee}-4\beta \le v \le u^{\wedge}+4\beta$ in $U$ and
	\[
	\int_{U}g_v\,d\mu\le \Vert Du\Vert(U)+1.
	\]
	Define $v_1:=v-5\beta$ and $v_2:=v+5\beta$, so that
	$v_1,v_2\in N_{\loc}^{1,1}(U)$ with
	$u^{\vee} -9\beta\le v_1\le u^{\wedge}-\beta$
	and $u^{\vee}+\beta\le v_2\le u^{\wedge}+9\beta$ in $U$,
	and
	\begin{equation}\label{eq:control of gvj}
	\int_{U}g_{v_j}\,d\mu\le \Vert Du\Vert(U)+1
	\end{equation}
	for $j=1,2$.
	By Lemma \ref{lem:optimal sequence in quasiopen sets} we find a sequence
	$(w_i)\subset N^{1,1}(U)$ such that $w_i\to u$ in $L^1(U)$ and
	\[
	\lim_{i\to \infty}\int_{U}g_{w_i}\,d\mu=\Vert Du\Vert(U).
	\]
	By passing to a subsequence (not relabeled), we can assume that also
	$w_i\to u$ a.e. in $U$.
	Then define $u_i:=\min\{v_2,\max\{v_1, w_i\}\}$.
	By Lebesgue's differentiation theorem,
	we have also $u -9\beta\le v_1\le u-\beta$
	and $u+\beta\le v_2\le u+9\beta$ a.e. in $U$, whence
	\[
	\Vert u_i-u\Vert_{L^{1}(U)}\le \Vert w_i-u\Vert_{L^{1}(U)}\to 0,
	\]
	that is $u_i\to u$ in $L^1(U)$, as desired.
	Moreover, $\sup_{U}| u_i-u^{\vee}|\le 9\beta$ for all $i\in\N$.
	In addition, by \eqref{eq:upper gradient in coincidence set} we have
	for each $i\in\N$
	\begin{align*}
	\int_{U}g_{u_i}\,d\mu
	&=\int_{\{w_i>v_2\}}g_{v_2}\,d\mu
	+\int_{\{w_i<v_1\}}g_{v_1}\,d\mu
	+\int_{\{v_1\le w_i\le v_2\}}g_{w_i}\,d\mu\\
	&\le\int_{\{w_i>v_2\}}g_{v_2}\,d\mu
	+\int_{\{w_i<v_1\}}g_{v_1}\,d\mu
	+\int_{U}g_{w_i}\,d\mu.
	\end{align*}
	Since $\int_{U}g_{v_2}\,d\mu<\infty$ by \eqref{eq:control of gvj}
	and since $w_i\to u<v_2$ a.e. in $U$,
	by Lebesgue's dominated convergence theorem we get 
	$\int_{\{w_i>v_2\}}g_{v_2}\,d\mu\to 0$.
	Treating the integral involving $v_1$ similarly, we get
	\[
	\limsup_{i\to\infty}\int_{U}g_{u_i}\,d\mu
	\le\limsup_{i\to\infty}\int_{U}g_{w_i}\,d\mu=\Vert Du\Vert(U),
	\]
	and then in fact $\lim_{i\to\infty}\int_{U}g_{u_i}\,d\mu=\Vert Du\Vert(U)$
	by Theorem \ref{thm:characterization of total variational}.
\end{proof}

The variation measure is always absolutely continuous with respect to the
$1$-capacity, in the following sense.

\begin{lemma}[{\cite[Lemma 3.8]{L-SA}}]\label{lem:variation measure and capacity}
	Let $\Omega\subset X$ be an open set and
	let $u\in L^1_{\loc}(\Omega)$ with $\Vert Du\Vert(\Omega)<\infty$. Then for every
	$\eps>0$ there exists $\delta>0$ such that if $A\subset \Omega$ with $\capa_1 (A)<\delta$,
	then $\Vert Du\Vert(A)<\eps$.
\end{lemma}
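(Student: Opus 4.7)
The plan is to argue by contradiction, relying on the fact that $\Vert Du\Vert$ extends to a finite Radon measure on $\Om$ and that this measure must vanish on every set of zero $1$-capacity.

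Suppose the conclusion fails: there exist $\eps_0>0$ and sets $A_k\subset\Om$ with $\capa_1(A_k)<2^{-k}$ and $\Vert Du\Vert(A_k)\ge\eps_0$ for every $k\in\N$. Using the $1$-quasicontinuity of admissible Newton--Sobolev functions (cf.\ \eqref{eq:quasicontinuous on quasiopen set}) together with the lattice property of $N^{1,1}(X)$, I would inflate each $A_k$ to an \emph{open} set $G_k\subset\Om$ with $A_k\subset G_k$ and $\capa_1(G_k)\le C\cdot 2^{-k}$ for some absolute constant $C$; monotonicity of $\Vert Du\Vert$ preserves $\Vert Du\Vert(G_k)\ge\eps_0$. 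Setting $H_k:=\bigcup_{j\ge k}G_j$ and $G:=\bigcap_k H_k$, countable subadditivity of $\capa_1$ yields $\capa_1(G)=0$; by \eqref{eq:null sets of Hausdorff measure and capacity} this forces $\cH(G)=0$, and directly from the definition $\cH=\lim_{R\to 0}\cH_R$ one also obtains $\mu(G)=0$.

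The heart of the argument is to conclude $\Vert Du\Vert(G)=0$. Via the decomposition \eqref{eq:variation measure decomposition}, the absolutely continuous part vanishes because $\mu(G)=0$, and the jump part vanishes because $\cH(G)=0$ wipes out the $\cH$-integral against $S_u$; for the Cantor part I would invoke the standard fact from the metric $\BV$ theory that $\Vert Du\Vert^c$ vanishes on $\cH$-$\sigma$-finite sets, in particular on $\cH$-null ones. (Alternatively, one can bypass the Cantor part entirely by applying the coarea formula and using $\theta_{\{u>t\}}\le C_d$ from \eqref{eq:def of theta} to bound each slice by $C_d\cH(G)=0$.) Once $\Vert Du\Vert(G)=0$ is in hand, continuity from above for the finite Radon measure $\Vert Du\Vert$ on $\Om$ along the decreasing open family $(H_k)$ gives $\Vert Du\Vert(G)=\lim_k\Vert Du\Vert(H_k)\ge\eps_0$, the desired contradiction.

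The main obstacle is precisely this middle step: establishing that the variation measure annihilates $1$-capacity-null sets, which requires nontrivial structural information about $\Vert Du\Vert$ (chiefly the behaviour of the Cantor part, or the availability of a suitable coarea formula). The outer-regularity reduction to open sets and the closing continuity-from-above argument are then essentially routine capacity and measure-theoretic bookkeeping.
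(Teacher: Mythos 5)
Your argument is correct in substance. The paper itself only cites \cite[Lemma 3.8]{L-SA} without reproducing a proof, so a line-by-line comparison is not possible here; but the route you take---reduce the $\eps$--$\delta$ statement to the null-set statement ``$\capa_1(G)=0\Rightarrow\Vert Du\Vert(G)=0$'' by contradiction plus continuity from above, then establish the null-set statement via the coarea formula, \eqref{eq:def of theta}, and \eqref{eq:null sets of Hausdorff measure and capacity}---is exactly the natural one for this kind of absolute-continuity claim in the metric $\BV$ theory.

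Two comments. First, the appeal to quasicontinuity and the lattice property to pass from $A_k$ to open $G_k$ is superfluous: that $\capa_1$ is an \emph{outer} capacity, $\capa_1(A)=\inf\{\capa_1(G):A\subset G\text{ open}\}$, is a standing fact (\cite[Theorem 5.31]{BB}) and immediately gives open $G_k\subset\Om$ with $A_k\subset G_k$ and $\capa_1(G_k)<2^{-k+1}$, while the definition \eqref{eq:definition of Du in arbitrary set} of $\Vert Du\Vert$ on arbitrary sets preserves $\Vert Du\Vert(G_k)\ge\eps_0$ automatically. Second, the place where you hedge---``the standard fact that $\Vert Du\Vert^c$ vanishes on $\cH$-$\sigma$-finite sets''---is indeed the one nontrivial input; it is a theorem in $\R^n$ (\cite[Prop.\ 3.92]{AFP}) and its metric-space analogue should be cited rather than treated as folklore. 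Your parenthetical coarea alternative is the cleaner choice. To avoid even invoking a Borel-set coarea formula, apply Miranda's open-set coarea identity to each $H_k$, pass to the limit using continuity from above in each Radon measure $\Vert D\ch_{\{u>t\}}\Vert$ and dominated convergence in $t$ (the dominating function $t\mapsto\Vert D\ch_{\{u>t\}}\Vert(\Om)$ is integrable by the coarea formula on $\Om$), and finish with $\Vert D\ch_{\{u>t\}}\Vert(G)\le C_d\,\cH(\partial^*\{u>t\}\cap G)\le C_d\,\cH(G)=0$ from \eqref{eq:def of theta}, valid for a.e.\ $t$ since $\{u>t\}$ has finite perimeter for a.e.\ $t$.
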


The following proposition describes the weak* convergence of the variation measure;
recall that we understand the expression $\Vert Du\Vert(U)<\infty$ to mean that
there exists some open set $\Om\supset U$
such that
$u\in L^1_{\loc}(\Om)$ and $\Vert Du\Vert(\Om)<\infty$.

\begin{proposition}[{\cite[Proposition 3.9]{L-Leib}}]\label{prop:weak convergence with quasisemicontinuous test function}
	Let $U\subset X$ be $1$-quasiopen.
	If $\Vert Du\Vert(U)<\infty$ and $u_i\to u$ in $L^1_{\loc}(U)$
	such that
	\[
	\Vert Du\Vert(U)= \lim_{i\to\infty}\int_U g_{u_i}\,d\mu,
	\]
	where each $g_{u_i}$ is the minimal $1$-weak upper gradient of $u_i$ in $U$, then
	\[
	\int_U \eta\,d\Vert Du\Vert\ge \limsup_{i\to\infty}\int_U \eta g_{u_i}\,d\mu
	\]
	for every nonnegative bounded $1$-quasi upper semicontinuous function $\eta$ on $U$.
\end{proposition}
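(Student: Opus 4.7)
\medskip

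\textbf{Plan.} The natural route is to dualize to a quasi lower semicontinuous test function, then strip off a small-capacity bad set, then reduce to a Lipschitz test function via monotone approximation, and finally handle the Lipschitz case by a Leibniz argument combined with the $L^1_{\loc}$-lower semicontinuity of the total variation.

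\emph{Step 1: Duality reduction.} Let $M\ge\sup_U\eta<\infty$. Then $\psi:=M-\eta$ is nonnegative, bounded by $M$, and $1$-quasi \emph{lower} semicontinuous on $U$. Using the hypothesis $\|Du\|(U)=\lim_i\int_U g_{u_i}\,d\mu$, the claimed inequality is equivalent to
\[
\int_U\psi\,d\|Du\|\le \liminf_i\int_U\psi g_{u_i}\,d\mu,
\]
so it suffices to prove the lower semicontinuous analogue for $\psi$.

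\emph{Step 2: Excising an exceptional set.} Fix $\eps>0$. By Lemma \ref{lem:variation measure and capacity} there exists $\delta>0$ with $\capa_1(A)<\delta\Rightarrow\|Du\|(A)<\eps$. By quasi lower semicontinuity of $\psi$ we can pick an open $G\subset X$ with $\capa_1(G)<\delta$ such that $\psi|_{U\setminus G}$ is lower semicontinuous. The mass of the unwanted piece is controlled: $\int_G\psi\,d\|Du\|\le M\eps$, and on the $g_{u_i}$ side we only need that $\int_G g_{u_i}\,d\mu$ is bounded, which follows from $\int_U g_{u_i}\,d\mu\to\|Du\|(U)$.

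\emph{Step 3: Lipschitz approximation.} On $U\setminus G$ the bounded lower semicontinuous function $\psi$ is the pointwise increasing limit of the McShane-type Lipschitz approximants $\psi_k(x):=\min\!\bigl(M,\inf_{y\in U\setminus G}(\psi(y)+k\,d(x,y))\bigr)$, which are $k$-Lipschitz on $X$, bounded by $M$, and satisfy $\psi_k\nearrow\psi$ on $U\setminus G$. By monotone convergence the left-hand side (restricted to $U\setminus G$) is $\sup_k\int_{U\setminus G}\psi_k\,d\|Du\|$, while on the right-hand side $\int_U\psi g_{u_i}\,d\mu\ge \int_{U\setminus G}\psi_k g_{u_i}\,d\mu$ for each $k$. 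Thus it suffices to prove the inequality with $\psi$ replaced by each fixed Lipschitz $\psi_k$, and to combine this with the error $O(M\eps)$ coming from $G$.

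\emph{Step 4: The Lipschitz case and main obstacle.} For a fixed nonnegative $L$-Lipschitz $\psi_k$ bounded by $M$, apply the Leibniz-type upper gradient inequality $g_{\psi_k u_i,U}\le \psi_k g_{u_i,U}+L|u_i|$ together with Theorem \ref{thm:lower semic in quasiopen sets} applied to $\psi_k u_i\to\psi_k u$ in $L^1_{\loc}(U)$:
\[
\|D(\psi_k u)\|(U)\le \liminf_i\int_U\psi_k g_{u_i}\,d\mu+L\int_U|u|\,d\mu.
\]
The matching \emph{lower} bound $\int_U\psi_k\,d\|Du\|\le \|D(\psi_k u)\|(U)+L\int_U|u|\,d\mu$ is the genuinely nontrivial input; morally it expresses the distributional identity $\psi_k\,Du=D(\psi_k u)-u\,D\psi_k$, but in the quasiopen metric setting it must be extracted by writing $\psi_k=\psi_k-\psi_k(x_0)$ on small balls, mollifying (via an optimal sequence supplied by Lemma \ref{lem:optimal sequence in quasiopen sets}), and patching with a partition of unity. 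This is the main obstacle; once established, letting $L$ stay fixed but replacing $\psi_k$ by $\psi_k/N$ and rescaling, or more directly feeding the bound into Steps 1--3 and sending $\eps\to 0$, yields the proposition.
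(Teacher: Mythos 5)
The paper does not reprove this proposition—it is imported verbatim from \cite{L-Leib}—so there is no internal proof to compare against; I assess your argument on its own terms.

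Your Steps 1--3 are basically sound: passing to the quasi lower semicontinuous $\psi=M-\eta$, excising a small-capacity set $G$ so that $\psi|_{U\setminus G}$ is lower semicontinuous (with an $O(M\eps)$ error controlled by Lemma \ref{lem:variation measure and capacity}), and increasing McShane approximation by $k$-Lipschitz $\psi_k\nearrow\psi$ on $U\setminus G$ all work. The problem is Step~4, and it is fatal. First, you never actually prove the ``matching lower bound'' $\int_U\psi_k\,d\|Du\|\le\|D(\psi_k u)\|(U)+L\int_U|u|\,d\mu$; you label it ``the genuinely nontrivial input'' and stop. That is an admitted gap. Moreover, this is a Leibniz-type estimate for $\BV$ functions, and the reference from which the proposition is quoted—\cite{L-Leib}—is precisely the paper that proves the sharp Leibniz rule, using this proposition as an ingredient. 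Appealing to such an estimate here is circular. Second, and more fundamentally, even granting both Leibniz bounds the argument cannot close: chaining them gives $\int_U\psi_k\,d\|Du\|\le\liminf_i\int_U\psi_k g_{u_i}\,d\mu+2L\int_U|u|\,d\mu$. The error $2L\int_U|u|\,d\mu$ does not vanish; in the McShane scheme $L=k$, so sending $k\to\infty$ to recover $\psi$ makes it diverge, and the rescaling $\psi_k\mapsto\psi_k/N$ is homogeneous of degree one on both sides and gains nothing. (There is also the smaller issue that you quietly replaced $\int_U|u_i|$ by $\int_U|u|$, which $L^1_{\loc}$-convergence does not justify, and that $\int_U|u|\,d\mu$ may be infinite since $u$ is only locally integrable.)

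The intended argument is far more direct and needs none of Steps 2--4. After Step~1, for $\psi:=M-\eta$ quasi lower semicontinuous with $0\le\psi\le M$, the superlevel sets $\{\psi>t\}$ are $1$-quasiopen (see the remark after Lemma \ref{lem:characteristic function of quasiopen set}). The layer-cake formula gives $\int_U\psi g_{u_i}\,d\mu=\int_0^M\int_{U\cap\{\psi>t\}}g_{u_i}\,d\mu\,dt$, and similarly for $d\|Du\|$. Apply Fatou in $t$, and for each fixed $t$ use Theorem \ref{thm:characterization of total variational} (equivalently Theorem \ref{thm:lower semic in quasiopen sets}) in the $1$-quasiopen set $V_t:=U\cap\{\psi>t\}$: since $u_i\to u$ in $L^1_{\loc}(V_t)$, since $\|Du\|(V_t)\le\|Du\|(U)<\infty$, and since $g_{u_i,V_t}\le g_{u_i}$ a.e.\ in $V_t$, one obtains $\|Du\|(V_t)\le\liminf_i\int_{V_t}g_{u_i}\,d\mu$. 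Integrating in $t$ gives $\liminf_i\int_U\psi g_{u_i}\,d\mu\ge\int_U\psi\,d\|Du\|$, and subtracting from $M\|Du\|(U)=M\lim_i\int_U g_{u_i}\,d\mu$ yields the stated inequality for $\eta$. No excision, no Lipschitz approximation, and no Leibniz rule are required.
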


Recall the definition of $1$-quasi upper semicontinuity:
for every $\eps>0$ there is an open set $G\subset X$ such that $\capa_1(G)<\eps$
and $\eta|_{U\setminus G}$ is finite and upper semicontinuous.

The following lemma will be applied later to functions $\eta$ that form a partition
of unity in a $1$-quasiopen set.
Recall that we understand $N^{1,1}_0(H)$ to be a subspace of $N^{1,1}(X)$.

\begin{lemma}\label{lem:zero extension of Sobolev function}
	Let $H\subset X$ be $\mu$-measurable, let $u\in N^{1,1}(H)$ be bounded,
	and let $\eta\in N^{1,1}_0(H)$
	with $0\le\eta\le 1$ on $X$.
	Then $\eta u\in N^{1,1}_0(H)$ with a $1$-weak upper gradient
	$\eta g_{u}+|u|g_{\eta}$  (in $X$, with the interpretation that an undefined function
	times zero is zero).
\end{lemma}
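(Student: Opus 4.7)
The plan is as follows. Define $w:=\eta u$ on $H$, extended by $0$ to $X\setminus H$; this is well-defined because $\eta=0$ pointwise on $X\setminus H$ (by the definition of $N^{1,1}_0(H)$), matching the ``undefined times zero is zero'' convention. Similarly set $\rho:=\eta g_u+|u|g_\eta$ on $H$ and $\rho:=0$ on $X\setminus H$. Since $0\le\eta\le 1$ and $M:=\|u\|_{L^{\infty}(H)}<\infty$, both $w$ and $\rho$ lie in $L^1(X)$. It then suffices to verify that $\rho$ is a $1$-weak upper gradient of $w$ on $X$, for then $w\in N^{1,1}(X)$ and $w=0$ on $X\setminus H$ places $w\in N^{1,1}_0(H)$.

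Fix a curve $\gamma:[0,\ell]\to X$ outside the following exceptional family of zero $1$-modulus in $X$: curves on which $\eta$ is not continuous or $g_\eta$ fails to satisfy the upper gradient inequality; curves with $\int_\gamma\rho\,ds=\infty$; and curves having some subcurve contained in $H$ on which $g_u$ fails the upper gradient inequality for $u$. The third family has zero $1$-modulus in $H$, hence also in $X$ after extending a test function by zero; a standard modulus argument lets us simultaneously exclude all bad subcurves. Set $f(t):=\eta(\gamma(t))$ and $W(t):=w(\gamma(t))$. Then $f$ is continuous on $[0,\ell]$, and because $|W|\le Mf$ with $W=fu(\gamma)$ wherever $f>0$, $W$ is continuous on $[0,\ell]$ and vanishes at every zero of $f$ (in particular on $\gamma^{-1}(X\setminus H)$).

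To bound $|W(0)-W(\ell)|$, split by cases on the zero set of $f$. If $f>0$ on all of $[0,\ell]$, compactness gives $\inf f>0$, the whole curve lies in $H$, and the standard Leibniz rule applied to $\eta|_H,u\in N^{1,1}(H)$ along $\gamma$ directly yields $|W(0)-W(\ell)|\le\int_0^\ell\rho(\gamma)\,dt$. If $f(\tau)=0$ for some $\tau\in(0,\ell)$, then $W(\tau)=0$ and $|W(0)-W(\ell)|\le|W(0)|+|W(\ell)|$, reducing to the one-sided case. Finally, suppose $f(0)>0$ but $f$ vanishes at some point of $(0,\ell]$; let $t_1:=\inf\{t\in(0,\ell]:f(t)=0\}$, so $f>0$ on $[0,t_1)$ and $f(t_1)=0$ by continuity. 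For each $\eps\in(0,f(0))$ let $s_\eps:=\inf\{t\in(0,t_1]:f(t)=\eps\}$, which lies in $(0,t_1)$ by the intermediate value theorem; then $f\ge\eps$ on $[0,s_\eps]$, so $\gamma([0,s_\eps])\subset H$, and the Leibniz rule on this subcurve gives $|W(0)-W(s_\eps)|\le\int_0^{s_\eps}\rho(\gamma)\,dt\le\int_0^\ell\rho(\gamma)\,dt$. Since $|W(s_\eps)|\le M\eps\to 0$, letting $\eps\to 0$ yields $|W(0)|\le\int_0^\ell\rho(\gamma)\,dt$.

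The main obstacle is the delicate interplay at $\partial H$ with the pointwise convention: $u$ may oscillate arbitrarily near $\partial H$, and $g_u$ is controlled only along curves lying in $H$. The boundedness of $u$ (which forces $|W|\le Mf$ and hence continuity of $W$ across zeros of $f$) together with the continuity of $\eta$ along $1$-a.e.\ curve (which ensures the $\eps$-approximation converges) are what bridge the gap between the product rule on subcurves strictly inside $H$ and the endpoint values of $W$ on the ambient space $X$.
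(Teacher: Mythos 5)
Your proof is correct in its essentials but takes a genuinely different route from the paper's. The paper assembles three abstract results: it invokes the Leibniz rule \cite[Theorem 2.15]{BB} to get $\eta u\in N^{1,1}(H)$ with the stated $1$-weak upper gradient in $H$; it then sandwiches $-M\eta\le\eta u\le M\eta$ with $M\eta\in N^{1,1}_0(H)$ and cites the squeeze lemma \cite[Lemma 2.37]{BB} to place $\eta u\in N^{1,1}_0(H)$; and finally it applies the gluing result \cite[Proposition 3.10]{BB-OD} (using $g_{\eta u}=g_\eta=0$ in $X\setminus H$ by \eqref{eq:upper gradient in coincidence set}) to upgrade $\eta g_u+|u|g_\eta$ to a $1$-weak upper gradient in all of $X$. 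You instead reprove the gluing step from scratch along $1$-a.e.\ curve; the key ideas you isolate --- the bound $|W|\le M(\eta\circ\gamma)$ forcing $W$ to vanish continuously at zeros of $\eta\circ\gamma$, while the product rule on subcurves lying in $\{\eta\ge\eps\}\subset H$ is pushed to the boundary by letting $\eps\to 0$ --- are precisely the mechanism behind a gluing lemma of the \cite{BB-OD} type, so your argument is more self-contained at the price of rederiving that machinery.

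Two small repairs are needed. First, when $f$ vanishes somewhere in $(0,\ell)$ but $f(0),f(\ell)>0$, you bound each of $|W(0)|$ and $|W(\ell)|$ by $\int_0^\ell\rho(\gamma)\,dt$, which after summing gives a spurious factor of $2$. The fix is already contained in the argument: the one-sided estimate actually yields $|W(0)|\le\int_0^{t_1}\rho(\gamma)\,dt$ (since $s_\eps<t_1$), and by running the curve backwards $|W(\ell)|\le\int_{t_2}^{\ell}\rho(\gamma)\,dt$ with $t_2:=\sup\{t:f(t)=0\}\ge t_1$; these two ranges are disjoint, so the sum is at most $\int_0^\ell\rho(\gamma)\,dt$. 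Second, the exceptional family should exclude curves along which $\eta$ fails to be \emph{absolutely} continuous (not merely continuous), since absolute continuity of $\eta\circ\gamma$ and $u\circ\gamma$ on subcurves in $H$ is what the product-rule step genuinely uses; this is still a zero $1$-modulus family since $g_\eta\in L^1(X)$.
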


\begin{proof}
We have $|u|\le M$ in $H$ for some $M\ge 0$.
By the Leibniz rule, see \cite[Theorem 2.15]{BB}, we know that $\eta u\in N^{1,1}(H)$ with a
$1$-weak upper gradient $\eta g_u+|u|g_\eta$ in $H$.
Moreover, $-M\eta\le \eta u\le M\eta\in N^{1,1}_0(H)$, and so by \cite[Lemma 2.37]{BB} we conclude that
$\eta u\in N_0^{1,1}(H)$, with $g_{\eta u}=0$ in $X\setminus H$ by
\eqref{eq:upper gradient in coincidence set}.
Finally, by \cite[Proposition 3.10]{BB-OD} we know
that $\eta g_u+|u|g_\eta$ (with $g_{\eta}=0$ a.e. in $X\setminus H$
by \eqref{eq:upper gradient in coincidence set}) is a $1$-weak upper gradient of $\eta u$
in $X$.
\end{proof}

Next we observe that convergence in the $\BV$ norm implies the following pointwise convergence; this
follows from \cite[Lemma 4.2]{LaSh}.

\begin{lemma}\label{lem:norm and pointwise convergence}
	Let $u_i,u\in\BV(X)$ with $u_i\to u$ in $\BV(X)$. By passing to a subsequence (not relabeled),
	we have $u_i^{\wedge}\to u^{\wedge}$ and $u_i^{\vee}\to u^{\vee}$ $\mathcal H$-a.e. in $X$.
\end{lemma}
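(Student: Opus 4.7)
The plan is to reduce the statement, via coarea, to the analogous pointwise convergence for sets of finite perimeter, which is precisely the content of \cite[Lemma 4.2]{LaSh}, and then to reconstruct the approximate limits $u_i^\wedge$ and $u_i^\vee$ from those of the corresponding superlevel sets.

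First I would extract a subsequence along which the superlevel sets $\{u_i>t\}$ behave well for a.e.\ $t$. Since $u_i\to u$ in $L^1(X)$, after passing to a subsequence one has $\ch_{\{u_i>t\}}\to \ch_{\{u>t\}}$ in $L^1(X)$ for a.e.\ $t\in\R$, by Cavalieri's principle. Combined with $\Vert Du_i\Vert(X)\to\Vert Du\Vert(X)$, the coarea formula, and level-wise lower semicontinuity of perimeter, a further Fatou argument produces a subsequence along which $\Vert D\ch_{\{u_i>t\}}\Vert(X)\to\Vert D\ch_{\{u>t\}}\Vert(X)$ for a.e.\ $t\in\R$.

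Next, I would fix a countable dense set $D\subset\R$ of such ``good'' levels and invoke \cite[Lemma 4.2]{LaSh} at each $t\in D$: this yields, after a further subsequence, $\mathcal{H}$-a.e.\ pointwise convergence of the relevant measure-theoretic descriptors of the sets $\{u_i>t\}$, namely the indicators of the measure-theoretic interiors $I_{\{u_i>t\}}$ converge $\mathcal{H}$-a.e.\ to that of $I_{\{u>t\}}$, and analogously for the exteriors. A standard diagonal argument then produces a single subsequence along which this holds $\mathcal{H}$-a.e.\ simultaneously for every $t\in D$.

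Finally I would convert the level-set information into the desired pointwise convergence of the approximate limits. By the definitions of $u^\wedge, u^\vee$ and of $I_E,O_E$, one has
\[
u^\wedge(x)=\sup\{t\in D:\,x\in I_{\{u\ge t\}}\}\quad\textrm{and}\quad u^\vee(x)=\inf\{t\in D:\,x\in O_{\{u>t\}}\}
\]
outside an $\mathcal{H}$-null set where the supremum/infimum over the dense countable family $D$ coincides with that over all of $\R$; the same identities hold for each $u_i$. The $\mathcal{H}$-a.e.\ convergence of the level-set indicators from the previous step then forces $u_i^\wedge\to u^\wedge$ and $u_i^\vee\to u^\vee$ $\mathcal{H}$-a.e. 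The main obstacle I expect is controlling the exceptional set in this last step: one must verify that the union over $t\in D$ of the measure-theoretic boundaries $\partial^*\{u>t\}$, together with the points where the $\sup/\inf$ over $D$ fails to realize the true $\sup/\inf$ over $\R$, is $\mathcal{H}$-null; this is exactly the kind of careful coarea-type bookkeeping carried out in \cite{LaSh}.
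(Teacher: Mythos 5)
Your argument is correct, but it supplies far more machinery than the paper does: the paper's entire proof is the remark that the lemma ``follows from \cite[Lemma 4.2]{LaSh},'' whereas you have reconstructed that reduction in detail---passing via coarea to the set-of-finite-perimeter analogue and then reassembling $u^\wedge,u^\vee$ from the level-set data. The steps hold up. Cavalieri gives a subsequence with $\ch_{\{u_i>t\}}\to\ch_{\{u>t\}}$ in $L^1(X)$ for a.e.\ $t$; writing $f_i(t)=\|D\ch_{\{u_i>t\}}\|(X)$ and $f(t)=\|D\ch_{\{u>t\}}\|(X)$, level-wise lower semicontinuity gives $(f_i-f)_-\to 0$ a.e., the domination $(f_i-f)_-\le f\in L^1(\R)$ and coarea with $\|Du_i\|(X)\to\|Du\|(X)$ then give $f_i\to f$ in $L^1(\R)$, hence a.e.\ along a further subsequence; diagonal extraction over a countable dense set $D$ of such good levels, together with the identities $u^\wedge(x)=\sup\{t\in D:x\in I_{\{u\ge t\}}\}$ and $u^\vee(x)=\inf\{t\in D:x\in O_{\{u>t\}}\}$, transfers the $\mathcal H$-a.e.\ set-level convergence to the approximate limits. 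Two refinements should be recorded. First, choose $D$ to avoid the countably many levels $t$ with $\mu(\{u=t\})>0$ or $\mu(\{u_i=t\})>0$ for some $i$, so that $I_{\{u\ge t\}}=I_{\{u>t\}}$ (and likewise for each $u_i$) at the levels you actually control via the coarea step. Second, the concern you raise at the end about an extra exceptional set is unfounded: $\{t:x\in I_{\{u\ge t\}}\}$ is a down-set and $\{t:x\in O_{\{u>t\}}\}$ an up-set in $t$, so the sup/inf over the dense countable $D$ coincides with that over $\R$ for \emph{every} $x$; the only $\mathcal H$-null set to discard is the countable union over $t\in D$ of the exceptional sets delivered by \cite[Lemma 4.2]{LaSh}.
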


We have the following result for $\BV$ functions whose variation
measure has no singular part; recall the decomposition
\eqref{eq:variation measure decomposition}.

\begin{theorem}\label{thm:BV with only abs cont part}
Let $\Om\subset X$ be open and let $v\in L^1_{\loc}(\Om)$ with
$\Vert Dv\Vert(\Om)<\infty$ and $\Vert Dv\Vert^s(U)=0$ for a
$\mu$-measurable set $U\subset\Om$.
Then a modification $\widehat{v}$ of $v$ in a $\mu$-negligible subset of
$U$ satisfies $\widehat{v}\in N^{1,1}_{\loc}(U)$ such that for every $\mu$-measurable
$H\subset U$,
\[
\int_{H}g_{\widehat{v}}\,d\mu\le C_0\Vert Dv\Vert(H)
\]
where $g_{\widehat{v}}$ is the minimal $1$-weak upper gradient of $\widehat{v}$ in $U$ and
$C_0\ge 1$ is a constant depending only on the doubling constant of $\mu$
and the constants in the Poincar\'e inequality.
\end{theorem}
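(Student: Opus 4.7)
The plan is to approximate $v$ by Lipschitz functions via Lemma~\ref{lem:L1 loc and L1 convergence}, extract a weak $L^1$ limit of their minimal upper gradients on $U$, and use the absolute continuity of $\Vert Dv\Vert$ on $U$ to identify this limit as a $1$-weak upper gradient of a $\mu$-a.e.\ modification $\widehat v$ of $v$.

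Since the conclusion is local in $U$, I would first fix a ball $B$ with $\overline{4\lambda B}\subset\Omega$ and prove the statement for Borel $H\subset B\cap U$; a countable cover of $U$ by such balls together with the uniqueness property in~\eqref{eq:ae to qe} then patches the local representatives into a single $\widehat v$ on all of $U$. On an auxiliary open $\Omega'$ with $4\lambda B\Subset\Omega'\Subset\Omega$, Lemma~\ref{lem:L1 loc and L1 convergence} yields $v_i\in\liploc(\Omega')$ with $v_i-v\to 0$ in $L^1(\Omega')$ and $\int_{\Omega'}g_{v_i}\,d\mu\to\Vert Dv\Vert(\Omega')$. Lower semicontinuity on open sets combined with this convergence of total masses implies that $g_{v_i}\mu$ converges weakly$^*$ to $\Vert Dv\Vert$ as Radon measures on $\Omega'$.

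The crucial step is to upgrade this measure convergence to equi-integrability of $(g_{v_i})$ on $B\cap U$. Since $\Vert Dv\Vert^s(U)=0$, the restriction of $\Vert Dv\Vert$ to $U$ is absolutely continuous with respect to $\mu$ and equals $a\,\mu$ for some $a\in L^1_{\loc}(U)$; Lemma~\ref{lem:variation measure and capacity} gives the corresponding uniform absolute continuity. Applying Proposition~\ref{prop:weak convergence with quasisemicontinuous test function} with $1$-quasi upper semicontinuous indicator-type test functions (approximating Borel subsets of $B\cap U$ from within by closed, hence QSC, sets), and using the Poincar\'e inequality to enlarge balls of radius $r$ to balls of radius $\lambda r$ in a Vitali-type covering, one deduces equi-integrability of $(g_{v_i})$ on $B\cap U$ with a constant $C_0$ depending only on doubling and Poincar\'e. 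Dunford--Pettis then yields a subsequence with $g_{v_i}\rightharpoonup g^*$ weakly in $L^1(B\cap U)$ satisfying $\int_H g^*\,d\mu\le C_0\Vert Dv\Vert(H)$ for every Borel $H\subset B\cap U$.

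Finally, Mazur's lemma produces convex combinations $\widetilde v_j$ whose upper gradients $\widetilde g_j$ converge to $g^*$ \emph{strongly} in $L^1(B\cap U)$, while $\widetilde v_j\to v$ in $L^1(\Omega')$; passing to a $\mu$-a.e.\ convergent subsequence, Fuglede's lemma identifies $g^*$ as a $1$-weak upper gradient on $B\cap U$ of the pointwise limit $\widehat v$, which is a $\mu$-a.e.\ modification of $v$. Since $g_{\widehat v}\le g^*$, the desired inequality follows. The main obstacle is the equi-integrability step: turning the measure-level hypothesis $\Vert Dv\Vert^s(U)=0$ into a functional control on the upper gradients $g_{v_i}$ restricted to $U$ while incurring only the structural constant $C_0$---this is where the Poincar\'e dilation $\lambda$ makes $C_0>1$ generally unavoidable.
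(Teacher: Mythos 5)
The paper's own argument is a short reduction to \cite[Theorem 4.6]{HKLL}: that result gives the statement for $v\in\BV(\Om)$, and one passes to $v\in L^1_{\loc}(\Om)$ by exhausting $\Om$ with relatively compact open subsets, using \eqref{eq:ae to qe} and \eqref{eq:quasieverywhere equivalence classes} to see that the local representatives patch consistently. Your proposal is a genuinely different, from-scratch argument, but it has a gap at precisely the place you flag as ``the crucial step''. Lemma~\ref{lem:L1 loc and L1 convergence} produces \emph{some} sequence $v_i\in\liploc(\Om')$ with $v_i-v\to 0$ in $L^1(\Om')$ and $\int_{\Om'}g_{v_i}\,d\mu\to\Vert Dv\Vert(\Om')$, and from this you correctly infer $g_{v_i}\mu\rightharpoonup^{*}\Vert Dv\Vert$ on $\Om'$. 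But these facts together with $\Vert Dv\Vert^s(U)=0$ do \emph{not} yield equi-integrability of $(g_{v_i})$ on $U$, regardless of which quasi-upper-semicontinuous test functions, Vitali coverings or Poincar\'e dilations you invoke: equi-integrability is a feature of the particular sequence, not of its measure limit. A one-dimensional counterexample: let $\Om'=U=(0,1)\subset\R$, $v(x)=x$, and $v_i(x)=\int_0^x a_i$ with $a_i:=\sum_{k=1}^{i} i\,\ch_{[(k-1)/i,\,(k-1)/i+1/i^2]}$. Then $v_i$ is Lipschitz with $g_{v_i}=a_i$, $0\le v_i\le 1$, $v_i\to v$ uniformly and hence in $L^1$, and $\int_0^1 g_{v_i}\,d\mu=1=\Vert Dv\Vert((0,1))$ with $\Vert Dv\Vert$ absolutely continuous, yet $g_{v_i}$ carries unit mass on a set of measure $1/i\to 0$, so $(g_{v_i})$ has no weakly $L^1$-convergent subsequence.

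What rescues the argument in \cite[Theorem 4.6]{HKLL} (following \cite[Lemma 6]{FHK}) is that one does not take a generic approximating sequence: one takes \emph{discrete convolutions}, whose minimal upper gradients satisfy the pointwise maximal-function bound recalled in Remark~\ref{rmk:discrete convolutions}; that is, $g_{v_i}$ is dominated by a discretised restricted maximal function of $\Vert Dv\Vert$ over dilated Whitney balls. It is this structural estimate, and not the weak$^{*}$ convergence $g_{v_i}\mu\rightharpoonup^{*}\Vert Dv\Vert$, that converts absolute continuity of $\Vert Dv\Vert$ on $U$ into uniform integrability of $(g_{v_i})$ on $U$, and it is also exactly where the constant $C_0>1$ originates. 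Once you are forced to build discrete convolutions anyway, you have essentially re-proved \cite[Theorem 4.6]{HKLL}, and the paper's citation-plus-exhaustion route is the efficient one. Your Mazur--Fuglede identification at the end is standard and sound; the defect sits entirely upstream of it.
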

\begin{proof}
This result is given in \cite[Theorem 4.6]{HKLL}, except that there
it is assumed that $v\in\BV(\Om)$ (that is, $v$ is in $L^1(\Om)$ and not just in
$L^1_{\loc}(\Om)$). However, by exhausting $\Om$ with relatively compact open
sets and applying \cite[Theorem 4.6]{HKLL} in these sets,
we obtain the result (note that by \eqref{eq:ae to qe}
and \eqref{eq:quasieverywhere equivalence classes}
we know that we do
not need to keep redefining $\widehat{v}$ in this construction).
\end{proof}

Finally, we have the following two simple results for $1$-quasiopen sets.

\begin{lemma}\label{lem:characteristic function of quasiopen set}
 Let $U\subset X$ be $1$-quasiopen. Then $\ch_{U}$ is $1$-quasi
lower semicontinuous.
\end{lemma}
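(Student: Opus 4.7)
The plan is to unwind the definitions: we need, for any $\eps>0$, an open set $G\subset X$ with $\capa_1(G)<\eps$ such that $\ch_U|_{X\setminus G}$ is finite (automatic, since it only takes values $0$ and $1$) and lower semicontinuous on $X\setminus G$ (with its relative topology).

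First I would invoke the definition of $1$-quasiopen directly: given $\eps>0$, choose an open set $G\subset X$ with $\capa_1(G)<\eps$ such that $U\cup G$ is open in $X$. This $G$ will be the exceptional set for the quasi lower semicontinuity.

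Next I would verify lower semicontinuity of $\ch_U$ on $X\setminus G$ by checking super-level sets. The only nontrivial case is $0\le t<1$, where
\[
\{x\in X\setminus G:\ch_U(x)>t\}\ =\ U\cap(X\setminus G)\ =\ (U\cup G)\cap(X\setminus G),
\]
which is the intersection of the open set $U\cup G$ with $X\setminus G$, hence open in the relative topology of $X\setminus G$. The cases $t<0$ and $t\ge 1$ give $X\setminus G$ and $\emptyset$ respectively, both trivially open.

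There is no real obstacle here; the argument is essentially a one-line reformulation of the definition of $1$-quasiopenness, using the identity $U\cap(X\setminus G)=(U\cup G)\cap(X\setminus G)$ to transfer openness of $U\cup G$ to relative openness of $U$ in $X\setminus G$.
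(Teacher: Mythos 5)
Your proof is correct and takes essentially the same approach as the paper: both use the defining open set $G$ with $\capa_1(G)<\eps$ and $U\cup G$ open, then observe that $U$ is relatively open in $X\setminus G$, which gives lower semicontinuity of $\ch_U|_{X\setminus G}$. You merely spell out the super-level set check that the paper leaves implicit.
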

\begin{proof}
Let $\eps>0$. We find an open set $G\subset X$ such that
$\capa_1(G)<\eps$ and $U\cup G$ is open.
Thus $U$ is open in the subspace topology of $X\setminus G$, and so
$\ch_{U}|_{X\setminus G}$ is lower semicontinuous.
\end{proof}

Conversely, it is easy to see that the super-level sets $\{u>t\}$, $t\in\R$,
of a $1$-quasi lower semicontinuous function $u$ are $1$-quasiopen;
see e.g. the proof of \cite[Proposition 3.4]{BBM}. We will use this
fact, or its analog for $1$-quasi (upper semi-)continuous functions,
without further notice.

\begin{lemma}\label{lem:absolute continuity}
	Let $U\subset X$ be $1$-quasiopen,
	let $v\in N^{1,1}(U)$ with $\Vert Dv\Vert(U)<\infty$, and let
	$A\subset U$ with $\mu(A)=0$. Then $\Vert Dv\Vert(A)=0$.
\end{lemma}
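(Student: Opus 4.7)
The plan is to prove, for each $\eps>0$, the existence of an open set $W\supset A$ with $\Vert Dv\Vert(W)<\eps$; this suffices by the definition $\Vert Dv\Vert(A)=\inf\{\Vert Dv\Vert(W):A\subset W,\,W\text{ open}\}$. Under the paper's convention, the hypothesis $\Vert Dv\Vert(U)<\infty$ supplies an open set $\Omega\supset U$ on which $v\in L^1_{\loc}(\Omega)$ and $\Vert Dv\Vert(\Omega)<\infty$, so $\Vert Dv\Vert$ is a Radon measure on $\Omega$. Three ingredients will feed the construction of $W$: (i)~Lemma~\ref{lem:variation measure and capacity} yields $\delta>0$ such that every $E\subset\Omega$ with $\capa_1(E)<\delta$ satisfies $\Vert Dv\Vert(E)<\eps/2$; (ii)~quasi-openness of $U$ produces an open $G\subset X$ with $\capa_1(G)<\delta$ and $U\cup G$ open; (iii)~since $v\in N^{1,1}(U)$ forces $g_v\in L^1(U)$, absolute continuity of the Lebesgue integral together with $\mu(A)=0$ produces an open $W'\subset X$ with $A\subset W'$ and $\int_{W'\cap U}g_v\,d\mu<\eps/2$.

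Now set $W:=W'\cap(U\cup G)\cap\Omega$, which is open in $X$, contained in $\Omega$, and contains $A$. Since $U\subset\Omega$, this decomposes as $W=(W'\cap U)\cup(W'\cap G\cap\Omega)$. The first piece $W'\cap U$ is quasi-open (the intersection of an open and a quasi-open set is quasi-open, as one checks by combining the two small-capacity ``bad'' sets), and $\Vert Dv\Vert(W'\cap U)\le\Vert Dv\Vert(U)<\infty$. Hence Theorem~\ref{thm:characterization of total variational} applies; taking the constant sequence $u_i\equiv v\in N^{1,1}_{\loc}(W'\cap U)$ yields
\[
\Vert Dv\Vert(W'\cap U)\le\int_{W'\cap U}g_v\,d\mu<\eps/2,
\]
using that any $1$-weak upper gradient of $v$ in $U$ restricts to one in $W'\cap U$. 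For the second piece, $\capa_1(W'\cap G\cap\Omega)\le\capa_1(G)<\delta$, so Lemma~\ref{lem:variation measure and capacity} gives $\Vert Dv\Vert(W'\cap G\cap\Omega)<\eps/2$. Subadditivity then yields $\Vert Dv\Vert(W)<\eps$, so $\Vert Dv\Vert(A)\le\Vert Dv\Vert(W)<\eps$, and sending $\eps\to 0$ completes the proof.

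The main technical point is constructing $W$ so that it decomposes into a quasi-open part inside $U$, on which the clean constant-sequence bound from Theorem~\ref{thm:characterization of total variational} is available, plus a part of small capacity, on which Lemma~\ref{lem:variation measure and capacity} controls the variation. The open ``thickening'' $U\cup G$ supplied by quasi-openness is precisely what allows us to intersect the arbitrary open neighborhood $W'$ of $A$ (which may extend outside $U$) with a set close to $U$ while keeping the result open in $X$.
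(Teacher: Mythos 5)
Your proposal is correct, and its core ingredients are the same as the paper's: the absolute continuity of $\int g_v\,d\mu$ with respect to $\mu$ (this is where $v\in N^{1,1}(U)$ enters), plus the quasiopen-set characterisation in Theorem~\ref{thm:characterization of total variational} applied with the constant sequence $u_i\equiv v$ on a quasiopen set of the form $(\text{open})\cap U$. The one place where you take a longer route is in insisting on exhibiting an honest open superset $W$ of $A$ with $\Vert Dv\Vert(W)<\eps$; to do so you must also control $\Vert Dv\Vert$ on the ``thickening'' coming from the small-capacity open set $G$, which forces you to invoke Lemma~\ref{lem:variation measure and capacity}. This extra ingredient is not needed: $\Vert Dv\Vert$, defined on arbitrary subsets of $\Omega$ as the infimum over open supersets, is automatically a monotone outer measure, so $A\subset W_j\cap U$ already gives $\Vert Dv\Vert(A)\le\Vert Dv\Vert(W_j\cap U)$ without exhibiting an open set in between. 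The paper simply picks open $W_j\supset A$ with $\mu(W_j)\to 0$, notes that $W_j\cap U$ is quasiopen, and applies Theorem~\ref{thm:characterization of total variational} to conclude $\Vert Dv\Vert(W_j\cap U)\le\int_{W_j\cap U}g_v\,d\mu\to 0$. So your argument is valid but carries a redundant step (and an unnecessary dependence on Lemma~\ref{lem:variation measure and capacity}); recognising the monotonicity of the outer measure streamlines it to the paper's version.
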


Note that $v\in N^{1,1}(U)$ does not automatically imply $\Vert Dv\Vert(U)<\infty$,
since the latter involves an extension to an open set.

\begin{proof}
	We find open sets $W_j\supset A$, $j\in\N$, such that $\mu(W_j)\to 0$.
	Then the sets $W_j\cap U$ are easily seen to be $1$-quasiopen,
	and so by Theorem \ref{thm:characterization of total variational} we get
	\[
	\Vert Dv\Vert(A)\le \Vert Dv\Vert (W_j\cap U)\le \int_{W_j\cap U}g_{v, W_j\cap U}\,d\mu
	\le \int_{W_j\cap U}g_v\,d\mu
	\to 0\quad \textrm{as }j\to\infty,
	\]
	where $g_v$ is the minimal $1$-weak upper gradient of $v$ in $U$.
\end{proof}

\section{The discrete convolution method}

In this section we study partitions of unity in $1$-quasiopen sets
and then we use these to develop the discrete convolution method
in such sets.
To construct the partitions of unity, we first need suitable cutoff functions in
quasiopen sets. These cannot be taken to be Lipschitz functions,
but we can use Newton-Sobolev functions instead.
The following definition and proposition are analogs of the theory in the
case $1<p<\infty$, which was studied in the metric setting in \cite{BBL-SS}.

\begin{definition}
	A set $A\subset D$ is a \emph{1-strict subset} of $D$ if there is a function
	$\eta\in N_0^{1,1}(D)$ such that $\eta=1$ in $A$.
	
	A countable family $\{U_j\}_{j=1}^{\infty}$ of $1$-quasiopen sets is a \emph{quasicovering} of a $1$-quasiopen set $U$ if
	$\bigcup_{j=1}^{\infty}U_j\subset U$ and
	$\capa_1\left(U\setminus \bigcup_{j=1}^{\infty}U_j\right)=0$.
	If every $U_j$ is a $1$-finely open $1$-strict subset of $U$ and
	$\overline{U_j}\Subset U$, then $\{U_j\}_{j=1}^{\infty}$ is a
	\emph{1-strict quasicovering} of $U$.
\end{definition}

\begin{proposition}[{\cite[Proposition 5.4]{L-NC}}]\label{prop:existence of strict quasicovering}
	If $U\subset X$ is $1$-quasiopen, then there exists a $1$-strict quasicovering
	$\{U_j\}_{j=1}^{\infty}$ of  $U$.
	Moreover, the associated Newton-Sobolev functions can be chosen compactly supported in $U$.
\end{proposition}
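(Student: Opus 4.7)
The plan is to build each $U_j$ as the super-level set $\{\eta_j>1/2\}$ of a suitable Newton-Sobolev function $\eta_j\in N^{1,1}(X)$ with $0\le\eta_j\le 1$, compact support, and vanishing $1$-q.e.\ on $X\setminus U$, and then to extract a countable family via separability of $X$. Once such $\eta_j$'s are produced, each $U_j$ is $1$-finely open (super-level set of a $1$-quasicontinuous, hence $1$-finely continuous, function by~\eqref{eq:quasicontinuous on quasiopen set} and Theorem~\ref{thm:fine continuity and quasicontinuity equivalence}) and it is a $1$-strict subset of $U$ with witness $\min\{2\eta_j,1\}\in N_0^{1,1}(U)$, after redefining $\eta_j$ pointwise on $X\setminus U$ via~\eqref{eq:quasieverywhere equivalence classes}.

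As a preliminary reduction, by Theorem~\ref{thm:finely open is quasiopen and vice versa} I would write $U=V\cup N$ with $V$ $1$-finely open and $\mathcal H(N)=0$; by~\eqref{eq:null sets of Hausdorff measure and capacity} also $\capa_1(N)=0$, so it suffices to produce the quasicovering inside $V$. For each $x_0\in V$, the $1$-thinness of $X\setminus V$ at $x_0$ yields, for every sufficiently small $r>0$, a test function $\psi_r\in N_0^{1,1}(B(x_0,2r))$ with $0\le\psi_r\le 1$, $\psi_r=1$ on $B(x_0,r)\setminus V$, and arbitrarily small $\int_X g_{\psi_r}\,d\mu$. Summing such potentials on a dyadic sequence of radii and truncating produces a single $\tilde\psi\in N_0^{1,1}(B(x_0,2r))$ with $\tilde\psi=1$ $1$-q.e.\ on the full shell $B(x_0,2r)\setminus V$. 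Combined with a Lipschitz cutoff $\varphi$ supported in $B(x_0,3r/2)$ and equal to one on $B(x_0,r)$, the function $\eta_{x_0,r}:=(\varphi-\tilde\psi)_+$ is supported in $\overline{B(x_0,3r/2)}$ (compact by properness of $X$ once $r$ is small), vanishes $1$-q.e.\ on $X\setminus V$, and equals $1$ on the $1$-fine neighborhood $\{x\in B(x_0,r):\tilde\psi(x)=0\}$ of $x_0$.

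To extract a countable family, I would use separability of the proper space $X$: pick a countable dense sequence $\{x_k\}\subset X$ and rationals $q_m\downarrow 0$, and include $U_{k,m}:=\{\eta_{x_k,q_m}>1/2\}$ whenever $x_k\in V$. The main obstacle is verifying
\[
\capa_1\Bigl(V\setminus\bigcup_{k,m}U_{k,m}\Bigr)=0
\]
together with $\overline{U_{k,m}}\Subset U$. Density of $\{x_k\}$ plus~\eqref{eq:thinness and measure thinness} easily shows the leftover has $\mu$-measure zero, but lifting this to capacity zero requires a Choquet-type inner regularity argument for $\capa_1$: trim the leftover to a compact piece $K$, bound $\capa_1(K)$ by the small variational capacities of the $\tilde\psi$'s, and absorb $K$ into a new cutoff added to the enumeration. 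The closure requirement $\overline{U_{k,m}}\Subset U$ is the subtler point, because the metric closure of a super-level set of a merely $1$-quasicontinuous function may escape $V$; such escape occurs only on a set of arbitrarily small capacity, which is again absorbed into subsequent $U_j$'s by iteration. This final absorption/extraction step is the real technical difficulty.
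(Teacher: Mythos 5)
The paper does not prove this proposition; it is cited directly from \cite[Proposition 5.4]{L-NC}, so there is no internal proof to compare your attempt against. Judged on its own merits, your proposal is a plausible outline of the kind of argument one expects (reduce to the $1$-finely open part via Theorem~\ref{thm:finely open is quasiopen and vice versa} and \eqref{eq:null sets of Hausdorff measure and capacity}, construct Newton--Sobolev cutoffs from the $1$-thinness of the complement, and extract a countable subfamily by separability), and that is indeed the general shape of the argument in the literature, both in the $p>1$ case \cite{BBL-SS} and in \cite{L-NC}. But as written it is not a proof, for two reasons that go beyond your own candid admission that ``the final absorption/extraction step is the real technical difficulty.''

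First, the cutoff construction is not closed up. The test function $\psi_r$ furnished by $1$-thinness vanishes outside $B(x_0,2r)$ and equals $1$ only on $B(x_0,r)\setminus V$, so a dyadic sum over \emph{decreasing} radii $r_j\downarrow 0$ produces a function equal to $1$ on $B(x_0,r_0)\setminus V$ at best, never on the larger annulus where your Lipschitz cutoff $\varphi$ is strictly between $0$ and $1$. Consequently $(\varphi-\tilde\psi)_+$ need not vanish $1$-q.e.\ off $V$ without realigning the radii, and this must be done while simultaneously ensuring $\sum_j\int_X g_{\psi_{r_j}}\,d\mu<\infty$; the $1$-thinness condition controls $r\,\rcapa_1/\mu(B(x_0,r))$, not $\rcapa_1$ itself, so without a quantitative choice of the $r_j$ the series of upper gradient norms may well diverge. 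None of this is fatal, but it has to be done and is not done here.

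Second, and more seriously, the proposition requires a $1$-strict quasicovering, i.e.\ $1$-finely open sets $U_j\subset U$ with $\overline{U_j}\Subset U$ (ordinary metric closure) and $\capa_1\bigl(U\setminus\bigcup_j U_j\bigr)=0$. You correctly identify that closures of super-level sets of merely quasicontinuous functions may leave $V$, and that density of $\{x_k\}$ only controls the $\mu$-measure, not the capacity, of the leftover. Your proposed fix --- ``trim to a compact piece, bound its capacity, absorb it into a new cutoff, iterate'' --- is exactly the delicate content of the Cartan-type property that \cite{L-NC} is devoted to establishing; asserting it as an iteration scheme does not constitute a proof. Until that step is carried out, the proposal is an outline rather than a proof, and it is the part one cannot wave away.
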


We will need $1$-strict quasicoverings with some additional properties.
In the next proposition, we adapt a quasicovering to a given $\BV$ function.
Recall that the class $N_c^{1,1}(U)$ consists of those
functions $u\in N^{1,1}(X)$ that have
compact support in $U$, i.e. $\supp u\Subset U$.

\begin{proposition}\label{prop:existence of suitable strict quasicovering}
Let $U\subset \Om\subset X$ be such that $U$ is $1$-quasiopen and
$\Om$ is open, and let $u\in L^1_{\loc}(\Om)$ with $\Vert Du\Vert(\Om)<\infty$.
Then there exists a $1$-strict quasicovering
	$\{U_j\}_{j=1}^{\infty}$ of $U$,
	and associated Newton-Sobolev functions $\{\rho_j\in N_c^{1,1}(U)\}_{j=1}^{\infty}$
	such that $-\infty<\inf_{\supp \rho_j}u^{\wedge}\le \sup_{\supp \rho_j}u^{\vee}<\infty$ for all $j\in\N$.
\end{proposition}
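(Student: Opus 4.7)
The strategy is to refine the 1-strict quasicovering provided by Proposition~\ref{prop:existence of strict quasicovering} by intersecting $U$ with 1-quasiopen level sets of the pointwise representatives of $u$. By the quasisemicontinuity in Proposition~\ref{prop:quasisemicontinuity}, $u^{\wedge}$ is 1-quasi lower semicontinuous and $u^{\vee}$ is 1-quasi upper semicontinuous on $\Om$. As remarked after Lemma~\ref{lem:characteristic function of quasiopen set}, the super-level sets of a 1-quasi lsc function and the sub-level sets of a 1-quasi usc function are 1-quasiopen, and the intersection of two 1-quasiopen sets is again 1-quasiopen (combine the two exceptional open sets of small capacity). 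Hence for each $n\in\N$,
\[
E_n:=\{u^{\wedge}>-n\}\cap\{u^{\vee}<n\}
\]
is a 1-quasiopen subset of $\Om$, and so is $U\cap E_n$.

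The key observation is that the \emph{finiteness} clause in the definition of 1-quasi semicontinuity forces $\{u^{\wedge}=-\infty\}\cap\Om$ to be contained in open sets of arbitrarily small 1-capacity, and the same holds for $\{u^{\vee}=+\infty\}\cap\Om$; both sets therefore have 1-capacity zero. Consequently $U\setminus\bigcup_{n\in\N}E_n$ has 1-capacity zero.

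Now I apply Proposition~\ref{prop:existence of strict quasicovering} to each 1-quasiopen set $U\cap E_n$ to obtain a 1-strict quasicovering $\{V_{n,j}\}_{j=1}^{\infty}$ of $U\cap E_n$ together with associated Newton-Sobolev functions $\rho_{n,j}$ compactly supported in $U\cap E_n$. I then reindex the countable double-indexed family $\{V_{n,j}\}_{n,j}$ as $\{U_j\}_{j=1}^{\infty}$ with corresponding $\{\rho_j\}$. Each $\supp \rho_{n,j}$ is a compact subset of $U\cap E_n\subset U$, so $\rho_{n,j}\in N_c^{1,1}(U)$, and since $\supp \rho_{n,j}\subset E_n$, we have $-n<u^{\wedge}\le u^{\vee}<n$ on $\supp \rho_{n,j}$, which gives the required finite pointwise bounds. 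Each $V_{n,j}$ is 1-finely open, and the zero extension of $\rho_{n,j}\in N_0^{1,1}(U\cap E_n)$ to $N_0^{1,1}(U)$ shows that $V_{n,j}$ is a 1-strict subset of $U$ with $\overline{V_{n,j}}\Subset U$. Finally, the covering complement satisfies
\[
U\setminus\bigcup_{n,j}V_{n,j}\subset \Big(U\setminus\bigcup_n E_n\Big)\cup\bigcup_{n\in\N}\Big((U\cap E_n)\setminus\bigcup_j V_{n,j}\Big),
\]
which has 1-capacity zero by the previous paragraph and the quasicovering property of each $\{V_{n,j}\}_j$. The only delicate step in the argument is the capacity-zero conclusion for the blow-up sets of $u^{\wedge}$ and $u^{\vee}$; everything else reduces directly to Proposition~\ref{prop:existence of strict quasicovering}.
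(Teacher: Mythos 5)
Your proof is correct and follows essentially the same strategy as the paper: decompose $U$ into the $1$-quasiopen level sets $E_n=\{u^{\wedge}>-n\}\cap\{u^{\vee}<n\}$, apply Proposition~\ref{prop:existence of strict quasicovering} on each piece, and reindex. The one genuine (though small) difference is in how you show that $\bigcup_n E_n$ covers $U$ up to a $\capa_1$-null set: you extract this directly from the \emph{finiteness} clause in the definition of $1$-quasi semicontinuity (via Proposition~\ref{prop:quasisemicontinuity}), which is a clean and self-contained observation, whereas the paper instead cites {\cite[Lemma 3.2]{KKST3}} for $\mathcal H\big(\Om\setminus\bigcup_j V_j\big)=0$ and then implicitly converts via \eqref{eq:null sets of Hausdorff measure and capacity}. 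Both routes are valid and of comparable length. You are also slightly more careful than the paper's exposition in applying Proposition~\ref{prop:existence of strict quasicovering} to $U\cap E_n$ rather than to a level set of $u$ in $\Om$; this intersection is needed to guarantee that the resulting quasicovering members and cutoffs live inside $U$, and the paper passes over this step without comment.
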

\begin{proof}
Define $V_j:=\{x\in\Om:\,-j<u^{\wedge}(x)\le u^{\vee}(x)<j\}$ for each $j\in\N$.
By Proposition \ref{prop:quasisemicontinuity}
and the fact that the intersection of two $1$-quasiopen sets is $1$-quasiopen
(see e.g. \cite[Lemma 2.3]{Fug}),
each of these sets is $1$-quasiopen.
By \cite[Lemma 3.2]{KKST3} we know that $\mathcal H \big(\Om\setminus \bigcup_{j=1}^{\infty}V_j\big)=0$.
For each $j\in\N$, apply Proposition \ref{prop:existence of strict quasicovering}
to find a $1$-strict quasicovering
	$\{U_{j,k}\}_{k=1}^{\infty}$ of  $V_j$, and
	the associated Newton-Sobolev functions
	$\rho_{j,k}\in N_c^{1,1}(V_j)$.
	Then $\{U_{j,k}\}_{j,k=1}^{\infty}$ is a $1$-strict quasicovering of $U$
	with the associated Newton-Sobolev functions
	$\rho_{j,k}\in N_c^{1,1}(U)$, such that
	\[
	-\infty<-j\le \inf_{\supp \rho_{j,k}}u^{\wedge}\le \sup_{\supp \rho_{j,k}}u^{\vee}\le j<\infty
	\]
	for all $j,k\in\N$.
\end{proof}

By truncating if necessary, we can always assume that the Newton-Sobolev functions
take values between $0$ and $1$.

Now we construct the partition of unity.

\begin{proposition}\label{prop:locally finite partition of unity}
Let $U\subset X$ be $1$-quasiopen and let $\{U_j\}_{j=1}^{\infty}$ be a
$1$-strict quasicovering of $U$
with the associated nonnegative Newton-Sobolev functions $\rho_j\in N_c^{1,1}(U)$.
Then we can find functions $\eta_j\in N_c^{1,1}(U)$ such that $\eta_1=\rho_1$,
$0\le \eta_j\le \rho_j$ for all $j\in\N$,
$\sum_{j=1}^{\infty}\eta_j=1$ $1$-q.e. in $U$,
and $1$-q.e. $x\in U$ has a $1$-fine neighborhood where $\eta_j\neq 0$
for only finitely many $j\in\N$.
\end{proposition}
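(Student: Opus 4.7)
The plan is to use the classical telescoping construction for partitions of unity. After truncating if necessary, we may assume $0 \le \rho_j \le 1$ pointwise for all $j$; by the definition of a $1$-strict subset we have $\rho_j = 1$ pointwise on $U_j$. Define inductively
\[
\eta_1 := \rho_1, \qquad \eta_j := \rho_j \prod_{k=1}^{j-1}(1-\rho_k) \quad (j \ge 2).
\]
Since the factors $(1-\rho_k)$ take values in $[0,1]$, we immediately get $0 \le \eta_j \le \rho_j$ as required.

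Next I would verify $\eta_j \in N_c^{1,1}(U)$. Expanding the product multinomially,
\[
\eta_j = \sum_{S \subset \{1,\ldots,j-1\}} (-1)^{|S|} \rho_j \prod_{k \in S} \rho_k,
\]
so $\eta_j$ is a finite linear combination of products of bounded functions from $N_c^{1,1}(U)$. Each such product lies in $N^{1,1}(X)$ by the Leibniz rule (\cite[Theorem 2.15]{BB}), and its support is contained in $\supp \rho_j \Subset U$. Therefore $\eta_j \in N_c^{1,1}(U)$.

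The standard telescoping identity gives
\[
\sum_{j=1}^{N} \eta_j = 1 - \prod_{k=1}^{N}(1-\rho_k)
\]
pointwise on $X$. For any $x \in U_N$, the factor $(1-\rho_N)$ vanishes at $x$, so $\sum_{j=1}^{M}\eta_j(x) = 1$ for all $M \ge N$, and moreover $\eta_j(x)=0$ for every $j > N$ (each such $\eta_j$ carries the vanishing factor $(1-\rho_N)$). Thus on the $1$-finely open set $U_N$ we have both $\sum_{j}\eta_j = 1$ and $\eta_j \equiv 0$ for all $j > N$, so $U_N$ is a $1$-fine neighborhood witnessing local finiteness at each of its points. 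Since $\{U_j\}$ is a quasicovering, $\capa_1\bigl(U \setminus \bigcup_{j} U_j\bigr) = 0$, so every point of $\bigcup_j U_j$ — hence $1$-q.e.\ point of $U$ — is covered by some such $U_N$, yielding both $\sum_j \eta_j = 1$ and local finiteness $1$-q.e.\ in $U$.

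The only mild technical point is the Newton--Sobolev regularity of the products; this is handled cleanly by the multinomial expansion above combined with the Leibniz rule and the support containment in $\supp \rho_j$. The essential structural input — namely the pointwise identity $\rho_k = 1$ on the $1$-finely open set $U_k$, which is built into the definition of a $1$-strict subset — does all the work for both the partition-of-unity property and the $1$-fine local finiteness.
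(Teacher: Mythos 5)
Your proof is correct but goes through a genuinely different construction than the paper. The paper defines the $\eta_j$ by the recursive min/max (lattice) scheme $\eta_j := \min\bigl\{(1-\sum_{l<j}\eta_l)_+,\rho_j\bigr\}$, so that membership in $N_c^{1,1}(U)$ is immediate from the lattice property of Newton--Sobolev spaces (the paper's \eqref{eq:Newton Sobolev functions form lattice}), and the verification of $\sum\eta_j=1$ and local finiteness on each $U_k$ is a telescoping observation. You instead use the multiplicative telescoping $\eta_j := \rho_j\prod_{k<j}(1-\rho_k)$, which is the other classical route to a partition of unity. The pointwise facts (bounds, sum, local finiteness on $U_N$) come out the same either way, via the identity $\sum_{j\le N}\eta_j = 1-\prod_{k\le N}(1-\rho_k)$. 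Where your route does slightly more work is in establishing $\eta_j\in N_c^{1,1}(U)$: the factors $1-\rho_k$ are not themselves in $N^{1,1}(X)$ (they are not integrable), so you correctly route around this via the multinomial expansion into finite sums of products of the $\rho_k$, each handled by the Leibniz rule and supported in $\supp\rho_j\Subset U$; the paper's lattice argument sidesteps this entirely. Your construction also genuinely needs the truncation $\rho_j\le 1$ (to make $1-\rho_k\ge0$), whereas the paper's min/max scheme would work without it; but the paper explicitly allows this truncation, so that is fine. Both approaches deliver the same conclusion, including $\eta_1=\rho_1$ and $0\le\eta_j\le\rho_j$.
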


We describe the last two conditions by saying that $\{\eta_j\}_{j=1}^{\infty}$
is a $1$-finely locally finite partition of unity on $U$.

\begin{proof}
Define recursively for each $j\in\N$
\[
\eta_j:=\min \Bigg\{\Big(1-\sum_{l=1}^{j-1}\eta_{l}\Big)_+,\rho_j\Bigg\}.
\]
It is clear that $0\le \eta_j\le \rho_j$ for all $j\in\N$, and then by the lattice
property \eqref{eq:Newton Sobolev functions form lattice}
we get $\eta_j\in N^{1,1}_c(U)$.
Moreover, for $1$-q.e. $x\in U$ there is $k\in\N$ such that
$x\in U_k$, and thus
$\sum_{j=1}^{k}\eta_j=1$ in $U_k$
and $\eta_j=0$ in $U_k$ for all $j\ge k+1$.
Thus  $\eta_j\neq 0$
for only finitely many $j\in\N$ in a $1$-fine neighborhood of $x$, and
$\sum_{j=1}^{\infty}\eta_j=1$ $1$-q.e. in $U$.
\end{proof}

\begin{remark}\label{rmk:discrete convolutions}
In an open set $\Om$, we can pick a \emph{Whitney covering} consisting of balls
$B_j=B(x_j,r_j)$ that have radius
comparable to the distance to $X\setminus \Om$, and then
we can pick a Lipschitz partition of unity $\{\eta_j\}_{j=1}^{\infty}$
subordinate to this covering. Then the discrete convolution approximation
of a function $u\in\BV(\Om)$ is defined by
\[
v:=\sum_{j=1}^{\infty}u_{B_j}\eta_j\in \liploc(\Om).
\]
Using the Poincar\'e inequality \eqref{eq:poincare}, it can be shown that
$v$ has a $1$-weak upper gradient of the form
\[
C\sum_{j=1}^{\infty}\frac{\Vert Du\Vert(B(x_j,5\lambda r_j))}{\mu(B(x_j,5\lambda r_j))}\ch_{B(x_j,r_j)},
\]
see e.g. the proof of \cite[Proposition 4.1]{KKST3}.
However, when $\{\eta_j\}_{j=1}^{\infty}$ is instead a partition of unity
in a $1$-quasiopen set, the situation is more complicated, in particular
because the Poincar\'e inequality is more difficult to apply.
For this reason, using integral averages like $u_{B_j}$ appears to be
too crude a method, and instead we will make use of
the preliminary approximation results and
other machinery developed in Section \ref{sec:preliminary results}.
\end{remark}

The following theorem gives the discrete convolution technique in $1$-quasiopen sets.

\begin{theorem}\label{thm:discrete conv in quasiopen set}
Let $U\subset \Om\subset X$ be such that $U$ is $1$-quasiopen and
$\Om$ is open, and let $u\in L^1_{\loc}(\Om)$ with $\Vert Du\Vert(\Om)<\infty$.
Let $0<\eps<1$.
Then we find a partition of unity $\{\eta_j\in N_c^{1,1}(U)\}_{j=1}^{\infty}$
in $U$ and functions $u_j\in N^{1,1}(\{\eta_j>0\})$ such that the function
\begin{equation}\label{eq:definition of v}
v:=\sum_{j=1}^{\infty}\eta_j u_j
\end{equation}
satisfies
$\Vert v-u\Vert_{L^1(U)}<\eps$,
$\int_U g_v\,d\mu<\Vert Du\Vert(U)+\eps$,
and
\begin{equation}\label{eq:L infinity bound}
\sup_{U}|v-u^{\vee}|\le 9\sup_{U} (u^{\vee}-u^{\wedge})+\eps.
\end{equation}
Moreover, understanding $v-u$ to be zero extended to $X\setminus U$, we have
\begin{equation}\label{eq:glued discrete convolution}
\Vert D(v-u)\Vert(X)< 2\Vert Du\Vert(U)+\eps\quad\textrm{and}\quad
\Vert D(v-u)\Vert(X\setminus U)=0,
\end{equation}
and $|v-u|^{\vee}=0$ $\mathcal H$-a.e. in $X\setminus U$.
\end{theorem}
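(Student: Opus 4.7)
The plan is to construct $v$ by patching local approximants via a partition of unity adapted to the oscillation of $u$. First I would invoke Proposition~\ref{prop:existence of suitable strict quasicovering} to produce a $1$-strict quasicovering $\{U_j\}$ of $U$ with cutoffs $\rho_j\in N_c^{1,1}(U)$ on whose supports both $u^\wedge$ and $u^\vee$ are bounded, and then Proposition~\ref{prop:locally finite partition of unity} to obtain a $1$-finely locally finite partition of unity $\{\eta_j\}\subset N_c^{1,1}(U)$ with $\eta_j\le\rho_j$ and $\sum_j\eta_j=1$ $1$-q.e.\ in $U$. On each $1$-quasiopen set $\{\eta_j>0\}$ the oscillation $u^\vee-u^\wedge$ is bounded by some $\beta_j<\infty$ (taken uniform in $j$ if $\sup_U(u^\vee-u^\wedge)<\infty$), so Proposition~\ref{prop:approximation with L infinity bound} supplies a sequence $(u_{j,i})_i\subset N^{1,1}(\{\eta_j>0\})$ with $u_{j,i}\to u$ in $L^1$, $|u_{j,i}-u^\vee|\le 9\beta_j$, and $\int g_{u_{j,i}}\,d\mu\to\Vert Du\Vert(\{\eta_j>0\})$. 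Applying Proposition~\ref{prop:weak convergence with quasisemicontinuous test function} with the quasicontinuous (hence quasi upper semicontinuous) test function $\eta_j$, I would select an index $i_j$ so that $u_j:=u_{j,i_j}$ satisfies
\[
\int\eta_j g_{u_j}\,d\mu<\int\eta_j\,d\Vert Du\Vert+2^{-j}\eps\quad\text{and}\quad\Vert u_j-u\Vert_{L^1(\{\eta_j>0\})}<2^{-j}\eps.
\]

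Setting $v:=\sum_j\eta_j u_j$, the $1$-fine local finiteness of the partition makes $v$ well defined pointwise $1$-q.e.\ in $U$. The $L^1$ bound is immediate from $v-u=\sum_j\eta_j(u_j-u)$ $1$-q.e., $\eta_j\le 1$, and summing. For \eqref{eq:L infinity bound}, when $\sup_U(u^\vee-u^\wedge)<\infty$ the uniform choice of $\beta$ gives $|v-u^\vee|\le\sum_j\eta_j|u_j-u^\vee|\le 9\beta$ $1$-q.e.\ in $U$, and a harmless modification on a capacity-zero set (permitted by \eqref{eq:quasieverywhere equivalence classes}) upgrades this to the pointwise supremum bound; the case $\sup_U(u^\vee-u^\wedge)=\infty$ is vacuous.

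The main obstacle is the upper-gradient bound $\int_U g_v\,d\mu<\Vert Du\Vert(U)+\eps$. Lemma~\ref{lem:zero extension of Sobolev function} applied termwise yields $g_v\le\sum_j[\eta_j g_{u_j}+|u_j|g_{\eta_j}]$ on a $1$-fine neighborhood of each point where only finitely many $\eta_j$ are active, but the cross terms $|u_j|g_{\eta_j}$ require care. The key device is the identity $\sum_j\eta_j\equiv 1$ $1$-q.e.: locally one may replace $v$ by $v-c=\sum_j\eta_j(u_j-c)$ for any constant $c$, yielding $g_v\le\sum_j[\eta_j g_{u_j}+|u_j-c|g_{\eta_j}]$, and choosing $c$ equal to a local value of $u^\vee$ reduces each cross term to at most $9\beta_j g_{\eta_j}$. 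By subdividing each $U_j$ further via Proposition~\ref{prop:existence of strict quasicovering} and shrinking the $\rho_j$ appropriately, one may arrange $\Vert g_{\eta_j}\Vert_{L^1}\le 2^{-j}\eps/(1+\beta_j)$, absorbing the cross-term contribution into $\eps/2$. The principal term is then controlled by
\[
\sum_j\int\eta_j g_{u_j}\,d\mu<\sum_j\int\eta_j\,d\Vert Du\Vert+\tfrac{\eps}{2}=\int\Bigl(\sum_j\eta_j\Bigr)d\Vert Du\Vert+\tfrac{\eps}{2}=\Vert Du\Vert(U)+\tfrac{\eps}{2},
\]
where the last equality uses Lemma~\ref{lem:variation measure and capacity} together with \eqref{eq:null sets of Hausdorff measure and capacity} to disregard the $1$-q.e.\ exceptional set on which $\sum_j\eta_j$ might differ from $1$.

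For the glued statement \eqref{eq:glued discrete convolution}, since each $\eta_j u_j\in N_c^{1,1}(U)$ is zero on the open set $X\setminus\supp\eta_j\supset X\setminus U$, one has $\Vert D(\eta_j u_j)\Vert(X\setminus U)=0$; combining this over $j$ (using the $1$-fine local finiteness and Theorem~\ref{thm:lower semic in quasiopen sets}) shows that the zero extension of $v$ satisfies $\Vert Dv\Vert(X\setminus U)=0$, so $\Vert D(v-u)\Vert(X\setminus U)=0$, and then \eqref{eq:BV functions form vector space} combined with the upper-gradient bound gives $\Vert D(v-u)\Vert(X)\le\Vert Dv\Vert(U)+\Vert Du\Vert(U)<2\Vert Du\Vert(U)+\eps$. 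Finally, Theorem~\ref{thm:finely open is quasiopen and vice versa} identifies $U$ with a $1$-finely open set up to an $\mathcal H$-null set; at $\mathcal H$-a.e.\ $x\in X\setminus U$ the set $U$ is $1$-thin at $x$, so \eqref{eq:thinness and measure thinness} forces $U$ to have zero measure-density at $x$, which (as $v-u$ is zero-extended) gives $|v-u|^\vee(x)=0$.
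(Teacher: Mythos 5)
Your overall scaffolding (quasicovering adapted to $u$, partition of unity, local approximants from Proposition~\ref{prop:approximation with L infinity bound}, indices chosen so that Proposition~\ref{prop:weak convergence with quasisemicontinuous test function} controls $\int\eta_j g_{u_j}$) matches the paper's. However, the two most delicate conclusions are not reached by your argument.

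\textbf{The upper-gradient bound $\int_U g_v\,d\mu<\Vert Du\Vert(U)+\eps$.} Your strategy is to absorb the Leibniz cross terms $|u_j-c|g_{\eta_j}$ by choosing the covering so that $\Vert g_{\eta_j}\Vert_{L^1}$ is as small as you like. This cannot be arranged: the cutoffs must satisfy $\rho_j\in N_0^{1,1}(U)$ with $\rho_j=1$ on $U_j$, so $\int g_{\rho_j}\,d\mu\ge\rcapa_1(U_j,U)>0$, and refining the covering into smaller pieces only redistributes, not shrinks, the total capacity; for a Whitney-type cover of a Euclidean ball the sum $\sum_j\Vert g_{\eta_j}\Vert_{L^1}$ is in fact infinite. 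Separately, the constant $c$ in $v-c=\sum_j\eta_j(u_j-c)$ must be a single global constant for the partition-of-unity identity to apply, and there is no such $c$ with $|u_j-c|\lesssim\beta_j$ throughout $\supp\eta_j$ unless $u$ has globally bounded oscillation on $U$; choosing ``$c$ equal to a local value of $u^\vee$'' turns $c$ into a non-constant function and invalidates the identity. The paper avoids all of this by setting things up so that $\eta_1\equiv1$ on a $1$-finely open set $\fint\{\eta_1=1\}$ outside of which $\Vert Du\Vert$ is small, then splitting $\int_U g_v=\int_{\{\eta_1=1\}}g_{u_1}+\int_{U\setminus\{\eta_1=1\}}g_v$: the first term is controlled directly by the chosen $u_1$, and the second via Theorem~\ref{thm:BV with only abs cont part} through $C_0\Vert Dv\Vert(U\setminus\{\eta_1=1\})\le C_0(\Vert D(v-u)\Vert(W_1)+\Vert Du\Vert(U\setminus\{\eta_1=1\}))$, both small by construction. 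This careful tail estimate (your \eqref{eq:D v minus u in Wk} in the paper) is the missing idea.

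\textbf{The boundary condition $|v-u|^\vee=0$ $\mathcal H$-a.e.\ in $X\setminus U$.} You argue that $U$ has zero measure density at $\mathcal H$-a.e.\ point of $X\setminus U$, but this is false: Theorem~\ref{thm:finely open is quasiopen and vice versa} only says $U$ is $1$-finely open up to an $\mathcal H$-null set, and $1$-fine openness of $V$ controls thinness of the \emph{complement} at points \emph{inside} $V$, not thinness of $V$ at points outside it. A half-space is quasiopen and has density $1/2$ on an $\mathcal H$-positive boundary. The paper instead shows $h_l:=\sum_{j=1}^l\eta_j(u_j-u)$ is Cauchy in $\BV(X)$ (using the control $2\sum_{j\ge m}\int_U\eta_j\,d\Vert Du\Vert\to0$), so $h_l\to v-u$ in $\BV(X)$; since each $h_l$ has compact support in $U$ and hence $h_l^\wedge=h_l^\vee=0$ in $X\setminus U$, Lemma~\ref{lem:norm and pointwise convergence} gives $(v-u)^\wedge=(v-u)^\vee=0$ $\mathcal H$-a.e.\ there. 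You would need this type of BV-norm Cauchy argument rather than a density argument.

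In addition, your claim $\Vert D(v-u)\Vert(X)\le\Vert Dv\Vert(U)+\Vert Du\Vert(U)$ already presupposes that $\Vert D(v-u)\Vert$ is a finite measure and that $v\in N_{\loc}^{1,1}(U)$; the paper establishes both before making any such split, again via the BV-Cauchy argument and Theorem~\ref{thm:BV with only abs cont part}.
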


Note that we may have $\sup_{U} (u^{\vee}-u^{\wedge})=\infty$ and then
\eqref{eq:L infinity bound} is vacuous.
The conditions $\Vert D(v-u)\Vert(X\setminus U)=0$ and
$|v-u|^{\vee}=0$ $\mathcal H$-a.e. in $X\setminus U$ essentially say that
$v$ and $u$ have the same ``boundary values''.
This is the crucial new property that we obtain compared with
Proposition \ref{prop:approximation with L infinity bound},
because it says that $v$ can always be ``glued'' nicely with $u$,
in the sense of \eqref{eq:w defined by pasting} in the next section.

\begin{proof}
First we choose a suitable partition of unity in $U$.
By Proposition \ref{prop:existence of suitable strict quasicovering}
we find  a $1$-strict quasicovering
	$\{\widetilde{U}_j\}_{j=1}^{\infty}$ of $U$,
	and associated Newton-Sobolev functions $\{\widetilde{\rho}_j\in N_c^{1,1}(U)\}_{j=1}^{\infty}$, $0\le \widetilde{\rho}_j\le 1$,
	such that
	\[
	-\infty<\inf_{\supp \widetilde{\rho}_{j}}u^{\wedge}\le \sup_{\supp \widetilde{\rho}_{j}}u^{\vee}<\infty
	\]
	for all $j\in\N$.
	Since $\widetilde{\rho}_j=1$ in the $1$-finely open set $\widetilde{U}_j$ for each $j\in\N$,
	we have $\bigcup_{j=1}^k \widetilde{U}_j\subset \fint\left\{\max_{j\in\{1,\ldots k\}}\widetilde{\rho}_j=1\right\}$
	for each $k\in\N$.
	Now by the fact that $\{\widetilde{U}_j\}_{j=1}^{\infty}$ is a $1$-quasicovering of $U$ and
	by Lemma \ref{lem:variation measure and capacity},
	\begin{align*}
	\Vert Du\Vert\Bigg(U\setminus \fint\Big\{\max_{j\in\{1,\ldots k\}}\widetilde{\rho}_j=1\Big\}\Bigg)
	&\le \Vert Du\Vert\Bigg(U\setminus \bigcup_{j=1}^k \widetilde{U}_j\Bigg)\\
	&\overset{k\to\infty}{\to}
	\Vert Du\Vert\left(U\setminus \bigcup_{j=1}^{\infty}\widetilde{U}_j\right)=0;
	\end{align*}
	note that 1-quasiopen sets are easily seen to be $\Vert Du\Vert$-measurable
	by using Lemma \ref{lem:variation measure and capacity}, see \cite[Lemma 3.5]{L-Leib}.
	Thus for some $N\in\N$, we have
	\[
	\Vert Du\Vert\Bigg(U\setminus \fint\Big\{\max_{j\in\{1,\ldots N\}}\widetilde{\rho}_j=1\Big\}\Bigg)<\frac{\eps}{8C_0},
	\]
	where $C_0$ is the constant from Theorem \ref{thm:BV with only abs cont part}.
	Now define $U_1:=\bigcup_{l=1}^{N}\widetilde{U}_l$,
	$U_j:=\widetilde{U}_{N-1+j}$ for $j=2,3,\ldots$,
	$\rho_1:=\max_{l\in\{1,\ldots N\}}\widetilde{\rho}_l$, and
	$\rho_j:=\widetilde{\rho}_{N-1+j}$ for $j=2,3,\ldots$.
	Then $\{U_j\}_{j=1}^{\infty}$ is another $1$-strict quasicovering of $U$
	with associated Newton-Sobolev functions $\rho_j\in N_c^{1,1}(U)$,
	such that $0\le \rho_j\le 1$ and
	\[
	-\infty< \inf_{\supp \rho_{j}}u^{\wedge}\le \sup_{\supp \rho_{j}}u^{\vee}<\infty
	\]
	for all $j\in\N$.
	Moreover,
	\[
	\Vert Du\Vert(U\setminus \fint\{\rho_1=1\})<\frac{\eps}{8C_0}.
	\]
	
	Then by Proposition \ref{prop:locally finite partition of unity}
we find a nonnegative, $1$-finely locally finite partition of unity
$\{\eta_j\in N^{1,1}_c(U)\}_{j=1}^{\infty}$ in $U$
such that
	\begin{equation}\label{eq:finiteness condition of partition}
	-\infty< \inf_{\supp \eta_{j}}u^{\wedge}\le \sup_{\supp \eta_{j}}u^{\vee}<\infty
	\end{equation}
	for all $j\in\N$.
Moreover, $\eta_1=\rho_1$ and so
\begin{equation}\label{eq:choice of fint eta 1}
\Vert Du\Vert(U\setminus \fint\{\eta_1=1\})<\frac{\eps}{8C_0}.
\end{equation}
(In the rest of the proof, any other partition of unity
satisfying the properties mentioned in this paragraph would also work.)

For each $j\in\N$, since we have $\supp \eta_j\Subset \Om$,
there exists an open set $\Om_j$ with
$\supp \eta_j\subset \Om_j\Subset \Om$, and then $u\in \BV(\Om_j)$.
Since every function $\eta_j\in N_c^{1,1}(U)\subset N^{1,1}(X)$
is $1$-quasicontinuous, every set $\{\eta_j>0\}$ is $1$-quasiopen.
Now by Proposition \ref{prop:approximation with L infinity bound}
we find sequences $(u_{j,i})\subset N^{1,1}(\{\eta_j>0\})$ such that
$u_{j,i}\to u$ in $L^1(\{\eta_j>0\})$,
\begin{equation}\label{eq:uji and u L infinity bound}
\sup_{\{\eta_j>0\}} |u_{j,i}-u^{\vee}|\le 9\sup_{\{\eta_j>0\}} (u^{\vee}-u^{\wedge})+\eps<\infty\ \textrm{ (by \eqref{eq:finiteness condition of partition})}
\end{equation}
for all $i\in\N$, and
\[
\lim_{i\to\infty}\int_{\{\eta_j>0\}} g_{u_{j,i}}\,d\mu=\Vert Du\Vert(\{\eta_j>0\}),
\]
where each $g_{u_{j,i}}$
denotes (here and later) the minimal $1$-weak upper gradient of $u_{j,i}$ in $\{\eta_j>0\}$.
By passing to subsequences (not relabeled), we can also assume that
$u_{j,i}\to u$ a.e. in $\{\eta_j>0\}$.
For any set $W\subset X$, the function $\ch_{\overline{W}^1}$
is $1$-quasi upper semicontinuous by Theorem \ref{thm:finely open is quasiopen and vice versa} and Lemma \ref{lem:characteristic function of quasiopen set}, and then the function $\eta_j\ch_{\overline{W}^1}$ is also
$1$-quasi upper semicontinuous.
Thus by Proposition \ref{prop:weak convergence with quasisemicontinuous test function} we get
\begin{equation}\label{eq:uji to u weak convergence fact 2}
\limsup_{i\to\infty}\int_{\{\eta_j>0\}\cap \overline{W}^1}\eta_j g_{u_{j,i}}\,d\mu
\le \int_{\{\eta_j>0\}\cap \overline{W}^1}\eta_j\,d\Vert Du\Vert
\end{equation}
for each $j\in\N$.
By a suitable choice of indices $i(j)\in\N$, for each $j\in\N$ we have
with $u_j:=u_{j,i(j)}$ that $u_j\in N^{1,1}(\{\eta_j>0\})$,
\begin{equation}\label{eq:uj and u L infinity bounds}
\sup_{\{\eta_j>0\}} |u_{j}-u^{\vee}|\le 9\sup_{\{\eta_j>0\}} (u^{\vee}-u^{\wedge})
+\eps<\infty,
\end{equation}
	\begin{equation}\label{eq:choice of ujs and uniform convergence}
	\Vert u_{j}-u\Vert_{L^1(\{\eta_j>0\})}<2^{-j}\eps,\quad\textrm{and}\quad\int_{\{\eta_j>0\}} |u_j-u|g_{\eta_j}\,d\mu<\frac{2^{-j-2}\eps}{C_0},
	\end{equation}
	where the last inequality is achieved by Lebesgue's dominated convergence
	theorem, exploiting the boundedness of $\sup_{\{\eta_j>0\}} |u_{j,i}-u^{\vee}|$.
	Here $g_{\eta_j}$ is the minimal $1$-weak upper gradient of $\eta_j$
	in $X$.
	Define $W_0:=X$, $W_1:=X\setminus \{\eta_1=1\}$,
	and
	\[
	W_k:=X\setminus  \bigcup_{j=1}^k\supp \eta_j\quad\textrm{for }k=2,3,\ldots.
	\]
	By \eqref{eq:uji to u weak convergence fact 2} we can also assume for each $j\in\N$
	\begin{equation}\label{eq:choice of ujs and weak convergence 2}
	\int_{\{\eta_j>0\}\cap \overline{W_k}^1}\eta_j g_{u_{j}}\,d\mu
< \int_{\{\eta_j>0\}\cap \overline{W_k}^1}\eta_j\,d\Vert Du\Vert+\frac{2^{-j-2}\eps}{C_0}
	\end{equation}
	for the (finite number of) choices $k=0,\ldots,j$.
	Using Lebesgue's dominated convergence theorem as above, we have
	\begin{equation}\label{eq:uji to u fact}
	\lim_{i\to\infty}\int_{\{\eta_j>0\}}|u_j-u_{j,i}|g_{\eta_{j}}\,d\mu
= \int_{\{\eta_j>0\}}|u_j-u|g_{\eta_{j}}\,d\mu.
\end{equation}
By the definition of $v$ given in \eqref{eq:definition of v}
and by \eqref{eq:uj and u L infinity bounds},
\eqref{eq:choice of ujs and uniform convergence},
we clearly have
\[
\sup_{U}|v-u^{\vee}|\le 9\sup_{U} (u^{\vee}-u^{\wedge})+\eps
\]
and
\[
	\Vert v-u\Vert_{L^1(U)}
	=\Vert \sum_{j=1}^{\infty}\eta_j(u_{j}-u)\Vert_{L^1(U)}
	\le \sum_{j=1}^{\infty}\Vert u_{j}-u\Vert_{L^1(\{\eta_j>0\})}<\eps,
\]
as desired. Similarly,
\[
	\big\Vert \sum_{j=1}^{\infty}\eta_j |u_j-u|\big\Vert_{L^1(U)}
	\le \sum_{j=1}^{\infty}\Vert u_{j}-u\Vert_{L^1(\{\eta_j>0\})}<\eps,
\]
and so in particular $\sum_{j=1}^{\infty}\eta_j |u_j-u|\in L^1(U)$
and thus we have
\begin{equation}\label{eq:L1 convergence of partial sums}
\sum_{j=1}^{l}\eta_{j}(u_{j}-u)\to v-u\quad\textrm{in }L^1(U)\textrm{ as } l\to\infty
\end{equation}
by Lebesgue's dominated convergence theorem.

Moreover, for every $j\in\N$ we have
$\eta_j(u_j-u_{j,i})\to \eta_j(u_j-u)$ in $L^1(X)$.
By \eqref{eq:finiteness condition of partition},
\eqref{eq:uji and u L infinity bound}, and \eqref{eq:uj and u L infinity bounds},
we know that
\begin{equation}\label{eq:uji and uj are bounded}
u_{j,i}\textrm{ and }u_j\textrm{ are bounded in }\{\eta_j>0\}. 
\end{equation}
Thus by the lower semicontinuity of the total variation with respect
to $L^1$ convergence and by
Lemma \ref{lem:zero extension of Sobolev function},
we get for any open set $W\subset X$ (in fact any $1$-quasiopen set, see
comment below)
\begin{equation}\label{eq:sum eta j uj u}
	\begin{split}
&\Vert D\big(\eta_{j}(u_{j}-u)\big)\Vert(W)
\le \liminf_{i\to\infty}\int_W g_{\eta_{j}(u_j-u_{j,i})}\,d\mu\\
&\qquad\qquad  \le \liminf_{i\to\infty}\left(\int_{\{\eta_j>0\}}|u_j-u_{j,i}|g_{\eta_{j}}\,d\mu
+\int_{W\cap \{\eta_j>0\}}\eta_j(g_{u_j}+g_{u_{j,i}})\,d\mu\right)\\
&\qquad\qquad  \le \int_{\{\eta_j>0\}}|u_j-u|g_{\eta_{j}}\,d\mu
+\int_{W\cap \{\eta_j>0\}}\eta_jg_{u_j}\,d\mu
+\int_{\overline{W}^1\cap \{\eta_j>0\}}\eta_j\,d\Vert Du\Vert
\end{split}
	\end{equation}
by \eqref{eq:uji to u fact} and \eqref{eq:uji to u weak convergence fact 2}.
Note that with $W=X$, all the terms on the right-hand side are finite, and so
$\Vert D\big(\eta_{j}(u_{j}-u)\big)\Vert(X)<\infty$ and then
by Theorem \ref{thm:characterization of total variational} the above holds also for
$1$-quasiopen $W$.
For $k\in\N$,
note that
\begin{equation}\label{eq:eta j zero in Wk}
\eta_j=0\ \ \textrm{for }j=1,\ldots k-1\ \textrm{ in }W_k
\end{equation}
and that the set $W_1$ is $1$-quasiopen
by the quasicontinuity of $\eta_1$, while the sets $W_2,W_3,\ldots$ are open.
	Using \eqref{eq:BV functions form vector space}, we get for all
	$k,l\in\N$, $l\ge k$,
	\begin{equation}\label{eq:sum up to i is in BV X}
	\begin{split}
	&\Vert D\Big(\sum_{j=1}^{l}\eta_{j}(u_{j}-u)\Big)\Vert(W_k)
	\le \sum_{j=1}^{l}\Vert D(\eta_{j}(u_{j}-u))\Vert(W_k)\\
	&= \sum_{j=k}^{l}\Vert D(\eta_{j}(u_{j}-u))\Vert(W_k)\quad\textrm{by \eqref{eq:eta j zero in Wk}}\\
	&\le \sum_{j=k}^{l}\int_{\{\eta_j>0\}} |u_j-u|g_{\eta_j}\,d\mu
	+\sum_{j=k}^{l}\int_{W_k\cap \{\eta_j>0\}} \eta_j g_{u_j}\,d\mu\\
	&\qquad\qquad +\sum_{j=k}^{l}\int_{\overline{W_k}^1\cap \{\eta_j>0\}} \eta_j\,d\Vert Du\Vert\quad\textrm{by }\eqref{eq:sum eta j uj u}\\
	&< \frac{1}{C_0}\sum_{j=k}^{l} 2^{-j-2}\eps
	+2\sum_{j=k}^{l}\int_{\overline{W_k}^1\cap \{\eta_j>0\}} \eta_j\,d\Vert Du\Vert
	+\frac{1}{C_0}\sum_{j=k}^{l} 2^{-j-2}\eps\quad\textrm{by }\eqref{eq:choice of ujs and uniform convergence},\eqref{eq:choice of ujs and weak convergence 2}\\
	&= 2\sum_{j=k}^{l}\int_{\overline{W_k}^1\cap \{\eta_j>0\}} \eta_j\,d\Vert Du\Vert
	+\frac{2^{-k}\eps}{C_0}.
	\end{split}
	\end{equation}
	For $k=0$ (recall that $W_0=X$) and any $1\le m\le l$ we get by essentially the same calculation
	\begin{equation}\label{eq:sum from m to l is in BV X}
	\Vert D\Big(\sum_{j=m}^{l}\eta_{j}(u_{j}-u)\Big)\Vert(X)
	< 2\sum_{j=m}^{l}\int_{U} \eta_j\,d\Vert Du\Vert+2^{-m}\eps.
	\end{equation}
	By \eqref{eq:L1 convergence of partial sums} we had $\sum_{j=1}^{l}\eta_{j}(u_{j}-u)\to v-u$
	in $L^1(U)$, so understanding
	$v-u$ to be zero extended to $X\setminus U$, we now get
	by lower semicontinuity of the total variation with respect to $L^1$-convergence,
	\begin{align*}
	\Vert D(v-u)\Vert (X)
	\le \liminf_{l\to\infty}
	\Vert D\Big(\sum_{j=1}^{l}\eta_{j}(u_{j}-u)\Big)\Vert(X)
	&\le 2\sum_{j=1}^{\infty}\int_{U} \eta_j\,d\Vert Du\Vert+2^{-1}\eps\\
	&= 2\Vert Du\Vert(U)+2^{-1}\eps,
	\end{align*}
	proving the first inequality in \eqref{eq:glued discrete convolution}.
	Now by Theorem \ref{thm:lower semic in quasiopen sets} and
	\eqref{eq:sum up to i is in BV X},
	we have for each $k\in\N$
	\begin{equation}\label{eq:D v minus u in Wk}
	\begin{split}
	\Vert D(v-u)\Vert(W_k)
	&\le \liminf_{l\to\infty}\Vert D\Big(\sum_{j=1}^{l}\eta_{j}(u_{j}-u)\Big)\Vert(W_k)\\
	&\le 2\sum_{j=k}^{\infty}\int_{\overline{W_k}^1\cap \{\eta_j>0\}} \eta_j\,d\Vert Du\Vert
	+\frac{2^{-k}\eps}{C_0}\\
	&\le 2\sum_{j=k}^{\infty}\int_{U\setminus \fint\{\eta_1=1\}} \eta_j\,d\Vert Du\Vert
	+\frac{2^{-k}\eps}{C_0}.
	\end{split}
	\end{equation}
	Note that
	$\sum_{j=k}^{\infty}\eta_j\to 0$ $1$-q.e. in $U$ as $k\to\infty$,
	and then also $\Vert Du\Vert$-a.e. in $U$ by Lemma
	\ref{lem:variation measure and capacity}.
	Since $W_k\supset X\setminus U$ for all $k\in\N$,
	by Lebesgue's dominated
	convergence theorem we now get $\Vert D(v-u)\Vert(X\setminus U)=0$,
	proving the second inequality in \eqref{eq:glued discrete convolution}.
	
	Moreover, $h_l:=\sum_{j=1}^{l}\eta_{j}(u_{j}-u)$ is a Cauchy
	sequence in $\BV(X)$, since
	by \eqref{eq:sum from m to l is in BV X} we get for any $1\le m<l$
	\[
	\Vert D\Big(\sum_{j=m}^{l}\eta_{j}(u_{j}-u)\Big)\Vert(X)
	< 2\sum_{j=m}^{\infty}\int_{U} \eta_j\,d\Vert Du\Vert+2^{-m}\eps
	\to 0
	\]	
	as $m\to\infty$. Thus $h_l\to v-u$ in $\BV(X)$ (and not just in $L^1(X)$
	as noted in \eqref{eq:L1 convergence of partial sums}).
	Since each $h_l$ has compact support in $U$ and thus
	$h_l^{\wedge}=0=h_l^{\vee}$ in $X\setminus U$,
	by Lemma \ref{lem:norm and pointwise convergence}
	it follows that $(v-u)^{\wedge}(x)=0=(v-u)^{\vee}(x)$ for
	$\mathcal H$-a.e. $x\in X\setminus U$, and so also
	$|v-u|^{\vee}(x)=0$ for $\mathcal H$-a.e. $x\in X\setminus U$,
	as desired.
	
	Since the partition of unity
	$\{\eta_j\}_{j=1}^{\infty}$ is $1$-finely locally finite,
	the sets $V_k:=\fint \left\{\sum_{j=1}^k \eta_j=1\right\}$ cover $1$-quasi all of $U$.
	Moreover, $v\in N^{1,1}(V_k)$ for all $k\in\N$; this follows from
	the fact that $v$ in $V_k$ is the finite sum $\sum_{j=1}^k \eta_j u_j$,
	which is in $N^{1,1}(X)$ by Lemma \ref{lem:zero extension of Sobolev function} and
	\eqref{eq:uji and uj are bounded}.
	Let $A\subset U$ such that $\mu(A)=0$.
	By Theorem \ref{thm:finely open is quasiopen and vice versa},
	each $V_k$ is $1$-quasiopen and then by
	Lemma \ref{lem:absolute continuity}
	we have $\Vert Dv\Vert(A\cap V_k)=0$ for all $k\in\N$
	(note that $\Vert Dv\Vert(\Om)<\infty$ by the first inequality in
	\eqref{eq:glued discrete convolution}, understanding $v$ to be extended
	to $\Om\setminus U$ as $u$).
	Thus using also Lemma \ref{lem:variation measure and capacity},
	\[
	\Vert Dv\Vert(A)\le \Vert Dv\Vert\left(A\cap \bigcup_{k=1}^{\infty}V_k\right)+
	\Vert Dv\Vert\left(A\setminus \bigcup_{k=1}^{\infty}V_k\right)=0.
	\]
	Thus $\Vert Dv\Vert$ is absolutely continuous with respect to $\mu$ in $U$,
	and so by Theorem \ref{thm:BV with only abs cont part} we know that
a modification $\widehat{v}$ of $v$ in a $\mu$-negligible subset of $U$
satisfies 	
	$\widehat{v}\in N^{1,1}_{\loc}(U)$ such that
	$\int_H g_{\widehat{v}}\,d\mu\le C_0\Vert Dv\Vert(H)$ for every $\mu$-measurable $H\subset U$,
	where $g_{\widehat{v}}$ is the minimal $1$-weak upper gradient of $\widehat{v}$ in $U$.
	Now for each $k\in\N$, $v$ and $\widehat{v}$ are both $1$-quasicontinuous on
	the $1$-quasiopen set $V_k$ by \eqref{eq:quasicontinuous on quasiopen set},
	with $v=\widehat{v}$ a.e. in $V_k$,
	and so by \eqref{eq:ae to qe} we have in fact $v=\widehat{v}$ $1$-q.e. in $V_k$.
	Thus $v=\widehat{v}$ $1$-q.e. in $U$ and
	then by \eqref{eq:quasieverywhere equivalence classes}
	we can in fact let $\widehat{v}=v$ everywhere in $U$.
	
	By \cite[Proposition 3.5]{BB-OD} and
	\cite[Remark 3.5]{S2} we know that $g_{v,\{\eta_1>0\}}=g_{v}$
	a.e. in $\{\eta_1>0\}$, that is, it does not make a difference whether
	we consider the minimal $1$-weak upper gradient of $v$ in $U$ or
	in the smaller $1$-quasiopen set $\{\eta_1>0\}$. Then by \eqref{eq:upper gradient in coincidence set} we have $g_{u_1}=g_{v,\{\eta_1>0\}}=g_v$ a.e. in
	$\{\eta_1=1\}$.
	It follows that
	\begin{align*}
	\int_U g_v\,d\mu
	&\le \int_{\{\eta_1=1\}} g_v\,d\mu+\int_{U\setminus \{\eta_1=1\}} g_v\,d\mu\\
	&\le \int_{\{\eta_1=1\}} g_{u_1}\,d\mu
	+C_0\Vert Dv\Vert(U\cap W_{1})\\
	&\le \int_{\{\eta_1=1\}} g_{u_1}\,d\mu
	+C_0\Vert D(v-u)\Vert(W_{1})
	+C_0\Vert Du\Vert(U\cap W_{1})\\
	&\le \int_{\{\eta_1=1\}} g_{u_1}\,d\mu
	+3C_0\Vert Du\Vert(U\setminus \fint\{\eta_1=1\})+2^{-1}\eps\quad\textrm{by }\eqref{eq:D v minus u in Wk}\\
	&< \Vert Du\Vert(U)+\eps
	\end{align*}
	by \eqref{eq:choice of ujs and weak convergence 2}
	with the choices $W_k=X$ and $j=1$,
	and \eqref{eq:choice of fint eta 1}.
\end{proof}

\begin{remark}
Note that in the usual discrete convolution technique described in
Remark \ref{rmk:discrete convolutions}, we only get the estimate
$\int_{\Om}g_v\,d\mu\le C\Vert Du\Vert(\Om)$ for some constant $C\ge 1$
depending on the doubling and Poincar\'e constants, whereas in
Theorem \ref{thm:discrete conv in quasiopen set} we obtained
$\int_{\Om}g_v\,d\mu\le \Vert Du\Vert(\Om)+\eps$.
Thus our technique may seem to be an improvement on the usual discrete convolution
technique already in open sets, but in fact the (usual)
discrete convolutions have other good properties, in particular
the uniform integrability of the upper gradients in the case where
$\Vert Du\Vert$ is absolutely continuous with respect to $\mu$, see 
\cite[Lemma 6]{FHK}.
The uniform integrability seems more difficult to obtain in the quasiopen case,
but it is also perhaps not interesting for the following reason:
if $\Vert Du\Vert$ is absolutely continuous
in a $1$-quasiopen set $U$, then Theorem \ref{thm:BV with only abs cont part}
(whose proof is based on discrete convolutions) already tells that
$u\in N_{\loc}^{1,1}(U)$, and so it is not interesting to approximate
$u$ with functions $v\in N_{\loc}^{1,1}(U)$ given by Theorem
\ref{thm:discrete conv in quasiopen set}.
\end{remark}

\section{An approximation result}

In this section we apply the discrete convolution technique of the previous section
to prove a new approximation result for $\BV$ functions, given in
Theorem \ref{thm:main approximation theorem} of the introduction.
In this result we approximate a $\BV$ function in the $\BV$ and $L^{\infty}$
norms by $\BV$ functions whose jump sets are of finite Hausdorff measure.

First we note that without the requirement of approximation in the $L^{\infty}$ norm, 
the theorem could be
proved by using standard discrete convolutions.
Indeed, if $\Om\subset X$ is an open set and $u\in\BV(\Om)$,
we can take a suitable open set $W\subset \Om$ containing the part of the jump
set $S_u=\{u^{\vee}>u^{\wedge}\}=\{x\in\Om:\,u^{\vee}(x)>u^{\wedge}(x)\}$ where the size of the jump $u^{\vee}-u^{\wedge}$ is small, and then
we can take a discrete convolution of $u$
in $W$. By gluing this with the function $u$ in $\Om\setminus W$, we get the desired approximation; we omit the details but the essential aspects of this kind of technique
are given in \cite[Corollary 3.6]{LaSh}. However, the open set $W$ may
unavoidably contain also large jumps of $u$, and so it seems
impossible to obtain approximation in the $L^{\infty}$ norm with this method.
We sketch this problem in the following example.

\begin{example}\label{ex:dense jump set}
Let $X=\R^2$ (unweighted) and $\Om:=(0,1)\times (0,1)$. Define the strips
\[
A_j:=\{x=(x_1,x_2)\in\R^2:\, 2^{-j}\le x_1< 2^{-j+1},\, 0< x_2< 1\},\quad j\in\N,
\]
and the function
\[
v:=\sum_{j=1}^{\infty}j^{-1}\ch_{A_j}\in\BV(\Om).
\]
Also take a function $w\in\BV(\Om)$, $0\le w\le 1$, for which
\[
S_w=\{x\in\Om:\,w^{\vee}(x)-w^{\wedge}(x)= 1\}
\]
is dense in $\Om$; we do not present the construction of such a function
but it can be taken to be the characteristic function of a suitable (fat)
Cantor-type set.

Then let $u:=v+w\in\BV(\Om)$.
Denote by $\mathcal H^1$ the $1$-dimensional Hausdorff measure; note that this
is comparable to the codimension one Hausdorff measure.
Since $\mathcal H^{1}(S_v)=\infty$
and $\mathcal H^{1}(S_w)<\infty$
(otherwise $\Vert Dw\Vert(\Om)=\infty$ by \eqref{eq:variation measure decomposition}), clearly $\mathcal H^{1}(S_u)=\infty$.
Suppose we take an open set $W\subset\Om$ containing the
set $\{u^{\vee}-u^{\wedge}<\delta\}$ for some (small) $\delta>0$.
Then $W$ is nonempty and so contains a point $x\in \{w^{\vee}-w^{\wedge}= 1\}$,
and then clearly also $x\in \{u^{\vee}-u^{\wedge}\ge 1/2\}$.
If $h$ is a continuous function in $W$ (for example if $h$ is
a discrete convolution of $u$),
then it is straightforward to check that $\Vert h-u\Vert_{L^{\infty}(W)}\ge 1/4$
and thus we do not have approximation in the $L^{\infty}$ norm.
\end{example}

To prove the approximation result, we need the following lemma;
recall the definition of the measure-theoretic interior from
\eqref{eq:measure theoretic interior}.

\begin{lemma}\label{lem:measure theoretic interior of a quasiopen set}
Let $U\subset X$ be $1$-quasiopen. Then $\mathcal H(U\setminus I_U)=0$.
\end{lemma}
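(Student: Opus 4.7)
The plan is to reduce the claim to the $1$-finely open case via Theorem~\ref{thm:finely open is quasiopen and vice versa}, and then argue using \eqref{eq:thinness and measure thinness} that finely open sets are contained in their measure-theoretic interior.

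First, I would invoke Theorem~\ref{thm:finely open is quasiopen and vice versa} to write $U = V \cup N$, where $V\subset X$ is $1$-finely open and $N\subset X$ satisfies $\mathcal{H}(N)=0$. Since $\mathcal{H}(N)=0$ already, it suffices to show that $V\setminus I_U$ has $\mathcal{H}$-measure zero; in fact I will show the stronger statement $V\subset I_U$, so that $U\setminus I_U \subset N$.

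To show $V\subset I_U$, fix $x\in V$. Since $V$ is $1$-finely open, its complement $X\setminus V$ is $1$-thin at $x$. Applying \eqref{eq:thinness and measure thinness} to the set $A := X\setminus V$, we obtain
\[
\lim_{r\to 0}\frac{\mu(B(x,r)\setminus V)}{\mu(B(x,r))}=0.
\]
Because $V\subset U$, we have $B(x,r)\setminus U \subset B(x,r)\setminus V$, and therefore
\[
\lim_{r\to 0}\frac{\mu(B(x,r)\setminus U)}{\mu(B(x,r))}=0,
\]
which is precisely the condition $x\in I_U$ from \eqref{eq:measure theoretic interior}.

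Combining these steps yields $U\setminus I_U = (V\cup N)\setminus I_U \subset N$, and hence $\mathcal{H}(U\setminus I_U)\le \mathcal{H}(N)=0$, as desired. The only nontrivial input is the decomposition from Theorem~\ref{thm:finely open is quasiopen and vice versa} together with the measure-thinness implication \eqref{eq:thinness and measure thinness}; there is no real obstacle here, since both facts are already stated in the preliminaries.
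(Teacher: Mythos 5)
Your argument is correct and matches the paper's own proof essentially verbatim: both use Theorem~\ref{thm:finely open is quasiopen and vice versa} to extract a $1$-finely open $V\subset U$ with $\mathcal H(U\setminus V)=0$, then apply \eqref{eq:thinness and measure thinness} to get $V\subset I_V\subset I_U$.
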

\begin{proof}
By Theorem \ref{thm:finely open is quasiopen and vice versa}
we find a $1$-finely open set $V\subset U$ such that
$\mathcal H(U\setminus V)=0$. By \eqref{eq:thinness and measure thinness},
$V\subset I_V$,
and then obviously $V\subset I_U$.
\end{proof}

First we give the approximation result in the following form containing
more information than Theorem \ref{thm:main approximation theorem}.
The symbol $\Delta$ denotes the symmetric difference.

\begin{theorem}\label{thm:approximation}
Let $\Om\subset X$ be open, let $u\in\BV(\Om)$, and let $\eps,\delta>0$. Then
we find $w\in \BV(\Om)$ such that $\Vert w-u\Vert_{L^1(\Om)}<\eps$,
\[
\Vert D(w-u)\Vert(\Om)
< 2\Vert Du\Vert(\{0<u^{\vee}-u^{\wedge}< \delta\})+\eps,
\]
$\Vert w-u\Vert_{L^{\infty}(\Om)}\le 10\delta$,
$\mathcal H(S_{w}\Delta \{u^{\vee}-u^{\wedge}\ge \delta\})=0$,
and
\begin{equation}\label{eq:Lusin estimate 1}
\Vert Du\Vert(\{|w-u|^{\vee}\neq 0\})<\Vert Du\Vert(\{0<u^{\vee}-u^{\wedge}< \delta\})
+\eps
\end{equation}
and
\begin{equation}\label{eq:Lusin estimate 2}
\mu(\{|w-u|^{\vee}\neq 0\})<\eps.
\end{equation}
\end{theorem}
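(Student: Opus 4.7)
The plan is to apply Theorem \ref{thm:discrete conv in quasiopen set} to $u$ on a carefully chosen $1$-quasiopen set $U$ that is a tight neighborhood of the ``small jump set'' $S_u^\delta := \{0 < u^{\vee} - u^{\wedge} < \delta\}$ but disjoint from the ``large jump set'' $V := \{u^{\vee} - u^{\wedge} \ge \delta\}$, and then to set $w := u + (v - u)$ using the zero-extended difference provided by that theorem. This way $w$ equals the approximant $v$ on $U$ and equals $u$ on $\Om \setminus U \supset V$, so $w$ inherits the large jumps of $u$ exactly.

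By Proposition \ref{prop:quasisemicontinuity}, $u^{\vee} - u^{\wedge}$ is $1$-quasi upper semicontinuous, so the sub-level set $\{u^{\vee} - u^{\wedge} < \delta\}$ is $1$-quasiopen. Since $u = u^{\vee} = u^{\wedge}$ $\mu$-a.e.\ by Lebesgue's differentiation theorem, $\mu(S_u^\delta) = 0$; applying outer regularity of the Radon measures $\mu$ and $\Vert Du\Vert$ on $\Om$ (and intersecting two open neighborhoods if needed) pick an open $G \subset \Om$ with $S_u^\delta \subset G$, $\mu(G) < \eps/2$, and $\Vert Du\Vert(G) < \Vert Du\Vert(S_u^\delta) + \eps/8$. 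Then $U := \{u^{\vee} - u^{\wedge} < \delta\} \cap G$ is $1$-quasiopen (intersection of a $1$-quasiopen set with an open set), contains $S_u^\delta$, is disjoint from $V$, and satisfies $\Vert Du\Vert(U) < \Vert Du\Vert(S_u^\delta) + \eps/8$. Apply Theorem \ref{thm:discrete conv in quasiopen set} to $u$ on $U$ with parameter $\eps' := \min(\eps/2, \delta)$, obtaining $v$, and define $w := u + (v - u)$ on $\Om$.

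The numerical estimates are then immediate from the conclusions of Theorem \ref{thm:discrete conv in quasiopen set}. Namely, $\Vert w - u\Vert_{L^1(\Om)} = \Vert v - u\Vert_{L^1(U)} < \eps' \le \eps$; using $u = u^{\vee}$ $\mu$-a.e.\ and $\sup_U(u^{\vee} - u^{\wedge}) \le \delta$ gives $\Vert w - u\Vert_{L^{\infty}(\Om)} \le \sup_U |v - u^{\vee}| \le 9\delta + \eps' \le 10\delta$; and $\Vert D(w-u)\Vert(\Om) \le \Vert D(v-u)\Vert(X) < 2\Vert Du\Vert(U) + \eps' < 2\Vert Du\Vert(S_u^\delta) + \eps$. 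For \eqref{eq:Lusin estimate 1} and \eqref{eq:Lusin estimate 2}, the property $|v - u|^{\vee} = 0$ $\mathcal H$-a.e.\ on $X \setminus U$ (from Theorem \ref{thm:discrete conv in quasiopen set}) yields $\{|w - u|^{\vee} \ne 0\} \subset U \cup N$ with $\mathcal H(N) = 0$; by \eqref{eq:null sets of Hausdorff measure and capacity} and Lemma \ref{lem:variation measure and capacity}, $\Vert Du\Vert(N) = 0$, and the definition of $\mathcal H$ also forces $\mu(N) = 0$, so both estimates reduce to the bounds already established on $\Vert Du\Vert(U)$ and on $\mu(U) \le \mu(G)$.

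The main obstacle is the identification $\mathcal H(S_w \Delta V) = 0$. For $V \subset S_w$ modulo $\mathcal H$-null sets: $V \cap U = \emptyset$ gives $w = u$ on $V$, and the property $|v - u|^{\vee} = 0$ $\mathcal H$-a.e.\ on $X \setminus U$ forces $w^{\vee} = u^{\vee}$ and $w^{\wedge} = u^{\wedge}$ $\mathcal H$-a.e.\ on $\Om \setminus U$, so $V = S_u \cap V \subset S_w$ up to $\mathcal H$-null. For the reverse inclusion, Theorem \ref{thm:discrete conv in quasiopen set} produces $v \in N^{1,1}_{\loc}(U)$, which by \eqref{eq:quasicontinuous on quasiopen set} is $1$-quasicontinuous on $U$; by Theorem \ref{thm:fine continuity and quasicontinuity equivalence} it is therefore $1$-finely continuous $1$-q.e.\ on $U$, and \eqref{eq:thinness and measure thinness} then gives $v^{\vee} = v^{\wedge}$ at every such point, so $\mathcal H(S_w \cap U) = 0$ by \eqref{eq:null sets of Hausdorff measure and capacity}. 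On $\Om \setminus U$ we already have $S_w = S_u$ up to $\mathcal H$-null, and $S_u \setminus U \subset S_u \setminus S_u^\delta = V$ because $U \supset S_u^\delta$.
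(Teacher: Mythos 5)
Your argument follows the same route as the paper's proof, essentially step for step: choosing an open $G$ with small $\mu$-measure and $\Vert Du\Vert$-measure containing $\{0<u^{\vee}-u^{\wedge}<\delta\}$, intersecting it with the $1$-quasiopen set $\{u^{\vee}-u^{\wedge}<\delta\}$ to form $U$, applying Theorem \ref{thm:discrete conv in quasiopen set} on $U$, and gluing. The numerical estimates and the Lusin-type bounds are handled as in the paper, and the identification of $\{u^{\vee}-u^{\wedge}\ge\delta\}$ with $S_w\setminus U$ via $|v-u|^{\vee}=0$ $\mathcal H$-a.e.\ on $X\setminus U$ is also as in the paper.

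There is one small imprecision in the final step of showing $\mathcal H(S_w\cap U)=0$. You invoke $1$-fine continuity of $v$ $1$-q.e.\ in $U$ and then \eqref{eq:thinness and measure thinness}, concluding ``$v^{\vee}=v^{\wedge}$''. But what must be shown is $w^{\wedge}(x)=w^{\vee}(x)$, and $w^{\vee}(x)$ for $x\in U$ depends on $w$ in full metric balls around $x$, which also intersect $\Om\setminus U$ where $w=u$ has uncontrolled size. The $1$-fine continuity of $v$ on $U$ (with respect to the induced $1$-fine topology on $U$) controls only the part of the ball lying in some $1$-finely open $N'$ with $X\setminus N'$ thin at $x$; it does not by itself control the density of $X\setminus U$ at $x$. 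One needs additionally that $x\in I_U$ for $\mathcal H$-a.e.\ $x\in U$, which is exactly the paper's Lemma \ref{lem:measure theoretic interior of a quasiopen set} (a direct consequence of Theorem \ref{thm:finely open is quasiopen and vice versa} and \eqref{eq:thinness and measure thinness}). Once you add that explicit observation, the two proofs coincide.
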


\begin{proof}
Take an open set $W$ such that
$\{0<u^{\vee}-u^{\wedge}< \delta\}\subset W\subset \Om$,
\[
\Vert Du\Vert(W)<\Vert Du\Vert(\{0<u^{\vee}-u^{\wedge}< \delta\})+\eps/4,
\]
and $\mu(W)<\eps$; recall that by the decomposition
\eqref{eq:variation measure decomposition}, the jump set $S_u$ is $\sigma$-finite
with respect to $\mathcal H$ and thus $\mu(S_u)=0$.
By Proposition \ref{prop:quasisemicontinuity},
the set $\{u^{\vee}-u^{\wedge}< \delta\}$ is $1$-quasiopen,
and then so is $U:=W\cap \{u^{\vee}-u^{\wedge}< \delta\}$.
Moreover,
\begin{equation}\label{eq:total variation in U}
\Vert Du\Vert(U)\le \Vert Du\Vert(W)
<\Vert Du\Vert(\{0<u^{\vee}-u^{\wedge}< \delta\})+\eps/4.
\end{equation}
By Theorem \ref{thm:discrete conv in quasiopen set} we find
a function $v\in N^{1,1}(U)$
satisfying $\Vert v-u\Vert_{L^1(U)}<\eps$,
\[
\sup_{U}|v-u^{\vee}|\le 9\sup_{U} (u^{\vee}-u^{\wedge})+\delta\le 10\delta,
\]
and,  understanding $v-u$ to be zero extended to $X\setminus U$,
\begin{equation}\label{eq:D v minus u in proof}
\Vert D(v-u)\Vert(X)< 2\Vert Du\Vert(U)+\eps/2.
\end{equation}
By Lebesgue's differentiation theorem, now also
$\Vert v-u\Vert_{L^{\infty}(U)}\le 10\delta$.
Define
\begin{equation}\label{eq:w defined by pasting}
w:=
\begin{cases}
v & \textrm{in }U,\\
u & \textrm{in }\Om\setminus U.
\end{cases}
\end{equation}
Then $\Vert w-u\Vert_{L^1(\Om)}<\eps$
and $\Vert w-u\Vert_{L^{\infty}(\Om)}\le 10\delta$.
From \eqref{eq:D v minus u in proof}, \eqref{eq:total variation in U} we get
\[
\Vert D(w-u)\Vert(\Om)< 2\Vert Du\Vert(U)+\eps/2
<2\Vert Du\Vert(\{0<u^{\vee}-u^{\wedge}< \delta\})+\eps,
\]
as desired.
The function $v$ is $1$-quasicontinuous on the $1$-quasiopen set $U$
by \eqref{eq:quasicontinuous on quasiopen set}, and then also $1$-finely continuous $1$-q.e. in $U$
by Theorem \ref{thm:fine continuity and quasicontinuity equivalence}.
By Lemma \ref{lem:measure theoretic interior of a quasiopen set} we also have
$x\in I_U$ for $\mathcal H$-a.e. $x\in U$.
By \eqref{eq:null sets of Hausdorff measure and capacity},
$\mathcal H$-a.e. $x\in U$ satisfies both these properties, and then
by \eqref{eq:thinness and measure thinness} we find that
$w^{\wedge}(x)=w^{\vee}(x)$.
Thus $\mathcal H(S_w\cap U)=0$.

By definition of $U$ we have
$\{u^{\vee}-u^{\wedge}\ge \delta\}=S_u\setminus U$.
Since $|w-u|^{\vee}=0$ $\mathcal H$-a.e. in $\Om\setminus U$
by Theorem \ref{thm:discrete conv in quasiopen set},
we have $u^{\wedge}=w^{\wedge}$ and $u^{\vee}=w^{\vee}$
$\mathcal H$-a.e. in $\Om\setminus U$, and so the sets
$\{u^{\vee}-u^{\wedge}\ge \delta\}$ and $S_w\setminus U$
coincide outside a $\mathcal H$-negligible set.
In total, $\mathcal H(S_w\Delta \{u^{\vee}-u^{\wedge}\ge \delta\})=0$, as desired.

Since $|w-u|^{\vee}=0$ $\mathcal H$-a.e. in
$\Om\setminus U$, this holds also $\mu$-a.e. and $\Vert Du\Vert$-a.e. in $\Om\setminus U$
(recall \eqref{eq:null sets of Hausdorff measure and capacity} and Lemma
\ref{lem:variation measure and capacity}). Thus we get estimates \eqref{eq:Lusin estimate 1}
and \eqref{eq:Lusin estimate 2}.
\end{proof}

\begin{proof}[Proof of Theorem \ref{thm:main approximation theorem}]
For each $i\in\N$, choose the function $u_i$ to be
$w\in\BV(\Om)$ as given by
Theorem \ref{thm:approximation} with the choices $\eps=1/i$ and $\delta=1/i$.
Then $\Vert u_i-u\Vert_{L^1(\Om)}<1/i$ and
\[
\Vert D(u_i-u)\Vert(\Om)
< 2\Vert Du\Vert(\{0<u^{\vee}-u^{\wedge}< 1/i\})+1/i\to 0\quad\textrm{as }i\to\infty,
\]
and so $\Vert u_i-u\Vert_{\BV(\Om)}\to 0$ as $i\to\infty$.
Also, $\Vert u_i-u\Vert_{L^{\infty}(\Om)}\le 10/i\to 0$ as desired.
By the decomposition \eqref{eq:variation measure decomposition}
we find that $\mathcal H(\{u^{\vee}-u^{\wedge}\ge 1/i\})<\infty$ for all $i\in\N$
and so 
\[
\mathcal H(S_{u_i})=\mathcal H(\{u^{\vee}-u^{\wedge}\ge 1/i\})<\infty
\]
for all $i\in\N$.
\end{proof}

We observe that the proofs of Theorems \ref{thm:approximation} and
\ref{thm:main approximation theorem} were quite straightforward, because most of the hard
work was already done in the proof of the discrete convolution technique,
Theorem \ref{thm:discrete conv in quasiopen set}. Since
Theorem \ref{thm:discrete conv in quasiopen set} can be applied rather easily
in any $1$-quasiopen set, we expect that it will be useful also in the context
of other problems,
for example if one considers minimization problems in $1$-quasiopen domains.

We say that $u\in \BV(\Om)$ is a \emph{special function of bounded variation}, and
denote $u\in\SBV(\Om)$, if the Cantor part of the variation measure vanishes, i.e.
 $\Vert Du\Vert^c(\Om)=0$.
The following approximation result was proved (with some more details)
in \cite[Corollary 5.15]{L-Appr}.

\begin{theorem}
Let $\Om\subset X$ be open and let $u\in\BV(\Om)$. Then there exists a
sequence $(u_i)\subset \SBV(\Om)$
such that
\begin{itemize}
\item $u_i\to u$ in $L^1(\Om)$ and $\Vert Du_i\Vert(\Om)\to \Vert Du\Vert(\Om)$,
\item $\lim_{i\to\infty}\Vert D(u_i-u)\Vert(\Om)= 2\Vert Du\Vert^c(\Om)$,
\item $\limsup_{i\to\infty}\Vert Du\Vert(\{|u_i-u|^{\vee}\neq 0\})
\le \Vert Du\Vert^c(\Om)$ and\\
$\lim_{i\to\infty}\mu(\{|u_i-u|^{\vee}\neq 0\})=0$,
\item $\lim_{i\to\infty}\Vert u_i-u\Vert_{L^{\infty}(\Om)}=0$, and
\item $\mathcal H(S_{u_i}\setminus S_u)=0$ for all $i\in\N$.
\end{itemize}
\end{theorem}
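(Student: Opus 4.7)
The plan is to produce each $u_i$ by applying Theorem \ref{thm:discrete conv in quasiopen set} on a carefully chosen $1$-quasiopen set $U_i$ that simultaneously captures (a) the full Cantor support of $\Vert Du\Vert$ and (b) the part of the jump set where the jumps are vanishingly small, and then gluing the resulting Newton-Sobolev function to $u$ on the complement, exactly in the spirit of the construction of Theorem \ref{thm:approximation}.

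Concretely, I would fix $\delta_i \downarrow 0$ and pick a Borel $\mu$-null set $N$ on which $\Vert Du\Vert^c$ is concentrated. Since $\{0 < u^\vee - u^\wedge < \delta_i\} \subset S_u$ is also $\mu$-null, I can choose an open $W_i \supset N \cup \{0 < u^\vee - u^\wedge < \delta_i\}$ with $\mu(W_i) < 1/i$. Set $U_i := W_i \cap \{u^\vee - u^\wedge < \delta_i\}$, which is $1$-quasiopen by Proposition \ref{prop:quasisemicontinuity} and still contains all of $N$. Using absolute continuity of $\Vert Du\Vert^a$ with respect to $\mu$, the jump representation in \eqref{eq:variation measure decomposition}, and dominated convergence for the inner integral $\int_{u^\wedge}^{u^\vee}\theta_{\{u>t\}}\,dt$, one checks that $\Vert Du\Vert^a(U_i) \to 0$ and $\Vert Du\Vert^j(U_i) \to 0$, hence $\Vert Du\Vert(U_i) \to \Vert Du\Vert^c(\Om)$. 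Then apply Theorem \ref{thm:discrete conv in quasiopen set} on $U_i$ with $\eps = 1/i$ to get $v_i \in N^{1,1}(U_i)$ with $\sup_{U_i}|v_i - u^\vee| \le 10\delta_i$, $\Vert v_i - u\Vert_{L^1(U_i)} < 1/i$, $\int_{U_i} g_{v_i}\,d\mu < \Vert Du\Vert(U_i) + 1/i$, and $|v_i - u|^\vee = 0$ $\mathcal H$-a.e. in $X \setminus U_i$; then set $u_i := v_i$ on $U_i$ and $u_i := u$ on $\Om \setminus U_i$.

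That $u_i \in \SBV(\Om)$ follows because on $U_i$ the variation of $u_i$ coincides with that of the Newton-Sobolev function $v_i$ and is therefore absolutely continuous by Theorem \ref{thm:BV with only abs cont part}, while on $\Om \setminus U_i$ we have $u_i = u$ with $\Vert Du\Vert^c(\Om \setminus U_i) = 0$ by construction; any boundary contribution is $\mathcal H$-negligible thanks to $|v_i - u|^\vee = 0$ off $U_i$. The $L^1$ and $L^\infty$ convergences are immediate, $\mathcal H(S_{u_i} \setminus S_u) = 0$ follows from quasicontinuity of $v_i$ combined with Theorem \ref{thm:fine continuity and quasicontinuity equivalence} and Lemma \ref{lem:measure theoretic interior of a quasiopen set} (so $\mathcal H(S_{v_i}\cap U_i) = 0$), and the two Lusin-type bounds follow from $\mu(U_i) < 1/i$ together with the $\mathcal H$-a.e. vanishing of $|v_i - u|^\vee$ outside $U_i$.

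The most delicate step is the precise equality $\lim_i \Vert D(u_i - u)\Vert(\Om) = 2\Vert Du\Vert^c(\Om)$. The upper bound is a direct consequence of \eqref{eq:glued discrete convolution}: $\Vert D(u_i - u)\Vert(\Om) < 2\Vert Du\Vert(U_i) + 1/i \to 2\Vert Du\Vert^c(\Om)$, and the companion statement $\Vert Du_i\Vert(\Om) \to \Vert Du\Vert(\Om)$ then follows by combining this with Theorem \ref{thm:lower semic in quasiopen sets}. The matching lower bound $\liminf_i \Vert D(u_i - u)\Vert(\Om) \ge 2\Vert Du\Vert^c(\Om)$ is the hard part, and it is where I expect the main obstacle: one must exploit the decomposition \eqref{eq:variation measure decomposition} to see that the Cantor part of $\Vert D(u_i - u)\Vert$ equals $\Vert Du\Vert^c(U_i) = \Vert Du\Vert^c(\Om)$ (since $u_i$ carries no Cantor part), while the absolutely continuous part of $\Vert D(u_i - u)\Vert$ on $U_i$ is asymptotically given by the density $g_{v_i}$, whose integral tends to $\Vert Du\Vert^c(\Om)$ even as $\Vert Du\Vert^a(U_i) \to 0$. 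Making this mutual-singularity argument fully rigorous in the metric setting, where $\Vert Du\Vert$ is only a positive measure with no cancellation available, is the decisive technical challenge of the proof.
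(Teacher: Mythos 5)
The paper does not prove this theorem itself. It is stated immediately after the sentence ``The following approximation result was proved (with some more details) in \cite[Corollary 5.15]{L-Appr}'', so there is no proof in this paper to compare against; the paper then combines the quoted result with its own Theorem \ref{thm:approximation} to obtain Corollary \ref{cor:approximation combined}. Your proposal, by contrast, tries to rederive the quoted result using Theorem \ref{thm:discrete conv in quasiopen set}, which this paper presents as a new contribution. By construction, then, your route cannot coincide with the cited one (which predates the discrete-convolution-in-quasiopen-sets technique), even though the two share the same germ: work in the $1$-quasiopen set $\{u^\vee - u^\wedge < \delta\}$, insert a Newton--Sobolev replacement there, and glue.

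On the substance of your sketch: the outline is sound, but two things are worth flagging. First, the step you single out as the ``decisive technical challenge'', namely the lower bound $\liminf_i \Vert D(u_i-u)\Vert(\Om)\ge 2\Vert Du\Vert^c(\Om)$, actually goes through quite directly and does not require any delicate mutual-singularity argument involving cancellation. Pick a Borel set $N\subset U_i$ with $\mu(N)=0$, $N\cap S_u=\emptyset$, carrying $\Vert Du\Vert^c$. Since $u_i=v_i\in N^{1,1}(U_i)$ on $U_i$ and hence $\Vert Du_i\Vert\llcorner U_i$ is absolutely continuous (Lemma \ref{lem:absolute continuity}), we get $\Vert Du_i\Vert(N)=0$, and by the triangle inequality \eqref{eq:BV functions form vector space} applied to the Borel set $N$, $\Vert D(u_i-u)\Vert(N)\ge\Vert Du\Vert(N)-\Vert Du_i\Vert(N)=\Vert Du\Vert^c(\Om)$. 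On $U_i\setminus N$, one has $\Vert Du\Vert(U_i\setminus N)=\Vert Du\Vert(U_i)-\Vert Du\Vert^c(\Om)\to 0$ while $\Vert Du_i\Vert(U_i\setminus N)=\Vert Du_i\Vert(U_i)\to\Vert Du\Vert^c(\Om)$ (the latter from $\Vert Du_i\Vert(\Om)\to\Vert Du\Vert(\Om)$ and $\Vert Du_i\Vert\llcorner(\Om\setminus U_i)=\Vert Du\Vert\llcorner(\Om\setminus U_i)$, which follows from $\Vert D(v_i-u)\Vert(X\setminus U_i)=0$); the triangle inequality then gives $\Vert D(u_i-u)\Vert(U_i\setminus N)\ge\Vert Du_i\Vert(U_i\setminus N)-\Vert Du\Vert(U_i\setminus N)\to\Vert Du\Vert^c(\Om)$. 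Adding the two pieces gives the lower bound; no signed-measure cancellation is needed because we are bounding \emph{from below} on two disjoint sets. Second, in verifying $u_i\in\SBV(\Om)$ you should make explicit that $\Vert Du_i\Vert\llcorner(\Om\setminus U_i)=\Vert Du\Vert\llcorner(\Om\setminus U_i)$ (again from $\Vert D(v_i-u)\Vert(X\setminus U_i)=0$ and \eqref{eq:BV functions form vector space}) and that $\Vert Du\Vert^c(\Om\setminus U_i)=0$ because $N\subset U_i$; together with absolute continuity on $U_i$ this gives $\Vert Du_i\Vert^c(\Om)=0$. With these completions your argument is correct, but it does not reconstruct the reference's proof --- which, being prior to this paper, must have proceeded via the weaker Proposition \ref{prop:uniform approximation} or a different gluing mechanism rather than via Theorem \ref{thm:discrete conv in quasiopen set}.
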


Combining this with
Theorem \ref{thm:approximation}, we get the following corollary.

\begin{corollary}\label{cor:approximation combined}
Let $\Om\subset X$ be open and let $u\in\BV(\Om)$. Then there exists a
sequence $(u_i)\subset \SBV(\Om)$ with $\mathcal H(S_{u_i})<\infty$
for all $i\in\N$, such that
\begin{itemize}
\item $u_i\to u$ in $L^1(\Om)$ and $\Vert Du_i\Vert(\Om)\to \Vert Du\Vert(\Om)$,
\item $\lim_{i\to\infty}\Vert D(u_i-u)\Vert(\Om)= 2\Vert Du\Vert^c(\Om)$,
\item $\limsup_{i\to\infty}\Vert Du\Vert(\{|u_i-u|^{\vee}\neq 0\})
\le \Vert Du\Vert^c(\Om)$ and\\
$\lim_{i\to\infty}\mu(\{|u_i-u|^{\vee}\neq 0\})=0$, and
\item $\lim_{i\to\infty}\Vert u_i-u\Vert_{L^{\infty}(\Om)}=0$.
\end{itemize}
\end{corollary}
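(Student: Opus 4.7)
The plan is a diagonal argument combining Theorem \ref{thm:approximation} with the $\SBV$ approximation theorem just quoted. Given $u\in\BV(\Om)$ and sequences $\delta_j,\eps_j\to 0$, first apply Theorem \ref{thm:approximation} to $u$ to produce $\tilde u_j\in\BV(\Om)$ with $\mathcal{H}(S_{\tilde u_j})<\infty$, approximating $u$ in the $L^1$, $\BV$, and $L^\infty$ norms. Then, for each fixed $j$, apply the $\SBV$ theorem to $\tilde u_j$ to obtain a sequence $(v_{j,k})_k\subset\SBV(\Om)$; since that theorem gives $\mathcal{H}(S_{v_{j,k}}\setminus S_{\tilde u_j})=0$, we automatically have $\mathcal{H}(S_{v_{j,k}})<\infty$. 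Finally, choose $k(j)$ large enough that the convergences of the $\SBV$ theorem (at fixed $j$, as $k\to\infty$) hold up to error $1/j$, and set $u_j:=v_{j,k(j)}$.

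The key preliminary observation is that $\Vert Du\Vert(\{0<u^{\vee}-u^{\wedge}<\delta\})\to 0$ as $\delta\to 0$: this set lies in $S_u$, where $\Vert Du\Vert^a$ vanishes (as $\mu(S_u)=0$) and $\Vert Du\Vert^c$ vanishes (by mutual singularity with the jump part), so only the jump measure contributes, and its integrand from \eqref{eq:variation measure decomposition} is $\mathcal{H}$-integrable on $S_u$, making dominated convergence applicable on the decreasing sets $\{0<u^{\vee}-u^{\wedge}<\delta\}\downarrow\varnothing$. Inserted into Theorem \ref{thm:approximation} this gives $\Vert D(\tilde u_j-u)\Vert(\Om)\to 0$. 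Inspecting the proof of that theorem, $\tilde u_j$ coincides with a Newton--Sobolev function on a quasiopen set $U_j\subset\Om$ of small $\Vert Du\Vert$-mass, and with $u$ on $\Om\setminus U_j$; by the identity $\Vert D(\tilde u_j-u)\Vert(\Om\setminus U_j)=0$ (from Theorem \ref{thm:discrete conv in quasiopen set}) and Newton--Sobolev absolute continuity on $U_j$, one then reads off $\Vert D\tilde u_j\Vert^c(\Om)=\Vert Du\Vert^c(\Om\setminus U_j)\to\Vert Du\Vert^c(\Om)$.

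The four listed convergences then follow by triangle estimates. The $L^1$, $L^\infty$, and $\mu$-measure statements are immediate. For $\Vert Du_j\Vert(\Om)\to\Vert Du\Vert(\Om)$, combine $\Vert Dv_{j,k}\Vert\to\Vert D\tilde u_j\Vert$ with $\Vert D\tilde u_j\Vert\to\Vert Du\Vert$ (the latter from lower semicontinuity and the $\BV$-norm bound of Theorem \ref{thm:approximation}). For $\Vert D(u_j-u)\Vert\to 2\Vert Du\Vert^c(\Om)$, the upper bound uses $\Vert D(u_j-\tilde u_j)\Vert\to 2\Vert D\tilde u_j\Vert^c\to 2\Vert Du\Vert^c(\Om)$ plus $\Vert D(\tilde u_j-u)\Vert\to 0$; the matching lower bound is the reverse triangle inequality applied to the same two pieces.

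The subtlest estimate is $\limsup\Vert Du\Vert(\{|u_j-u|^{\vee}\neq 0\})\le\Vert Du\Vert^c(\Om)$: the $\SBV$ theorem only controls such a set by $\Vert D\tilde u_j\Vert$, not by $\Vert Du\Vert$. On $A:=\{|u_j-\tilde u_j|^{\vee}\neq 0\}$ I will use the subadditivity \eqref{eq:BV functions form vector space} in the form
\[
\Vert Du\Vert(A)\le \Vert D\tilde u_j\Vert(A)+\Vert D(u-\tilde u_j)\Vert(A),
\]
bound the first term by the $\SBV$ theorem's estimate ($\le\Vert D\tilde u_j\Vert^c(\Om)+1/j\to\Vert Du\Vert^c(\Om)$), the second by $\Vert D(u-\tilde u_j)\Vert(\Om)\to 0$, and handle the remaining piece $\{|\tilde u_j-u|^{\vee}\neq 0\}$ directly via Theorem \ref{thm:approximation}. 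This translation between the variation measures of $u$ and $\tilde u_j$ is the main technical point of the argument.
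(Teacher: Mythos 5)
Your proposal is correct and carries out what the paper's one-line assertion intends: apply Theorem \ref{thm:approximation} first to make jump sets finite, then apply the $\SBV$ theorem to each approximant and diagonalize. You have also chosen the right order: your preliminary observation that $\Vert Du\Vert(\{0<u^{\vee}-u^{\wedge}<\delta\})\to 0$ (continuity from above of the finite jump measure $\Vert Du\Vert\llcorner S_u$) is what forces $\Vert D(\tilde u_j-u)\Vert(\Om)\to 0$, and this in turn is what lets you transfer estimates between $\Vert Du\Vert$ and $\Vert D\tilde u_j\Vert$ in the third bullet; the reverse order (SBV first, then Theorem \ref{thm:approximation}) would leave the two variation measures far apart and make that bullet awkward. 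The only step I would flesh out is the identity $\Vert D\tilde u_j\Vert^c(\Om)=\Vert Du\Vert^c(\Om\setminus U_j)$: the ingredients you cite (equality of $\Vert D\tilde u_j\Vert$ and $\Vert Du\Vert$ as measures on $\Om\setminus U_j$, and $\Vert D\tilde u_j\Vert^s(U_j)=0$) give $\Vert D\tilde u_j\Vert^s(\Om)=\Vert Du\Vert^s(\Om\setminus U_j)$, but to extract the Cantor parts you must also check that $S_{\tilde u_j}$ and $S_u$ agree on $\Om\setminus U_j$ up to an $\mathcal H$-null set, which follows from $\mathcal H(S_{\tilde u_j}\Delta\{u^{\vee}-u^{\wedge}\ge\delta_j\})=0$ together with the fact that $\{u^{\vee}-u^{\wedge}\ge\delta_j\}=S_u\setminus U_j$.
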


The first condition in the corollary is often expressed by saying that the
$u_i$'s converge to $u$ in the \emph{strict sense}, whereas
the second condition
describes closeness in the $\BV$ norm.
The third condition describes approximation in the Lusin sense.
In all, the corollary states that we can always approximate a $\BV$ function in a rather
strong sense with functions that have neither a Cantor part of the variation measure
nor a large jump set.

\paragraph{Acknowledgments.}Part of the research for this paper was conducted
while the author was visiting Aalto University and the University of Cincinnati;
he wishes to thank Juha Kinnunen and Nageswari Shan\-mu\-ga\-lingam for the kind invitations.

\noindent Address:\\

\noindent Institut f\"ur Mathematik\\
Universit\"at Augsburg\\
Universit\"atsstr. 14\\
86159 Augsburg, Germany\\
E-mail: {\tt panu.lahti@math.uni-augsburg.de}

\end{document}